\newcommand{\diag}[1]{{\rm diag}\left(#1\right)}
\newcommand{\rs}[1]{{\mbox{\scriptsize \sc #1}}}
\newcommand{\vc}[1]{{\boldsymbol #1}} 
\newcommand{\vcn}[1]{{\bf #1}}
\newcommand{\sr}[1]{{\mathcal #1}}
\newcommand{\dd}[1]{\mathbb{#1}}
\newcommand{\ex}{\rs{ex}}
\newcommand{\re}{\rs{re}}
\newcommand{\br}[1]{\langle #1 \rangle}
\newcommand{\ol}{\overline}
\newcommand{\eq}[1]{(\ref{eq:#1})}
\newcommand{\lem}[1]{Lemma~\ref{lem:#1}}
\newcommand{\thr}[1]{Theorem~\ref{thr:#1}}
\newcommand{\rem}[1]{Remark~\ref{rem:#1}}
\newcommand{\sectn}[1]{Section~\ref{sec:#1}}
\newcommand{\lemt}[1]{\ref{lem:#1}}
\newcommand{\thrt}[1]{\ref{thr:#1}}
\newcommand{\sect}[1]{\ref{sec:#1}}
\newcommand{\pend}{\hfill \thicklines \framebox(6.6,6.6)[l]{}}
\newenvironment{proof*}[1]{\noindent {\sc  #1} \rm}{\pend}
\newtheorem{theorem}{Theorem}[section]
\newtheorem{lemma}{Lemma}[section]
\newtheorem{remark}{Remark}[section]
\newcommand{\setnewcounter} {
\setcounter{subsection}{0}
\setcounter{equation}{0}
\setcounter{conjecture}{0}
\setcounter{assumption}{0}
\setcounter{question}{0}
\setcounter{definition}{0}
\setcounter{theorem}{0}
\setcounter{corollary}{0}
\setcounter{lemma}{0}
\setcounter{proposition}{0}
\setcounter{remark}{0}
}
\begin{document}
 \title{Markov modulated fluid network process: Tail asymptotics of the stationary distribution}

%%%%%%%%%%%%%%
%   AUTHORS  %
%%%%%%%%%%%%%%
\author{Masakiyo Miyazawa\\ Tokyo University of Science,\\ Chinese University of Hong Kong, Shenzhen}
\date{September 25, 2020, 2nd revised}

\maketitle

\begin{quote}
{\it The author congratulates Peter Taylor on the occasion of his birthday
 and thanks for his great contributions in research and acadenic service.}
\end{quote}

\begin{abstract}
We consider a Markov modulated fluid network with a finite number of stations. We are interested in the tail asymptotics behavior of the stationary distribution of its buffer content process. Using two different approaches, we derive upper and lower bounds for the stationary tail decay rate in various directions. Both approaches are based on a well-known time-evolution formula of a Markov process, so-called Dynkin's formula, where a key ingredient is a suitable choice of test functions. Those results show how multidimensional tail asymptotics can be studied for the more than two-dimensional case, which is known as a hard problem.

\bigskip

\noindent {\bf Keywords}: \  Fluid queue, network, Markov modulated, tail asymptotic behavior, Dynkin's formula, stationary inequality, change of measure, fixed point equation. 
\end{abstract}

\section{Introduction} \setnewcounter
\label{sec:Introduction}

A Markov modulated fluid queue with single buffer has been widely studied as a basic model in application, particularly related to quasi-birth-and-death process, QBD in short. Contrasted with them, their networks are hard to study even for the tail asymptotics of the stationary distribution. Because of this, feedforward networks such as tandem queues have been mainly studied (e.g., see \cite{AdanMandScheTzen2009,DebiDiekRols2007,Kell2001}). We attack the tail asymptotic problem for a Markov modulated fluid network with general routing topology.

This network model has $d$ stations for an arbitrary integer $d \ge 2$. Each station has a buffer with infinite capacity which has an exogenous fluid input and releases fluid as long as the buffer is not empty or has input fluid flow. A constant fraction of released fluid from one station is transferred to the other stations. Thus, this network has a general but deterministic routing. We assume that exogenous fluid input rates and release rates for nonempty buffers, which is called potential release rates, are determined by the current state of a continuous time Markov chain with finitely many states. This Markov chain is called a background process. Here, if a buffer is empty, then its release rate is the minimum of the input flow rate and the potential release rate. We refer to this Markov modulated fluid network as MMFN model, and describe it by a Markov process composed of a multidimensional reflecting fluid process and a background Markov chain. We call this Markov process an MMFN process.

Other than the MMFN process, there are different types of multidimensional reflecting processes depending on a net flow process. Their typical models for discrete state spaces are reflecting multidimensional random walks on nonnegative orthants of integer valued vector spaces and generalized Jackson networks (GJN for short), while semimartingale reflecting Brownian motions (SRBM for short) are typical for continuous state spaces. A Markov modulated fluid network process is somehow between them, and belong to a class of reflecting processes generated by multidimensional Markov additive processes. In those stochastic processes, difficulty for asymptotic analysis arises from reflection at the boundaries of their state spaces. For the two-dimensional case, the problem is relatively easy to study because the reflecting boundary is one-dimensional, that is, composed of two half lines.

During the last two decades, the tail asymptotic problems have been well studied for two-dimensional reflecting processes such as a random walk, GJN and SRBM (see, e.g., the survey paper \cite{Miya2011} and references in it). For the two-dimensional reflecting random walk, we must mention the pioneering work of Borovkov and Mogulski (e.g., \cite{BoroMogu2001}). The large deviation technique has been also applied for multidimensional reflecting processes under the name of Markov processes with discontinuous statistics (e.g., see \cite{AtarDupu1999,DupuElliWeis1991}), but results are less explicit for the tail asymptotic problems (see more discussions at the end of \sectn{asymptotic 2}).

Markov modulation naturally arises in those reflecting processes, and has been studied by matrix analysis. QBD processes are typical for it. For the network model, the level of a QBD process is multidimensional, and analysis is getting harder. There are some studies in the two-dimensional case (e.g, see \cite{Miya2015a,OzawKoba2018}). Their results may be sharp, but are generally very complicated because of lots of matrix operations. This also may come from complicated reflection to be allowed in the QBD setting. So far, it seems not easy to extend their results to the more than two-dimensional case. Because of this, we will not take the QBD approach for the present MMFN process.

In this paper, we only consider the tail decay rate, that is, logarithmic asymptotics, and provide two messages. First, a Markov modulation does not complicate the tail asymptotic problem compared with a multidimensional reflecting process without Markov modulation. We will show that it can be handed in the exactly same way as the latter process. One may wonder where the effect of Markov modulation goes in such an analysis. It is handled by matrix analysis, particularly, Perron Frobenius eigenvalue and eigenvector. This is our trick. Secondly, the more than two-dimensional case would be nicely handled by the solution of a fixed point equation for multidimensional sets (see \lem{fixed 1}). This idea may be applied to other types of a multidimensional reflecting process as well.

To bring those two messages, we first construct a sample path of a fluid network process as the solution of a multidimensional reflecting process, and define a Markov process by adding a background Markov chain. We then derive a well-known time-evolution formula of a Markov process, so-called Dynkin's formula, using a suitable test function (see \cite{EthiKurt1986} for Dynkin's formula). This is the starting point of our analysis. We next assume the stability condition for the MMFN process, which is well known (e.g., see \cite{KellWhit1996}). To study the tail asymptotics of the stationary distribution of the buffer content, we employ two different approaches. As we detail below, both approaches have been studies for other types of multidimensional reflecting processes.

One is the stationary equation and inequalities in terms of a certain type of moment generating functions. We will use the Dynkin's formula for their derivations, but one can do it directly using the generator of a Markov process. A merit using the Dynkin's formulas is to simplify their derivations through flexibly choosing test functions. The same idea has been used for a two-dimensional SRBM in \cite{DaiMiya2011}, although there is no background process involved. The stationary equation of the SRBM is called a basic adjoint relation, BAR for short. Our stationary equation includes the background state distributions, but we will show that, if test functions are suitably chosen, then the stationary equation and inequalities can be used in a similar way to the BAR of the SRBM for the more than two-dimensional case (see \lem{marginal 2} and \thr{upper 1}).

Another approach is to change the probability measure by an exponential martingale also obtained from the Dynkin's formula. This approach has been used for the GJN in \cite{Miya2017a}. It presents the tail probability of the stationary distribution under a new measure, which makes the asymptotic analysis tractable because the Markov process may diverge under the new measure. In principle, this approach is applicable to any shape of the tail set unlike the BAR approach, which is based on moment generating functions and therefore the shape is limited to an envelope by hyperplanes. The cost for this wide applicability is demand for knowledge about the technic for martingale and change of measure. However, it turns out that the merit of this high cost is rather small in the present work (see \thr{lower 1}). We will see that both approaches produce almost the same results about upper bounds for the tail decay rates, while the change of measure approach has some merits for the lower bounds. Nevertheless, we think that this approach is worthwhile to study because it elucidates dynamics of the tail probability with help of the new measure.

This paper is made up by seven sections. In \sectn{MMFN}, the MMFN (Markov modulated fluid network) process is introduced, which is a piece-wise linear Markov process, and Dynkin's formula is derived using exponential type of test functions. In \sectn{main}, main results of this paper, Theorems \thrt{upper 1} and \thrt{lower 1}, are presented, which give bounds for the tail decay rates. In \sectn{upper}, the existence of the solution for the fixed point equation is proved using stationary inequalities, by which \thr{upper 1} is proved. In \sectn{lower}, \thr{lower 1} is proved, deriving the stationary tail probability under change of measure. Most of lemmas, not proved in the main text, are proved in \sectn{proof}. Finally, \sectn{concluding} remarks about the present and future studies.

\section{Markov modulated fluid network} \setnewcounter
\label{sec:MMFN}

In this section, we first formally introduce a Markov modulated fluid network, and show that its buffer content process is a reflecting process which is characterized by fluid flow equations through so-called Skorohod map. We then construct a Markov process composed of this reflecting fluid process and a background Markov chain. For this Markov process, we derive a Dynkin's formula, which is our main tool.

\subsection{Modeling assumptions and dynamics}
\label{sec:dynamics}

For a formal description of a fluid network model, we introduce modeling primitives and assumptions for them. This network has $d$ stations numbered as $1,2,\ldots, d$ for an integer $d \ge 2$. Let $\sr{K} = \{1,2,\ldots,d\}$. We assume that fluid input and potential release rates are controlled by a continuous time Markov chain  with a discrete state space $\sr{J}$, which is called a background process. Denote this Markov chain and its transition matrix by $\{J(t); t \ge 0\}$ and $Q \equiv \{q_{ij}; i,j \in \sr{J}\}$ respectively, where $q_{ii} = - \sum_{j \in \sr{J} \setminus \{i\}} q_{ij}$.  We assume
\begin{itemize}
\item [(\sect{MMFN}a)] $\sr{J} \equiv \{1,2,\ldots, m\}$ for a finite integer $m \ge 2$, and $Q$ is irreducible.
\end{itemize}

For background state $i \in \sr{J}$ and station $k \in \sr{K}$, denote the exogenous fluid input and potential release rates at station $k$ by $\lambda_{k}(i)$ and $\mu_{k}(i)$, respectively. If no fluid is buffered at station $k$, the actual release rate is the minimum of the total inflow rate and $\mu_{k}(i)$. In any case, the fraction $p_{k,\ell}$ of the released fluid at station $k$ goes to station $\ell$. Thus, the fraction of the fluid leaving the system is $p_{k,0} \equiv 1 - \sum_{\ell \in \sr{K}} p_{k,\ell}$, and the routing mechanism does not depend on the background process ${J(t)}$. We may assume that $p_{k,k} = 0$ for $k \in \sr{K}$, but it is not needed for our arguments, so we do not assume it.

Let $P = \{p_{k,\ell}; k,\ell \in \sr{K}\}$. Throughout the paper, in addition to (\sect{MMFN}a), we assume
\begin{itemize}
\item [(\sect{MMFN}b)] Let $\ol{P}$ be the $(d+1)$-dimensional matrix obtained by adding the $0$-th column $\{p_{k,0}; k=0,1,\ldots,d \}$ and the $0$-th row $\{p_{0,\ell}; \ell = 1,2,\ldots, d \}$ to $P$, where $p_{0,0} = 0$ and $p_{0,\ell} = \sum_{i \in \sr{J}} \lambda_{k}(i)/\sum_{i \in \sr{J}} \sum_{k \in \sr{K}} \lambda_{k}(i)$. Then, $\ol{P}$ is irreducible.
\end{itemize}
Note that condition (\sect{MMFN}b) means that the network can not be separated as disjoint two or more networks. It also implies that $\lim_{n \to \infty} P^{n} = 0$, which shows that the fluid eventually flows out if the potential release rates are sufficiently large. We refer to this fluid flow model as a Markov modulated fluid network, MMFN model for short.

In this paper, we use the following notations. The sets of all real numbers and of all nonnegative real numbers, respectively, are denoted by $\dd{R}$ and $\dd{R}_{+}$, and the complex number field is denoted by $\dd{C}$. For vector and matrix notations, the following conventions are used. Vectors are of column unless stated otherwise. The unit vector whose $k$-th entry is $1$ while all the other entries vanish is denoted by $\vcn{e}_{k}$. For vectors $\vc{x} \equiv (x_{1}, x_{2}, \ldots, x_{d})^{\rs{t}}, \vc{y} \equiv (y_{1}, y_{2}, \ldots, y_{d})^{\rs{t}}$ in $\dd{R}^{d}$ (or $\dd{C}^{d}$), where ``$\rs{t}$'' indicates the transpose of a vector, their inner product, $\sum_{i \in \sr{K}} x_{i} y_{i}$, is denoted by $\br{\vc{x}, \vc{y}}$. For $d$-dimensional vector $\vc{x} \in \dd{R}^{d}$ and set $A \subset \sr{K}$, $\vc{x}_{A}$ is the $|A|$-dimensional vector whose $i$-th entry is $x_{i}$ for $i \in A$, where $|A|$ is the number of elements of $A$. For $d$-dimensional square matrix $T \equiv \{t_{ij}; i,j \in \sr{K}\}$ and sets $A, B \subset \sr{K}$, $T_{A,B}$ is the $|A| \times |B|$-matrix whose $(i,j)$ entry is $t_{i,j}$ for $i \in A, j \in B$.

As for state space $S \equiv \dd{R}^{d}$, we define $S_{k}$ for $k \in \sr{K}$ and $S_{A}$ for $A \subset \sr{K}$ and $A \ne \emptyset$ as
\begin{align*}
  S_{k} = \{\vc{z} \in S; z_{k} = 0\}, \qquad S_{A} = \bigcup_{k \in A} S_{k}, \qquad \vc{z} \equiv (z_{1},\ldots,z_{d})^{\rs{t}}, 
\end{align*}
For $A = \emptyset$, we let $S_{\emptyset} = S_{+} \equiv \{\vc{z} \in S; z_{k} > 0, \forall k \in \sr{K}\}$, which is the inside of $S$, and let $\partial S = S_{\sr{K}}$, which is the boundary of $S$. For $A \ne \emptyset$, we refer to $S_{A}$ as a face $A$. This $S_{A}$ should not be confused with the set $\{\vc{z} \in S; z_{k} = 0, \forall k \in A, z_{k} > 0, \forall k \in \sr{K} \setminus A\}$, which is denoted by $S_{A}$ in \cite{Miya2011}.

\subsection{Buffer content and reflecting fluid network processes}
\label{sec:sample}

We first focus on the buffer content process of the MMFN model, which will be generated from a net flow process, introduced below. A sample path of the buffer content process is a function from $\dd{R}_{+}$ to $S$. In what follows, we assume that all the continuous time processes are right continuous and have left-hand limits. For them, the following sets of functions are convenient. Denote the sets of all continuous functions and all right-continuous functions with left-limits, respectively, by $C(S)$ and $D(S)$. For $f \in C(S)$ or $f \in D(S)$, define the uniform norm by
\begin{align*}
  \|f\|_{t} = \sup_{0 \le u \le t} |f(u)|, \quad t > 0, \qquad \|f\| = \sum_{n=1}^{\infty} 2^{-n} \|f\|_{n}.
\end{align*}
With this norm, $D(S)$ is a complete metric space but not separable, while $C(S)$ is a complete and separable metric space (see, e.g., Section 3 of \cite{Whit2002}). Similarly, we define function spaces $C(\dd{R}^{d})$ and $D(\dd{R}^{d})$ with uniform norm for $\dd{R}^{d}$ instead of $S$.

For $k \in \sr{K}$, let
\begin{align}
\label{eq:v k}
 & v_{k}(i) = \lambda_{k}(i) + \sum_{\ell \in \sr{K}} \mu_{\ell}(i) p_{\ell,k} - \mu_{k}(i), \qquad i \in \sr{J},\\
\label{eq:U k}
 & V_{k}(t) = \int_{0}^{t} v_{k}(J(s)) ds,
\end{align}
and denote the vector whose $k$-th entry is $V_{k}(t)$ by $\vc{V}(t)$. We call $\vc{V}(\cdot) \equiv \{\vc{V}(t); t \ge 0\}$ as a net flow process because it represents the difference of the inflow and outflow when all the buffers are not empty. Obviously, a sample path of $\vc{V}(\cdot)$ is in $C(\dd{R}^{d})$.

For each station $k \in \sr{K}$, let $Z_{k}(t)$ be the buffer content at time $t \ge 0$, and let $B_{k}(t)$ is the total amount of fluid released up to time $t$. Let $\vc{Z}(t)$ and $\vc{B}(t)$ be the $d$-dimensional vectors whose $k$-th entries are $Z_{k}(t)$ and $B_{k}(t)$ respectively. To formally define the sample path of $\{(\vc{Z}(t),\vc{B}(t)); t \ge 0\}$, we assume the following conditions. First, it must satisfy the flow balance equation:
\begin{align}
\label{eq:Z 1}
  Z_{k}(t) = Z_{k}(0) + \int_{0}^{t} \lambda_{k}(J(s)) ds + \sum_{\ell \in \sr{K}} \int_{0}^{t} p_{\ell k} B_{\ell}(ds) - B_{k}(t) \ge 0,
\end{align}
We next assume the following condition. 
\begin{align}
\label{eq:differentiable 1}
\begin{array}{ll}
 & \mbox{For all $k \in \sr{K}$, $B_{k}(t)$ has a derivative except for finitely many points }\\
 & \mbox{in each finite time interval.} 
\end{array}
\end{align}
By this and \eq{Z 1}, $Z_{k}(t)$ also has a derivative when $B_{k}(t)$ does so. Denote the derivatives of $Z_{k}(t)$ and $B_{k}(t)$ by $z_{k}(t)$ and $b_{k}(t)$ when they exist. Then, the following conditions should be satisfied from our modeling assumptions.
\begin{align}
\label{eq:b 1}
 & b_{k}(t) \le \mu_{k}(J(t)), \quad b_{k}(t) = \mu_{k}(J(t)) \mbox { if } Z_{k}(t)>0,\\
\label{eq:b 2}
 & z_{k}(t) = 0 \mbox { if } Z_{k}(t) = 0.
\end{align}

These assumptions yield the following fact, which is proved in \sectn{b 1}.
\begin{lemma}\rm
\label{lem:b 1}
If $\vc{Z}(\cdot)$ and $\vc{B}(\cdot)$ satisfy \eq{Z 1}--\eq{b 2}, then we have
\begin{align}
\label{eq:b 3}
  b_{k}(t) & = \mu_{k}(J(t))1(Z_{k}(t)>0) + (\mu_{k}(J(t)) \wedge a_{k}(t)) 1(Z_{k}(t)=0) \ge 0, \quad k \in \sr{K},
\end{align}
for $t$ not in the exceptional times, where $a_{k}(t) = \lambda_{k}(J(t)) + \sum_{\ell \in \sr{K}} b_{\ell}(t) p_{\ell,k}$, and $x \wedge y = \min(x,y)$ for $x,y \in \dd{R}$.
\end{lemma}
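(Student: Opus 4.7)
The plan is to fix a time $t$ outside the finite exceptional set from \eq{differentiable 1}, so that all derivatives $b_{k}(t)$ and $z_{k}(t)$ exist, and then differentiate the flow-balance equation \eq{Z 1} to obtain the pointwise identity
\begin{align*}
  z_{k}(t) = \lambda_{k}(J(t)) + \sum_{\ell \in \sr{K}} p_{\ell,k} b_{\ell}(t) - b_{k}(t) = a_{k}(t) - b_{k}(t), \qquad k \in \sr{K}.
\end{align*}
The argument then splits into two cases according to whether the buffer at station $k$ is empty.

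On $\{k : Z_{k}(t) > 0\}$ the equality half of \eq{b 1} directly gives $b_{k}(t) = \mu_{k}(J(t))$, matching the first summand of \eq{b 3}. On the empty-buffer set $E := \{k : Z_{k}(t) = 0\}$ condition \eq{b 2} yields $z_{k}(t) = 0$, so the identity above collapses to $b_{k}(t) = a_{k}(t)$; combining this with the inequality $b_{k}(t) \le \mu_{k}(J(t))$ from \eq{b 1} forces $a_{k}(t) \le \mu_{k}(J(t))$, whence $b_{k}(t) = \mu_{k}(J(t)) \wedge a_{k}(t)$, which is the second summand of \eq{b 3}. Together these two cases reproduce the announced formula.

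The non-negativity claim is the last and most delicate piece. Off $E$ it is immediate since $\mu_{k} \ge 0$. On $E$ the identity $b_{k} = a_{k}$ is self-referential because $a_{k}$ involves the other $b_{\ell}$'s, so pointwise reasoning is not enough and a global argument is required. After plugging in the already-identified values $b_{\ell}(t) = \mu_{\ell}(J(t))$ for $\ell \in E^{c}$, the remaining $|E|$ equations rearrange into the linear system
\begin{align*}
  (I - P_{E,E}^{\rs{t}}) \vc{b}_{E}(t) = \vc{\lambda}_{E}(J(t)) + P_{E^{c},E}^{\rs{t}} \vc{\mu}_{E^{c}}(J(t)),
\end{align*}
whose right-hand side is entry-wise non-negative. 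Condition (\sect{MMFN}b) implies $\lim_{n \to \infty} P^{n} = 0$, so the spectral radius of the principal submatrix $P_{E,E}$ is strictly less than one, and $(I - P_{E,E}^{\rs{t}})^{-1} = \sum_{n \ge 0} (P_{E,E}^{\rs{t}})^{n}$ is a non-negative matrix; therefore $\vc{b}_{E}(t) \ge 0$.

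The main obstacle is exactly this coupled step: establishing $b_{k}(t) \ge 0$ on $E$ cannot be done one index at a time, and the sub-stochastic structure of the routing matrix $P$ supplied by (\sect{MMFN}b) is the indispensable ingredient that turns the self-referential local relation into a solvable linear system with a non-negative solution.
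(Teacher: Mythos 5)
Your proof is correct and follows essentially the same route as the paper: differentiate \eq{Z 1}, invoke \eq{b 1} on $\{Z_k(t)>0\}$ and \eq{b 2} on $\{Z_k(t)=0\}$ to pin down $b_k(t)$ in each case, and then settle the coupled non-negativity on $E=\{k : Z_k(t)=0\}$ by exploiting the sub-stochastic structure of $P$. The only variation is in the final step: the paper extracts the inequality $\vc{b}_{E}(t) \ge P_{E,E}^{\rs{t}}\vc{b}_{E}(t)$ and iterates it, sending $(P_{E,E}^{\rs{t}})^n \vc{b}_E(t) \to 0$, whereas you write the exact linear system and apply $(I-P_{E,E}^{\rs{t}})^{-1} = \sum_{n\ge 0}(P_{E,E}^{\rs{t}})^n \ge 0$; both arguments rest on the same fact that (\sect{MMFN}b) forces $\rho(P_{E,E}) < 1$, and neither is materially simpler or more general than the other.
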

\begin{remark}\rm
\label{rem:b 1}
This lemma is intuitively clear if we interpret $a_{k}(t)$ as the total arrival rate and $b_{k}(t)$ as the departure rate at station $k$ at time $t$. However, we do need a proof because we do not assume that $b_{k}(t) \ge 0$ and any specific form for $b_{k}(t)$ for $Z_{k}(t)=0$, which are not immediate from \eq{Z 1} and \eq{b 1}.
\end{remark}

Define matrix $R$ as
\begin{align}
\label{eq:R 1}
  R = I - P^{\rs{t}},
\end{align}
where the $(k,\ell)$-entry of $R$ is denoted $r_{k,\ell}$, and define $\vc{Y}(\cdot) \equiv \{\vc{Y}(t); t \ge 0\}$ by
\begin{align}
\label{eq:Y 1}
  Y_{k}(t) =\int_{0}^{t} \mu_{k}(J(s)) ds - B_{k}(t), \qquad t \ge 0,
\end{align}
then then \eq{Z 1} can be written as
\begin{align}
\label{eq:Z 2}
  \vc{Z}(t) = \vc{Z}(0) + \vc{V}(t) + R \vc{Y}(t) \ge \vc{0}, \qquad t \ge 0,
\end{align}
and \lem{b 1} implies that $Y_{k}(t)$ increases only when $Z_{k}(t) = 0$, and therefore
\begin{align}
\label{eq:ZY 1}
 \int_{0}^{t} Z_{k}(s) Y_{k}(ds) = 0, \quad Y_{k}(0) = 0, \quad \mbox{$Y_{k}(t)$ is nondecreasing in $t$}, \quad k \in \sr{K}, t \ge 0.
\end{align}

Thus, we arrive at the standard formulation of the reflecting process with reflection matrix $R$. Since $R$ is an $\sr{M}$-matrix, for each sample path, the solution $\Psi(\vc{V}(\cdot)) \equiv (\vc{Z}(\cdot),\vc{Y}(\cdot)) \in C(S) \times C(S)$ of \eq{Z 2} and \eq{ZY 1} uniquely exists for $\vc{V}(\cdot) \in C(\dd{R}^{d})$ and is Lipschitz continuous  under the uniform norm generated from those on compact sets (see, e.g., Theorem 7.2 of \cite{ChenYao2001}). This solution $\Psi$ is called a Skorohod map. Thus, $(\vc{Z}(\cdot),\vc{B}(\cdot))$ in our original setting is given by $(\vc{Z}(\cdot),\vc{Y}(\cdot))$ if we define $\vc{B}(\cdot)$ through \eq{Y 1}. This proves the existence of $(\vc{Z}(\cdot),\vc{B}(\cdot))$. We finally show that  \eq{differentiable 1}, \eq{b 1} and \eq{b 2} are not extra conditions for the formulations by \eq{Z 2} and \eq{ZY 1}. First, \eq{differentiable 1} is immediate from Lipschitz continuity of $\Psi(\vc{V}(\cdot))$. Since $\vc{V}(\cdot)$ and $\vc{Z}(\cdot)$ are piecewise deterministic, where the latter is checked by inspecting the sample path of $\vc{Z}(\cdot)$, and \eq{Z 2} implies that
\begin{align*}
  \vc{Y}(t) = R^{-1}(\vc{Z}(t) - \vc{Z}(0) - \vc{V}(t)),
\end{align*}
\eq{b 1} and \eq{b 2} are also immediate from \eq{differentiable 1}, \eq{Y 1} and \eq{ZY 1}. Thus, we have obtained the following lemma.
\begin{lemma}\rm
\label{lem:reflection map 1}
$\vc{Z}(\cdot)$ and $\vc{B}(\cdot)$ satisfying \eq{Z 1}--\eq{b 2} uniquely exist, and are Lipschitz continuous on each time interval $[0,t]$ for $t > 0$.  
\end{lemma}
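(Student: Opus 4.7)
My plan is to reduce the existence and uniqueness statement to the classical Skorohod map theorem, and then verify that the auxiliary modeling conditions \eq{differentiable 1}--\eq{b 2} are automatically satisfied by the resulting trajectories.

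First I would check that $R = I - P^{\rs{t}}$ is an $\sr{M}$-matrix: since condition (\sect{MMFN}b) implies $\lim_{n\to\infty} P^{n} = 0$, the spectral radius of $P^{\rs{t}}$ is strictly less than one, so $R$ has nonnegative inverse $\sum_{n\ge 0}(P^{\rs{t}})^{n}$ and negative off-diagonal entries bounded by its positive diagonal entries. Given that the net flow process $\vc{V}(\cdot)$ has sample paths in $C(\dd{R}^{d})$ (it is the integral of a bounded step function of the background chain), I would then invoke Theorem 7.2 of \cite{ChenYao2001} to obtain a Skorohod map $\Psi$ on $C(\dd{R}^{d})$ producing a unique pair $(\vc{Z}(\cdot),\vc{Y}(\cdot))\in C(S)\times C(S)$ satisfying \eq{Z 2} and \eq{ZY 1}, and Lipschitz continuous with respect to the uniform norm on compact intervals.

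Next I would define $\vc{B}(\cdot)$ via \eq{Y 1}, i.e.\ $B_{k}(t) = \int_{0}^{t}\mu_{k}(J(s))\,ds - Y_{k}(t)$, and verify that this candidate together with $\vc{Z}(\cdot)$ satisfies the original system \eq{Z 1}--\eq{b 2}. Equation \eq{Z 1} follows by substituting the definition of $\vc{B}$ into \eq{Z 2} and using the identity $\vc{V}(t) = \int_{0}^{t}(\vc\lambda(J(s)) + (P^{\rs{t}}-I)\vc\mu(J(s)))\,ds$. For \eq{differentiable 1}, I would argue that since $J(\cdot)$ has finitely many jumps on each bounded interval, $\vc{V}(\cdot)$ is piecewise linear, and by Lipschitz continuity of $\Psi$ together with piecewise deterministic dynamics of $\vc{Z}(\cdot)$, both $\vc{Y}(\cdot)$ and hence $\vc{B}(\cdot)$ are piecewise linear, so their derivatives exist off a finite set on each bounded interval.

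The step I expect to be the main obstacle is checking \eq{b 1} and \eq{b 2}, because \eq{Z 2} and \eq{ZY 1} by themselves only tell us that $Y_{k}$ is nondecreasing and increases only when $Z_{k}=0$; they do not a priori forbid $b_{k}(t) > \mu_{k}(J(t))$ or $b_{k}(t)<0$. I would handle this by working on each maximal interval where $J$ is constant and no $Z_{k}$ changes sign. On such an interval, $\vc{Y}(\cdot)$ nondecreasing gives $b_{k}(t)\le \mu_{k}(J(t))$, which is \eq{b 1}. When $Z_{k}(t)>0$, the Skorohod complementarity \eq{ZY 1} forces $Y_{k}$ to be locally constant, hence $b_{k}(t) = \mu_{k}(J(t))$; when $Z_{k}(t)=0$, \eq{b 2} requires $z_{k}(t) = 0$, which follows by differentiating \eq{Z 2} at regular points and using that $Z_{k}$ remains at zero in a neighborhood. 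Combining these verifications with the uniqueness of the Skorohod solution yields uniqueness and Lipschitz continuity of $(\vc{Z}(\cdot),\vc{B}(\cdot))$, completing the proof.
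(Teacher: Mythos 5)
Your strategy matches the paper's proof exactly: invoke the Skorohod map theorem (Theorem 7.2 of \cite{ChenYao2001}) to get existence, uniqueness and Lipschitz continuity of the pair $(\vc{Z},\vc{Y})$ solving \eq{Z 2} and \eq{ZY 1}, then define $\vc{B}$ via \eq{Y 1} and verify that the original modeling conditions \eq{Z 1}--\eq{b 2} are satisfied by the resulting trajectories. Your extra detail on the $\sr{M}$-matrix property of $R$ and on the piecewise linearity needed for \eq{differentiable 1} is consistent with the paper.

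There is, however, a gap in the uniqueness direction. You argue that ``combining these verifications with the uniqueness of the Skorohod solution yields uniqueness,'' but the verifications you carry out only show that the Skorohod-derived trajectory \emph{is} a solution of \eq{Z 1}--\eq{b 2}; that gives existence. To conclude uniqueness you must also show the converse implication: that \emph{any} $(\vc{Z},\vc{B})$ satisfying \eq{Z 1}--\eq{b 2}, with $\vc{Y}$ defined by \eq{Y 1}, satisfies \eq{Z 2} and \eq{ZY 1}, so that it must coincide with the Skorohod solution. The paper handles this direction with its Lemma \lemt{b 1}, which establishes that $Y_{k}$ is nondecreasing and increases only when $Z_{k}=0$ (and, as a nontrivial by-product, that $b_{k}\ge 0$). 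In fact the forward implication you need follows quickly from \eq{b 1} and \eq{Y 1}, since $y_{k}(t) = \mu_{k}(J(t)) - b_{k}(t) \ge 0$ always and $=0$ whenever $Z_{k}(t)>0$, so it is not a large hole, but it should be stated rather than implied. A minor additional point: in your check of \eq{b 2}, the justification ``$Z_{k}$ remains at zero in a neighborhood'' is not always literally true; the cleaner argument is that $Z_{k}\ge 0$, $Z_{k}(t)=0$, and differentiability of $Z_{k}$ at $t$ force $z_{k}(t)=0$, since $t$ is an interior minimum.
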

\begin{remark}\rm
\label{rem:reflection map 1}
(a) This lemma holds as long as each sample path of $V_{k}(t)$ is differentiable except for finitely many points in each finite time interval for each $k \in \sr{K}$. Thus, Markov modulation is not essential here.\\
(b) $Y_{k}(t)$ has the derivative $y_{k}(t)$ corresponds to the fixed point equation for $\pi(y)$ of \cite[(7.20) on page 166]{ChenYao2001}. This suggests that there is a derivative version of the Skorohod solution for \eq{Z 2} and \eq{ZY 1} if $\vc{V}(\cdot)$ is differentiable.
\end{remark}

By \lem{reflection map 1}, $\vc{Z}(\cdot) \equiv \{\vc{Z}(t); t \ge 0\}$ and $\vc{B}(\cdot) \equiv \{\vc{B}(t); t \ge 0\}$ are formally defined as the solution of \eq{Z 1}--\eq{b 2}, where $\{\lambda_{k}(J(t)), \mu_{k}(J(t)); k \in \sr{K}\}$ and the routing matrix $P$ are given as modeling primitives of the MMFN model.

\subsection{Markov process for the MMFN model and stability}
\label{sec:stability}

We now describe the reflecting fluid network satisfying the conditions (\sect{MMFN}a) and (\sect{MMFN}b) by a Markov process. For this, we take the joint process $\vc{X}(t) \equiv (\vc{Z}(\cdot),J(\cdot))$ defined on an appropriately chosen probability space $(\Omega,\sr{F},\dd{P})$, where $J(\cdot) \equiv \{J(t); t \ge 0\}$ is a continuous time Markov chain with state space $\sr{J} \equiv \{1,2,\ldots,m\}$ and transition rate matrix $Q$. $J(\cdot)$ is called a background process. Then, it is easy to see that $\vc{X}(\cdot) \equiv \{\vc{X}(t); t \ge 0\}$ is a continuous time Markov process with state space $S \times \sr{J}$ adapted to filtration $\dd{F} \equiv \{\sr{F}_{t}; t \ge 0\}$, where $\sr{F}_{t} = \sigma(\{\vc{X}(u); 0 \le u \le t\})$, and $\sr{F}$ is chosen so that it includes $\cup_{t \ge 0} \sr{F}_{t}$. In what follows, we omit $\dd{F}$ as long as it is easily identified in the context. Since this Markov process describes the Markov modulated fluid network, we refer to it as a Markov modulated fluid network process, MMFN process for short. The modeling parameters of this $d$-dimensional reflecting process are given by $\{v_{k}(j); k \in \sr{K}, j \in \sr{J}\}$, $d$-dimensional reflecting matrix $R$ and $m$-dimensional transition matrix $Q$.

In this subsection, we consider the stability of the Markov process $\vc{X}(\cdot)$, that is, the existence of its stationary distribution. Obviously, the mean drift of the net flow process $\vc{V}(\cdot)$ is a key for this. So, we introduce the following notation. By (\sect{MMFN}a), $J(t)$ has a unique stationary distribution, which is denoted by $\pi \equiv \{\pi(i); i \in \sr{J}\}$. Let
\begin{align}
\label{eq:stability}
  \ol{v}_{k} = \sum_{i \in \sr{J}} v_{k}(i) \pi(i), \qquad k \in \sr{K}.
\end{align}
$\ol{v}_{k}$ represents the mean drift in the $k$-th coordinate direction. Let $\ol{\vc{v}} = (\ol{v}_{1}, \ldots, \ol{v}_{d})^{\rs{T}}$. 

We first refer to the stability of a stochastic fluid network obtained by \citet{KellWhit1996}. They consider a general net flow process $\vc{V}(\cdot)$ in our notation but all the other conditions are the same.
\begin{lemma}[Theorem 10 and its corollary of \citet{KellWhit1996}]
\label{lem:stability 1}
For the reflecting process $\vc{Z}(\cdot)$ characterized by \eq{Z 2} and \eq{ZY 1}, if a net flow process $\vc{V}(\cdot)$ has stationary increments and if
\begin{align}
\label{eq:stability 1}
 & R^{-1} \ol{\vc{v}} < \vc{0},
\end{align}
then $\vc{Z}(\cdot)$ is tight, that is, for any $\varepsilon > 0$, there exists a compact set $C$ of $S$ such that $\dd{P}(\vc{Z}(t) \in C) \ge 1 - \varepsilon$ for all $t \ge 0$, for any proper distribution of $\vc{Z}(0)$. In particular, if $\vc{Z}(0)=\vc{0}$, then $\vc{Z}(t)$ converges in distribution to a proper limit as $t \to \infty$.
\end{lemma}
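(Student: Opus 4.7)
My plan is to apply the Foster--Lyapunov stability criterion to the Markov process $\vc{X}(\cdot) = (\vc{Z}(\cdot), J(\cdot))$, using a linear Lyapunov function whose coefficients are built from the $\sr{M}$-matrix structure of $R = I - P^{\rs{t}}$.

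First, since $\lim_{n} P^{n} = 0$ (as observed after (\sect{MMFN}b)), the matrix $R^{-1} = \sum_{n \ge 0} (P^{\rs{t}})^{n}$ has nonnegative entries, and the vector
\begin{equation*}
\vc{u} := (R^{\rs{t}})^{-1}\vc{1} = \sum_{n \ge 0} P^{n}\vc{1}
\end{equation*}
is strictly positive (since $(P^{0}\vc{1})_{k} = 1$) and satisfies $R^{\rs{t}}\vc{u} = \vc{1}$. The stability assumption \eq{stability 1} gives $\langle \vc{u}, \ol{\vc{v}}\rangle = \langle \vc{1}, R^{-1}\ol{\vc{v}}\rangle < 0$. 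Pairing \eq{Z 2} with $\vc{u}$ then collapses the multidimensional reflection into a scalar identity
\begin{equation*}
\langle \vc{u}, \vc{Z}(t)\rangle = \langle \vc{u}, \vc{Z}(0)\rangle + \langle \vc{u}, \vc{V}(t)\rangle + \langle \vc{1}, \vc{Y}(t)\rangle,
\end{equation*}
in which every face of $\partial S$ contributes the same reflection weight.

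Next I would verify a Foster drift inequality for $L(\vc{z}, j) := \langle \vc{u}, \vc{z}\rangle$. Taking expectations in the identity above from initial state $(\vc{z}, j)$ and using stationary increments of $\vc{V}$ together with the ergodic theorem for the irreducible finite-state $J(\cdot)$ (so that $\dd{E}_{j}[\vc{V}(t_{0})] = t_{0}\ol{\vc{v}} + O(1)$ uniformly in $j$ for fixed $t_{0}$), one obtains
\begin{equation*}
\dd{E}_{(\vc{z},j)}[L(\vc{X}(t_{0}))] - L(\vc{z}) = t_{0}\langle \vc{u}, \ol{\vc{v}}\rangle + O(1) + \dd{E}_{(\vc{z},j)}[\langle \vc{1}, \vc{Y}(t_{0})\rangle].
\end{equation*}
Boundedness of the fluid rates (because $J$ is finite-state) yields $M > 0$ such that if $\min_{k} z_{k} > M$ then $\vc{Z}(s) > \vc{0}$ on $[0,t_{0}]$ and $\vc{Y}(t_{0}) = \vc{0}$; the drift is then $t_{0}\langle \vc{u}, \ol{\vc{v}}\rangle + O(1)$, which is strictly negative once $t_{0}$ is chosen large. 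In the ``boundary corridor'' where some $z_{k}$ is small but $|\vc{z}|$ is large, one bounds $\dd{E}_{(\vc{z},j)}[\langle \vc{1}, \vc{Y}(t_{0})\rangle]$ uniformly via the Lipschitz property of the Skorohod map and boundedness of $\vc{V}$ on $[0,t_{0}]$. Together these give a Foster drift condition outside a compact petite set, and by Meyn--Tweedie the Markov process $\vc{X}(\cdot)$ is positive Harris recurrent, so $\vc{Z}(\cdot)$ is tight for arbitrary initial distribution.

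For the second assertion, pathwise monotonicity of the Skorohod map in its driver under $\sr{M}$-matrix reflection, combined with the stationary increments of $\vc{V}$, makes $\{\vc{Z}(t)\}_{t \ge 0}$ started from $\vc{0}$ stochastically monotone in $t$ (since $\vc{Z}(t_{2})$ dominates the shifted reflection restarted from $\vc{Z}(t_{1}) \ge \vc{0}$), and together with the tightness already established forces weak convergence to the unique stationary distribution of $(\vc{Z}(\cdot), J(\cdot))$, whose uniqueness is guaranteed by (\sect{MMFN}a)--(\sect{MMFN}b). The main obstacle in the plan is the drift verification in the boundary corridor: controlling $\dd{E}_{(\vc{z},j)}[\langle \vc{1}, \vc{Y}(t_{0})\rangle]$ uniformly when some components of $\vc{z}$ are small requires a careful use of the Skorohod map's Lipschitz bound, or equivalently a Harrison--Reiman supremum representation of $\vc{Y}$ that separates the contributions from each face.
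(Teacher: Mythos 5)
Note first that the paper does not prove this lemma; it is quoted directly from Theorem 10 and its corollary of Kella and Whitt (1996), so there is no ``paper's own proof'' for your argument to match. What follows therefore compares your route to theirs and assesses it on its own terms.

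Your Foster--Lyapunov argument with the linear test function $L(\vc{z}) = \br{\vc{u},\vc{z}}$, $\vc{u} = (R^{\rs{t}})^{-1}\vc{1}$, has a genuine gap precisely where you flag it, and the gap is not a routine technicality. In the boundary corridor the only uniform bound on the regulator available from bounded rates or the Skorohod--Lipschitz estimate is $\dd{E}_{(\vc{z},j)}[\br{\vc{1},\vc{Y}(t_0)}] = O(t_0)$, since each $Y_k(t_0) \le t_0 \max_i \mu_k(i)$. But the favourable drift $t_0\br{\vc{u},\ol{\vc{v}}}$ has the same $t_0$-scaling, so choosing $t_0$ large does not make the net drift negative; the Lipschitz bound buys you nothing here. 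The linear test function $\br{\vc{u},\vc{z}}$ cannot see which faces the path stays away from, and a workable Lyapunov proof needs a test function that suppresses the regulator contribution from those faces (for instance a piecewise-linear/max construction such as $\max_k u_k z_k$, or a fluid-limit criterion in the spirit of Dai), which is a substantially different argument from the one sketched. There is also a scope mismatch: the lemma is stated for an arbitrary driver $\vc{V}(\cdot)$ with stationary increments, whereas Foster--Lyapunov via Meyn--Tweedie presupposes that $(\vc{Z},J)$ is Markov, so your first half covers only the Markov-modulated special case.

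By contrast, the Loynes-type monotone coupling you invoke for the second assertion is in fact the backbone of the Kella--Whitt proof of the \emph{entire} statement: extend $\vc{V}$ to the whole line by stationary increments, use monotonicity of the $\sr{M}$-matrix Skorohod map in the initial condition to obtain a pathwise-increasing Loynes limit, and show that limit is a.s.\ finite using the pairing with $\vc{u}$ together with the strong law for the increments of $\br{\vc{u},\vc{V}(\cdot)}$. Carrying that single argument through for both assertions (rather than splicing in a Foster--Lyapunov first half that does not close) would give a complete and correct proof in the stated generality.
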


The $\vc{V}(\cdot)$ of this paper does not have stationary increments unless $J(0)$ is subject to $\pi$, but it can be coupled with the one which has stationary increments in a finite time w.p.1 because $J(\cdot)$ is a Markov chain. Namely, denote the $J(\cdot)$ which has the initial distribution $\pi$ by $\widetilde{J}(\cdot)$, then there is a finite nonnegative random variable $\tau$ such that $J(t)$ and $\widetilde{J}(t)$ are stochastically identical for $t \ge \tau$, which is well known for a discrete time Markov chain (e.g. see Proposition 3.13 of \cite{Asmu2003}) and its extension to the continuous time case is obvious. Then, $V(t)$ has stationary increments after time $\tau$ if $J(t)$ is replaced by $\widetilde{J}(t)$ after that time. Thus, all the results of \lem{stability 1} are valid for the present net flow process $\vc{V}(\cdot)$, and we immediately see the following fact.

\begin{lemma} \rm
\label{lem:stability 2}
The fluid network process $\vc{X}(\cdot)$ satisfying the conditions (\sect{MMFN}a) and (\sect{MMFN}d), that is, the MMFN process, has a stationary distribution if \eq{stability 1} holds.
\end{lemma}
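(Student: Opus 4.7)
The plan is to reduce the statement directly to \lem{stability 1} via a coupling argument on the background chain, as sketched in the paragraph preceding the lemma. First I would introduce the stationary version $\widetilde{J}(\cdot)$ of the background process, i.e., the Markov chain with transition matrix $Q$ started from its unique stationary distribution $\pi$ (which exists by (\sect{MMFN}a)). Since $Q$ is irreducible and $\sr{J}$ is finite, I would construct a coupling of $J(\cdot)$ and $\widetilde{J}(\cdot)$ on a common probability space together with an almost surely finite random time $\tau$ such that $J(t)=\widetilde{J}(t)$ for all $t\ge\tau$; this is the standard Markov-chain coupling result cited in the excerpt.

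Next I would form the coupled net flow process $\widetilde{\vc{V}}(\cdot)$ by plugging $\widetilde{J}(\cdot)$ into \eq{v k}--\eq{U k}. Because $\widetilde{J}(\cdot)$ is stationary, $\widetilde{\vc{V}}(\cdot)$ has stationary increments, and its mean drift vector is exactly $\ol{\vc{v}}$ from \eq{stability}. Applying \lem{stability 1} to the reflected process $\widetilde{\vc{Z}}(\cdot) = \Psi(\widetilde{\vc{V}}(\cdot))_{\text{first component}}$ (started, say, from $\vc{0}$) under the hypothesis \eq{stability 1} gives that $\widetilde{\vc{Z}}(t)$ is tight and in fact converges in distribution as $t\to\infty$.

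I would then transfer this tightness to $\vc{Z}(\cdot)$. By construction $J(t)=\widetilde{J}(t)$ for all $t\ge\tau$, so $\vc{V}(t)-\vc{V}(\tau)=\widetilde{\vc{V}}(t)-\widetilde{\vc{V}}(\tau)$ for $t\ge\tau$. The Lipschitz continuity of the Skorohod map $\Psi$ established just before \lem{reflection map 1} then shows that, starting the reflected process driven by $\vc{V}(\cdot)$ from the (random but a.s.\ finite) state $\vc{Z}(\tau)$, it differs from $\widetilde{\vc{Z}}$ only through its (finite) initial condition, so tightness of $\widetilde{\vc{Z}}(\cdot)$ yields tightness of $\vc{Z}(\cdot)$. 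Combining tightness with the fact that the joint process $\vc{X}(\cdot)=(\vc{Z}(\cdot),J(\cdot))$ is a Feller Markov process on $S\times\sr{J}$ (again a consequence of Lipschitz continuity of $\Psi$ together with the Feller property of the finite-state chain $J(\cdot)$), a Krylov--Bogolyubov averaging argument over the family $\{\frac{1}{t}\int_0^t \dd{P}(\vc{X}(s)\in\cdot)\,ds\}_{t>0}$ produces an invariant probability measure, giving the stationary distribution.

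The main obstacle I expect is bookkeeping rather than anything deep: one must be careful that (i) the coupling respects the joint law so that the reflected processes driven by $\vc{V}$ and $\widetilde{\vc{V}}$ can legitimately be compared pathwise on $[\tau,\infty)$, and (ii) the passage from tightness of the $\vc{Z}$-marginal to a stationary law of the \emph{joint} Markov process $(\vc{Z},J)$ uses that the $J$-component is already tight (trivially, since $\sr{J}$ is finite) and that the joint process is Feller. Neither step is hard, but they are the places where the Markov-modulation structure interacts non-trivially with the Kelly--Whitt result, so they should be spelled out rather than absorbed into ``it is obvious''.
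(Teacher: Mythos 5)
Your proposal follows essentially the same route as the paper: couple $J(\cdot)$ with its stationary version $\widetilde{J}(\cdot)$ via a finite coupling time $\tau$, note that the driving process then has stationary increments beyond $\tau$, and invoke \lem{stability 1} (Kelly--Whitt) to obtain tightness of the buffer-content process, from which a stationary distribution for the joint Markov process follows. The paper compresses the final passage from tightness to an invariant measure into ``we immediately see the following fact,'' whereas you make it explicit via the Lipschitz property of the Skorohod map (to transfer tightness across the coupling) and a Feller/Krylov--Bogolyubov step, but this is the same argument spelled out rather than a different one.
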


We next interpret the stability condition \eq{stability 1} in terms of in and out flow rates at stations. Namely, let $\{\alpha_{k}(i); k \in \sr{K}, i \in \sr{J}\}$ be the solution of the following linear traffic equation.
\begin{align}
\label{eq:traffic 1}
  \alpha_{k}(i) = \lambda_{k}(i) + \sum_{\ell \in \sr{K}} \alpha_{\ell}(i) p_{\ell,k}, \qquad k \in \sr{K}, i \in \sr{J}.
\end{align}
Recall that $\pi$ is the stationary distribution of the background process $J(\cdot)$, and let
\begin{align*}
  \ol{\alpha}_{k} = \sum_{i \in \sr{J}} \pi(i) \alpha_{k}(i), \qquad \ol{\lambda}_{k} = \sum_{i \in \sr{J}}  \pi(i) \lambda_{k}(i), \qquad \ol{\mu}_{k} = \sum_{i \in \sr{J}}  \pi(i) \mu_{k}(i), \qquad k \in \sr{K}.
\end{align*}
Then, \eq{traffic 1} can be written as $\ol{\vc{\alpha}} = R^{-1} \ol{\vc{\lambda}}$, where $\ol{\vc{\alpha}}$ and $\ol{\vc{\lambda}}$ are the column vectors whose $k$-th entries are $\ol{\alpha}_{k}$ and $\ol{\lambda}_{k}$, respectively. On the other hand, it follows from \eq{v k} that
\begin{align*}
  R^{-1} \ol{\vc{v}} = R^{-1} (\ol{\vc{\lambda}} - R \ol{\vc{\mu}}) = \ol{\vc{\alpha}} - \ol{\vc{\mu}}.
\end{align*}
and therefore the stability condition \eq{stability 1} is equivalent to the standard condition:
\begin{align}
\label{eq:stability 2}
  \ol{\vc{\alpha}} < \ol{\vc{\mu}}.
\end{align}

Note that \lem{stability 2} tells nothing about the uniqueness of the stationary distribution, although it likely holds. This does not cause any problem in our analysis because our results are valid for any stationary distribution. Although we do not need the uniqueness of the stationary distribution, it would be nice if it can be proved. In \sectn{tail}, we give some related result as \lem{reachable 1}. In general, it requires to study a certain irreducibility for the Markov process $\vc{X}(\cdot)$ such as $\psi$-irreducibility (see, e.g., \cite{MeynTwee1993}). However, such an irreducibility is not easy to get because $v_{k}(J(t))$ and $v_{\ell}(J(t))$ are not independent for $k \ne \ell$. This is contrasted with $\psi$-irreducibility for the generalized Jackson network (e.g., see \cite{Dai1995,MeynDown1994}). So far, we will not further consider the uniqueness of the stationary distribution.

In our arguments, the MMFN process will be studied also under change of measure. In this case, the stability condition \eq{stability 2} may not hold. Thus, we need to consider the situation that there is a station $k \in \sr{K}$ such that $\ol{\alpha}_{k} \ge \ol{\mu}_{k}$. In particular, we obviously have the following fact by considering a fluid limit (see \sectn{ZA limit 1} for fluid scaling limit).

\begin{lemma}\rm
\label{lem:unstable 1}
For $k \in \sr{K}$, if $\ol{\alpha}_{k} > \ol{\mu}_{k}$, equivalently, $[R^{-1} \ol{\vc{v}}]_{k} > 0$ and if all the other stations are stable, then station $k$ is unstable, that is, $Z_{k}(t)$ has no stationary distribution.
\end{lemma}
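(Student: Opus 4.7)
The plan is to argue by contradiction via a fluid (law-of-large-numbers) scaling. Suppose, contrary to the claim, that $Z_{k}(\cdot)$ also admits a stationary distribution. Then together with the hypothesised stability of the other coordinates, the full vector $\vc{Z}(\cdot)$ is tight, so the joint Markov process $\vc{X}(\cdot)$ possesses an invariant probability measure. The goal is to show that, under this measure, the long-run throughput at station $k$ necessarily equals $\ol{\alpha}_{k}$, contradicting the pathwise bound $b_{k}(t) \le \mu_{k}(J(t))$ supplied by \lem{b 1}.

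In detail, I first pass to an ergodic component of an invariant law, so that a stationary and ergodic version of $\vc{X}(\cdot)$ is available. Using the bounds $0 \le b_{\ell}(t) \le \mu_{\ell}(J(t))$ from \lem{b 1} together with Birkhoff's ergodic theorem applied to the stationary process $\vc{X}(\cdot)$, one obtains constants $\ol{b}_{\ell} \in [0,\ol{\mu}_{\ell}]$ with $B_{\ell}(t)/t \to \ol{b}_{\ell}$ a.s.\ for every $\ell \in \sr{K}$. Simultaneously $Z_{\ell}(t)/t \to 0$ a.s.\ by stationarity of $\vc{Z}$. Dividing the flow-balance identity \eq{Z 1} by $t$ and letting $t \to \infty$ then yields
\begin{align*}
 0 = \ol{\lambda}_{\ell} + \sum_{j \in \sr{K}} p_{j\ell}\, \ol{b}_{j} - \ol{b}_{\ell}, \qquad \ell \in \sr{K},
\end{align*}
i.e.\ $R\,\ol{\vc{b}} = \ol{\vc{\lambda}}$, which by the traffic equation \eq{traffic 1} forces $\ol{\vc{b}} = \ol{\vc{\alpha}}$. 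In particular $\ol{\alpha}_{k} = \ol{b}_{k} \le \ol{\mu}_{k}$, contradicting $\ol{\alpha}_{k} > \ol{\mu}_{k}$, so the assumption that $Z_{k}(\cdot)$ is stationary is untenable.

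The main obstacle is the identification of the throughputs $\ol{b}_{\ell}$ as deterministic constants. Because $b_{\ell}(t)$ depends on the indicator $1(Z_{\ell}(t)>0)$ in addition to $J(t)$, one cannot read $\ol{b}_{\ell}$ off from the stationarity of the background chain alone; it is essential to work with the joint process $\vc{X}(\cdot)$ and its ergodic decomposition. An alternative route that sidesteps ergodicity is the fluid scaling $\vc{Z}^{n}(t) = \vc{Z}(nt)/n$: the Lipschitz continuity of the Skorohod map in \lem{reflection map 1}, combined with the tightness of every coordinate, forces any subsequential limit of $\vc{Z}^{n}$ to vanish identically, and the resulting deterministic fluid balance again gives $\ol{\vc{b}} = \ol{\vc{\alpha}}$ and the same contradiction.
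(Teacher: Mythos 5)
Your proof is correct and supplies what the paper itself only asserts: the paper states that the lemma is ``obvious by considering a fluid limit'' (pointing to \sectn{ZA limit 1}) but gives no details. Your contradiction argument---extract an invariant measure from tightness of all coordinates, pass to an ergodic component, apply Birkhoff to the departure rates $b_\ell(\cdot)$, read off $R\,\ol{\vc{b}}=\ol{\vc{\lambda}}$ from the long-run flow balance in \eq{Z 1}, hence $\ol{b}_k=\ol{\alpha}_k$, and contrast this with the pathwise cap $b_k(t)\le\mu_k(J(t))$---is the ergodic-theorem rendering of the same LLN idea, and your observation that $b_\ell(t)$ depends on the zero set of $\vc{Z}(t)$, so the ergodic theorem must be applied to the joint process $\vc{X}(\cdot)$ and not to $J(\cdot)$ alone, is precisely the point that makes the throughputs identifiable. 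The fluid-scaling sketch you give at the end is closer in form to what \sectn{ZA limit 1} actually does (it rests on the LLN \eq{drift 1} for the background chain, not on tightness of $\vc{Z}$), but both routes lead to the same flow-balance contradiction. One small imprecision worth fixing: $Z_\ell(t)/t\to 0$ a.s.\ is not a direct consequence of stationarity; rather, the a.s.\ limit of $Z_\ell(t)/t$ exists because every other term of \eq{Z 1} divided by $t$ converges a.s., it is nonnegative since $Z_\ell\ge 0$, and it must vanish because a stationary sequence $\{Z_\ell(n)\}$ cannot diverge to $+\infty$ on a set of positive probability (use $\dd{P}(Z_\ell(n)>n\varepsilon)=\dd{P}(Z_\ell(0)>n\varepsilon)\to 0$ and Fatou). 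With that tidied up, the argument is a clean and complete proof of what the paper leaves as an assertion.
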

It should be noted that, if there exists more than one station $k$ such that $\ol{\alpha}_{k} > \ol{\mu}_{k}$, all of them may not be unstable because the departure rates of those stations are bounded by the release rates and therefore they may be smaller than the total arrival rates obtained as the solutions of the traffic equation \eq{traffic 1}.

To find unstable stations, we need to solve the nonlinear traffic equations:
\begin{align}
\label{eq:traffic 2}
  \alpha_{k}(i) = \lambda_{k}(i) + \sum_{\ell \in \sr{K}} \min(\alpha_{\ell}(i), \mu_{\ell}(i)) p_{\ell,k}, \qquad k \in \sr{K}, i \in \sr{J}.
\end{align}
It is known that this traffic equation has a maximal solution for each $i \in \sr{J}$ (see Theorem 3.1 of \cite{ChenMand1991}). Denote this solution $\alpha_{k}(i)$ by $\alpha^{*}_{k}(i)$ and its average under $\pi$ by $\ol{\alpha}^{*}_{k}$. Then, ignoring the stations for which the net flow rates are nonnegative while keeping their out flows and applying the arguments of \cite{KellWhit1996}, we can see that station $k \in \sr{K}$ is stable if 
\begin{align}
\label{eq:stability 0}
  \ol{\alpha}^{*}_{k} < \ol{\mu}_{k}.
\end{align}
Although \eq{traffic 2} can be solved in a finite number of arithmetic operations, the solution is analytically intractable.  So, we will not solve \eq{traffic 2}, but use the fact that $\ol{v}_{k} < 0$ implies \eq{stability 0} under a change of measure (see \lem{stability 3}).\vspace{-1ex}

\subsection{Dynkin's formula for the MMFN process}
\label{sec:Dynkin}

In this subsection, we derive a time evolution formula for the MMFN process known as a Dynkin's formula, which is a semi-martingale with an absolutely continuous bounded variation component. This representation will be used to study the tail decay rates of the stationary distribution of $\vc{Z}(t)$. Let us introduce some notations. $C^{1}(\dd{R}^{d})$ denotes the subset of $C(\dd{R}^{d})$ whose elements are continuous partial derivatives, and $F_{+}(\sr{J})$ denotes the set of all functions from $\sr{J}$ to $\dd{R}_{+} \setminus \{0\}$.

For $n \ge 1$, let $t_{n}$ be the $n$-th transition instants of the background Markov chain $J(\cdot)$, and let $N(t) = \sup\{n \ge 0; t_{n} \le t\}$ for $t \ge 0$, where $t_{0} = 0$. Then, $N(\cdot) \equiv \{N(t); t \ge 0\}$ is a simple counting process, that is, $N(\cdot)$ is a nondecreasing piecewise constant process with unit jumps. For $g \in C^{1}(\dd{R}^{d})$ and $h \in F_{+}(\sr{J})$, let $f(\vc{z},j) = g(\vc{z}) h(j)$ for $(\vc{z},j) \in S \times \sr{J}$. Then, recalling that $\vc{X}(t) = (\vc{Z}(t), J(t))$ and using an elementary integration formula that
\begin{align*}
  f(\vc{X}(t)) = f(\vc{X}(0)) + \int_{0}^{t} d f(\vc{X}(u)), \qquad t \ge 0,
\end{align*}
\eq{Z 2} and the differentiability of $Z_{k}(t), V_{k}(t), Y_{k}(t)$ yield
\begin{align}
\label{eq:evolution 1}
 & f(\vc{X}(t)) = f(\vc{X}(0)) + \int_{0}^{t} \sum_{k = 1}^{d} g_{k}(\vc{Z}(s)) h(J(s)) v_{k}(J(s)) ds \nonumber\\
  & \qquad +\int_{0}^{t} \sum_{k, \ell \in \sr{K}} g_{k}(\vc{Z}(s)) h(J(s)) r_{k,\ell} y_{\ell}(s) ds + \int_{0}^{t} g(\vc{Z}(s)) \Delta h(J(s)) dN(s), \; t \ge 0,
\end{align}
where recall that $r_{k,\ell}$ is the $(k,\ell)$ entry of $R$, $g_{k}(\vc{z}) = \frac {\partial}{\partial z_{k}} g(\vc{z})$, and $\Delta h(J(s)) = h(J(s)) - h(J(s-))$. Then, it is not hard to see the following formulas, called Dynkin's formula. For completeness of our arguments, we give its proof in \sectn{Dynkin 1}.

\begin{lemma}\rm
\label{lem:Dynkin 1}
For $f(\vc{z},j) = g(\vc{z}) h(j)$ for $(\vc{z},j) \in S \times \sr{J}$ with $g \in C^{1}(\dd{R}^{d})$ and $h \in F_{+}(\sr{J})$, $f(\vc{X}(t))$ is a semi-martingale such that
\begin{align}
\label{eq:Dynkin 1}
  f(\vc{X}(t)) & = f(\vc{X}(0)) + \int_{0}^{t} \sr{A}f(\vc{X}(s)) ds + M(t), \qquad t \ge 0,
\end{align}
where $M(\cdot) \equiv \{M(t); t \ge 0\}$ is an $\dd{F}$-martingale, and $\sr{A} f$ is the generator of the Markov process $\vc{X}(\cdot)$ defined as
\begin{align}
\label{eq:A 1}
 \sr{A}f(\vc{X}(t)) & =  \sum_{k \in \sr{K}} g_{k}(\vc{Z}(t)) v_{k}(J(t)) h(J(t)) + g(\vc{Z}(t)) \sum_{j \in \sr{J}} q_{J(t),j} h(j) \nonumber\\
  & \qquad + \sum_{\ell \in \sr{K}} \left(\sum_{k \in \sr{K}} g_{k}(\vc{Z}(t)) h(J(t)) r_{k,\ell} \right) y_{\ell}(t).
\end{align}
\end{lemma}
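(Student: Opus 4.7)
The plan is to convert the pathwise identity \eq{evolution 1} into the semi-martingale decomposition \eq{Dynkin 1} by isolating the predictable compensator hidden in the last integral. Three of the four terms in \eq{evolution 1} already match the absolutely continuous part of \eq{Dynkin 1}: the initial value $f(\vc{X}(0))$, the drift term $\int_{0}^{t}\sum_{k}g_{k}(\vc{Z}(s))v_{k}(J(s))h(J(s))\,ds$, and the reflection term $\int_{0}^{t}\sum_{k,\ell}g_{k}(\vc{Z}(s))h(J(s))r_{k,\ell}y_{\ell}(s)\,ds$. These reproduce the first and third pieces of $\sr{A}f$ in \eq{A 1}, so the only work is to decompose the jump integral $\int_{0}^{t}g(\vc{Z}(s))\Delta h(J(s))\,dN(s)$ into a compensator plus an $\dd{F}$-martingale, and to check that the compensator equals $\int_{0}^{t}g(\vc{Z}(s))\sum_{j\in\sr{J}}q_{J(s),j}h(j)\,ds$.

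For the compensator, I would invoke standard counting-process theory for a continuous-time Markov chain on the finite state space $\sr{J}$ with generator $Q$. The random jump measure associated with $J(\cdot)$ has $\dd{F}$-predictable intensity kernel given by $q_{J(s-),j}\,ds$ for $j\neq J(s-)$. Hence, for the bounded predictable integrand $g(\vc{Z}(s-))\bigl(h(j)-h(J(s-))\bigr)$, the process
\begin{align*}
M(t) = \int_{0}^{t} g(\vc{Z}(s))\Delta h(J(s))\,dN(s) - \int_{0}^{t} g(\vc{Z}(s))\sum_{j\in\sr{J}}\bigl(h(j)-h(J(s))\bigr)q_{J(s),j}\,ds
\end{align*}
is an $\dd{F}$-local martingale. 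Using $q_{J(s),J(s)}=-\sum_{j\neq J(s)}q_{J(s),j}$ to absorb the diagonal, the compensator simplifies to $\int_{0}^{t}g(\vc{Z}(s))\sum_{j\in\sr{J}}q_{J(s),j}h(j)\,ds$, which is exactly the second piece of $\sr{A}f$ in \eq{A 1}. Substituting back into \eq{evolution 1} and regrouping yields \eq{Dynkin 1}.

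It then remains to upgrade $M(\cdot)$ from a local martingale to a true $\dd{F}$-martingale on each finite horizon. By \lem{reflection map 1}, $\vc{Z}(\cdot)$ is Lipschitz continuous on $[0,t]$ and hence bounded pathwise, so $g(\vc{Z}(\cdot))$ is bounded on $[0,t]$ by continuity; $h$ is bounded because $\sr{J}$ is finite; and $\dd{E}[N(t)]$ is controlled by $t\max_{i\in\sr{J}}|q_{ii}|$. A routine localization by a sequence of stopping times, combined with dominated convergence, promotes $M$ to a genuine martingale.

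The main obstacle is really bookkeeping rather than a conceptual hurdle: one must be careful that the product form $f(\vc{z},j)=g(\vc{z})h(j)$ interacts correctly with the jump decomposition, since $\Delta h(J(s))$ is a pure-jump object while $g(\vc{Z}(\cdot))$ is continuous. Because $\vc{Z}(\cdot)$ has no jumps (it is Lipschitz by \lem{reflection map 1}), the cross term vanishes and $\Delta f(\vc{X}(s))=g(\vc{Z}(s))\Delta h(J(s))$, which is what makes the integration-by-parts computation in \eq{evolution 1} go through cleanly. Once this is observed, the martingale identification is the standard compensation of a finite-state Markov chain, and no further technicalities arise.
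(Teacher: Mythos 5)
Your proof is correct and takes essentially the same route as the paper: both start from \eq{evolution 1}, isolate the jump integral $\int_0^t g(\vc{Z}(s))\Delta h(J(s))\,dN(s)$, compensate it via the predictable intensity $q_{J(s-),j}\,ds$ of the chain's jump measure, and use $\sum_j q_{ij}=0$ to identify the compensator with $\int_0^t g(\vc{Z}(s))\sum_j q_{J(s),j}h(j)\,ds$. The only cosmetic difference is that the paper writes $M(t)$ in a two-term form using conditional expectations and the compensator $-q_{J(s-),J(s-)}\,ds$ of $dN(s)$, whereas you phrase the same decomposition in the language of a predictable intensity kernel; your extra remark on localization and the continuity of $\vc{Z}(\cdot)$ is sound and simply makes explicit what the paper leaves to the reader.
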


We next choose specific functions for $g$ and $h$ for each $\vc{\theta} \in \dd{R}^{d}$. For $g$, we choose $e^{\br{\vc{\theta},\vc{z}}}$ and $h$ to be positive, that is, $h \in F_{+}(\sr{J})$. Namely,
\begin{align}
\label{eq:test 1}
  f(\vc{x}) = e^{\br{\vc{\theta},\vc{z}}} h(j), \qquad \vc{x} \equiv (\vc{z},j) \in \dd{R}_{+}^{d} \times \sr{J}.
\end{align}
Define $m \times m$ matrix $K(\vc{\theta})$ as
\begin{align*}
  K(\vc{\theta}) = \diag{\sum_{k = 1}^{d} \theta_{k} \vc{v}_{k}} + Q,
\end{align*}
where $\vc{v}_{k} = (v_{k}(1), \ldots, v_{k}(m))^{\rs{t}}$ for $k \in \sr{K}$ and $\diag{\vc{a}}$ is the $m$-dimensional diagonal matrix whose $i$-th diagonal entry is $a(i)$ for $m$-dimensional vector $\vc{a} = (a(1), \ldots, a(m))^{\rs{t}}$, then $\sr{A}f(\vc{X}(t))$ of \eq{A 1} becomes
\begin{align}
\label{eq:A 2}
 \sr{A}f(\vc{X}(t)) & = f(\vc{X}(t)) \left(\frac {[K(\vc{\theta}) \vc{h}]_{J(t)}} {h(J(t))} + \sum_{\ell \in \sr{K}} \sum_{k \in \sr{K}} \theta_{k} r_{k,\ell} y_{\ell}(t) \right).
\end{align}
Note that the set of all functions of \eq{test 1} is a sufficiently rich class for $f(\vc{z},i)$ to uniquely determine the extended generator $\sr{A}$. We present a function $h \in F_{+}(\sr{J})$ by positive vector $\vc{h}$ whose $i$-th entry is $h(i)$.

We next choose the vector $\vc{h}$ to be convenient for deriving a martingale for change of measure.
For this, we consider an eigenvalue of $K(\vc{\theta})$, and find $\vc{h}$ as its associated eigenvector. Since $Q$ is irreducible and $K(\vc{\theta})$ has nonnegative off diagonal entries, which is called an $ML$ or essentially nonnegative matrix, by Theorem 2.6 of \cite{Sene1981}, $K(\vc{\theta})$ has a unique eigenvalue $\gamma(\vc{\theta})$ such that $\gamma(\vc{\theta})$ is real and greater than the real parts of all the other eigenvalues of $K(\vc{\theta})$, that is, the Perron-Frobenius eigenvalue of $K(\vc{\theta})$. Denote the right eigenvector of $K(\vc{\theta})$ associated with $\gamma(\vc{\theta})$ by $\vc{h}_{\vc{\theta}}$. Thus, we have
\begin{align}
\label{eq:K 1}
  K(\vc{\theta}) \vc{h}_{\vc{\theta}} = \gamma(\vc{\theta}) \vc{h}_{\vc{\theta}}.
\end{align}
Since $\vc{h}_{\vc{\theta}}$ is positive and unique up to a constant multiplier, we normalize it by the left eigenvector $\vc{\xi}_{\vc{\theta}}$ of  $K(\vc{\theta})$ associated with the eigenvalue $\gamma(\vc{\theta})$ in such a way that
\begin{align}
\label{eq:normalize h1}
  \br{\vc{\xi}_{\vc{\theta}}, \vc{h}_{\vc{\theta}}} = 1, \qquad \br{\vc{\xi}_{\vc{\theta}}, \vc{1}} = 1.
\end{align}
Note that $\vc{\xi}_{\vc{0}} = (\pi(1),\pi(2),\ldots,\pi(m))^{\rs{t}}$ and $\vc{h}_{\vc{0}} = \vc{1}$ for the stationary distribution $\pi \equiv \{\pi(i); i \in \sr{J}\}$ of $J(\cdot)$ since $K(\vc{0}) = Q$. We define functions $\gamma_{k}(\vc{\theta})$ as
\begin{align*}
  \gamma_{k}(\vc{\theta}) = [\vc{\theta}^{\rs{t}} R]_{k}, \qquad k \in \sr{K}.
\end{align*}
Letting $\vc{h} = \vc{h}_{\vc{\theta}}$ and $f(\vc{z},i) = f_{\vc{\theta}}(\vc{z},i) \equiv e^{\br{\vc{\theta},\vc{z}}} h_{\vc{\theta}}(i)$ in \eq{A 2}, we have 
\begin{align}
\label{eq:A 3}
 \sr{A}f_{\vc{\theta}}(\vc{X}(t)) = f_{\vc{\theta}}(\vc{X}(t)) \left(\gamma(\vc{\theta}) + \sum_{\ell \in \sr{K}} \gamma_{\ell}(\vc{\theta}) y_{\ell}(t) \right).
\end{align}
\lem{Dynkin 1} together with this formula is a key tool for us.

\subsection{Geometric objects}
\label{sec:geometric}

As we will see in \lem{marginal 2}, if $\{\vc{X}(t); t \ge 0\}$ is a stationary process, \eq{Dynkin 1} and \eq{A 3} yield the stationary equation for appropriately chosen $\vc{\theta}$. From such an equation, it is not hard to guess that $\gamma(\vc{\theta})$ and $\gamma_{k}(\vc{\theta})$ are key characteristics for obtaining the tail decay rates. In this section, we consider geometric properties generated by these functions.

We first note that $\gamma_{k}(\vc{\theta})$ is a linear function of $\vc{\theta}$, while $\gamma(\vc{\theta})$ has the following nice analytic properties.

\begin{lemma} {\rm
\label{lem:gamma 1}
\begin{itemize}
\item [(a)] $\gamma(\vc{\theta})$ is a convex function of $\vc{\theta} \in \dd{R}^{d}$.
\item [(b)] $\gamma(\vc{\theta})$ and $\vc{h}_{\vc{\theta}}$ are continuously and partially differentiable for all $\vc{\theta} \in \dd{R}^{d}$, where  a vector valued function is differentiated in entry-wise.
\item [(c)] Let $\nabla \gamma(\vc{\theta}) = \left(\frac {\partial} {\partial \theta_{1}} {\gamma}(\vc{\theta}), \frac {\partial} {\partial \theta_{2}} {\gamma}(\vc{\theta}), \ldots, \frac {\partial} {\partial \theta_{d}} {\gamma}(\vc{\theta})\right)^{\rs{t}}$, that is, the gradient of $\gamma(\vc{\theta})$, then
\begin{align}
\label{eq:mean v 1}
  \ol{\vc{v}} = \nabla \gamma(\vc{\theta})|_{\vc{\theta} = \vc{0}},
\end{align}
\end{itemize}
}\end{lemma}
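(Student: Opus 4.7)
The three parts are largely standard consequences of Perron--Frobenius theory for the essentially nonnegative matrix $K(\vc{\theta})$, so the plan is to reduce each to a well-known tool and execute the computation.

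For part (a), my plan is to identify $\gamma(\vc{\theta})$ as the exponential growth rate of a moment generating function and transfer convexity from there. Specifically, let $\phi_{i,j}(t,\vc{\theta}) = \dd{E}\bigl[e^{\br{\vc{\theta},\vc{V}(t)}} 1(J(t)=j) \bigm| J(0)=i\bigr]$. Since $\vc{V}(t) = \int_0^t \vc{v}(J(s))\,ds$, the Kolmogorov forward equation gives $\frac{d}{dt}\phi = \phi\, K(\vc{\theta})$, so $\phi(t,\vc{\theta}) = e^{t K(\vc{\theta})}$ in matrix form. Because $K(\vc{\theta})$ is essentially nonnegative and, by (\sect{MMFN}a), irreducible, its Perron--Frobenius eigenvalue $\gamma(\vc{\theta})$ satisfies
\begin{align*}
  \gamma(\vc{\theta}) = \lim_{t \to \infty} \frac{1}{t} \log \sum_{j \in \sr{J}} \phi_{i,j}(t,\vc{\theta}) = \lim_{t\to\infty}\frac{1}{t}\log \dd{E}_{i}\bigl[e^{\br{\vc{\theta},\vc{V}(t)}}\bigr].
\end{align*}
For each fixed $t$, H\"older's inequality makes $\log \dd{E}_{i}[e^{\br{\vc{\theta},\vc{V}(t)}}]$ a convex function of $\vc{\theta}$, and convexity is preserved under dividing by $t$ and taking pointwise limits. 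Hence $\gamma(\vc{\theta})$ is convex.

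For part (b), my plan is to invoke analytic perturbation theory. The entries of $K(\vc{\theta})$ are (in fact) affine functions of $\vc{\theta}$, so $K(\vc{\theta})$ depends analytically on $\vc{\theta}$. By Perron--Frobenius for irreducible essentially nonnegative matrices, $\gamma(\vc{\theta})$ is a simple eigenvalue of $K(\vc{\theta})$ and is strictly separated from the rest of the spectrum in real part. A simple, isolated eigenvalue of a matrix depending analytically on parameters is itself an analytic (in particular $C^{1}$) function of those parameters, and the same is true for the normalized right eigenvector $\vc{h}_{\vc{\theta}}$ (see, e.g., Kato's perturbation theory of linear operators). This gives (b); the only thing to check is that the chosen normalization \eq{normalize h1} is smooth, and it is, because $\br{\vc{\xi}_{\vc{\theta}},\vc{h}_{\vc{\theta}}} = 1$ together with $\br{\vc{\xi}_{\vc{\theta}},\vc{1}} = 1$ selects $\vc{\xi}_{\vc{\theta}}$ and $\vc{h}_{\vc{\theta}}$ uniquely from one-dimensional eigenspaces whose dependence on $\vc{\theta}$ is analytic.

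For part (c), with differentiability in hand, I would just differentiate the defining equation \eq{K 1} and contract with the left eigenvector. Differentiating $K(\vc{\theta})\vc{h}_{\vc{\theta}} = \gamma(\vc{\theta})\vc{h}_{\vc{\theta}}$ in $\theta_{k}$ gives
\begin{align*}
   \diag{\vc{v}_{k}}\,\vc{h}_{\vc{\theta}} + K(\vc{\theta})\,\partial_{k}\vc{h}_{\vc{\theta}} = (\partial_{k}\gamma(\vc{\theta}))\,\vc{h}_{\vc{\theta}} + \gamma(\vc{\theta})\,\partial_{k}\vc{h}_{\vc{\theta}}.
\end{align*}
Take the inner product with $\vc{\xi}_{\vc{\theta}}$ and use $\vc{\xi}_{\vc{\theta}}^{\rs{t}} K(\vc{\theta}) = \gamma(\vc{\theta})\vc{\xi}_{\vc{\theta}}^{\rs{t}}$ together with $\br{\vc{\xi}_{\vc{\theta}},\vc{h}_{\vc{\theta}}}=1$ to get $\partial_{k}\gamma(\vc{\theta}) = \br{\vc{\xi}_{\vc{\theta}}, \diag{\vc{v}_{k}}\vc{h}_{\vc{\theta}}}$. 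Evaluating at $\vc{\theta}=\vc{0}$, where $\vc{h}_{\vc{0}}=\vc{1}$ and $\vc{\xi}_{\vc{0}}=\pi$, yields $\partial_{k}\gamma(\vc{0}) = \sum_{i}\pi(i) v_{k}(i) = \ol{v}_{k}$, which is \eq{mean v 1}.

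I do not expect serious obstacles here; the only delicate point is justifying the analytic dependence of $\vc{h}_{\vc{\theta}}$ with the given normalization, but this is immediate from simplicity of $\gamma(\vc{\theta})$ and the implicit function theorem applied to the system $(K(\vc{\theta})-\gamma(\vc{\theta})I)\vc{h} = 0$, $\br{\vc{\xi}_{\vc{\theta}},\vc{h}}=1$.
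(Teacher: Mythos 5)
Your proof is correct but takes a genuinely different route for parts (a) and (b). For (a), the paper simply cites the Cohen/Nussbaum theorem that the Perron--Frobenius eigenvalue of an essentially nonnegative matrix whose diagonal depends affinely (more generally convexly) on a parameter is itself convex; you instead recover $\gamma(\vc{\theta})$ probabilistically as $\lim_{t\to\infty}\tfrac{1}{t}\log\dd{E}_{i}[e^{\br{\vc{\theta},\vc{V}(t)}}]$ via the ODE $\tfrac{d}{dt}\phi=\phi K(\vc{\theta})$, and deduce convexity from H\"older and stability of convexity under pointwise limits. Your route is more self-contained and elucidates the meaning of $\gamma$ as a limiting log-moment-generating function, but it requires justifying the growth-rate identification (that the spectral abscissa of the essentially nonnegative irreducible matrix $K(\vc{\theta})$ governs the exponential rate of $e^{tK(\vc{\theta})}$); the paper's citation sidesteps that. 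For (b), the paper argues via the monic characteristic polynomial of degree $m$ (so $\gamma(\vc{z})$ is an at most $m$-valued algebraic function) together with uniqueness of the PF eigenvalue on $\dd{R}^{d}$ to conclude single-valued analyticity near the real domain, and then obtains smoothness of $\vc{h}_{\vc{\theta}}$ by Cramer's rule; you invoke Kato-type perturbation theory for a simple isolated eigenvalue of an analytic matrix family, which is the standard modern argument and arrives at the same conclusion, arguably more cleanly. One small caution in your (b): the normalization constraint $\br{\vc{\xi}_{\vc{\theta}},\vc{h}}=1$ that you feed to the implicit function theorem itself involves $\vc{\xi}_{\vc{\theta}}$, so you should first establish smoothness of the left eigenvector (normalized by $\br{\vc{\xi},\vc{1}}=1$, say, which is $\vc{\theta}$-independent) and only then impose $\br{\vc{\xi}_{\vc{\theta}},\vc{h}_{\vc{\theta}}}=1$; this is a cosmetic reordering, not a gap. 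Part (c) is the same computation as in the paper: differentiate the eigenvalue equation, contract with the left eigenvector, and evaluate at $\vc{\theta}=\vc{0}$.
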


This lemma is proved in \sectn{gamma 1}. We next introduce geometric objects similar to those used in \cite{Miya2011}. For the Perron-Frobenius eigenvalue $\gamma(\vc{\theta})$, define the following sets.
\begin{align*}
& \Gamma^{-} = \{\vc{\theta} \in \dd{R}^{d}; {\gamma}(\vc{\theta}) < 0\}, \qquad \Gamma^{+} = \{\vc{\theta} \in \dd{R}^{d}; \gamma(\vc{\theta}) > 0\}.
\end{align*}
For $A \subset \sr{K}$, define the sets below, which concerns reflecting matrix $R$ through $\gamma_{k}(\vc{\theta})$.
\begin{align*}
 & \Gamma^{-}_{A} = \left\{\vc{\theta} \in \Gamma^{-}; \gamma_{k}(\vc{\theta}) < 0, \forall k \in A \right\},\quad
  \Gamma^{+}_{A} = \left\{\vc{\theta} \in \Gamma^{+}; \gamma_{k}(\vc{\theta}) \ge 0, \forall k \in A \right\}.
\end{align*}
For simplicity, $\Gamma^{-}_{\{k\}}$ and $\Gamma^{+}_{\{k\}}$ are written as $\Gamma^{-}_{k}$ and $\Gamma^{+}_{k}$, respectively, for $k \in \sr{K}$. Note that $\Gamma^{-}_{\emptyset} = \Gamma^{-}$. We define the set operations $\overleftarrow{}, \overrightarrow{}$ over set $C \subset R^{d}$ as
\begin{align*}
& \overleftarrow{C} = \{\vc{\theta} \in \dd{R}^{d}; \vc{\theta} < \vc{\theta}', \exists \vc{\theta}' \in C \}, \quad \overrightarrow{C} = \{\vc{\theta} \in \dd{R}^{d}; \vc{\theta}' < \vc{\theta}, \exists \vc{\theta}' \not\in C \}.
\end{align*}
We denote the boundary of $C$ by $\partial C $. That is, $\partial C = \ol{C} \setminus C^{\rm o}$, where $\ol{C}$ and $C^{\rm o}$ are the closure and interior of $C$, respectively. We will see that the operation $\overleftarrow{}$ and the sets $\overleftarrow{\Gamma}^{-}_{k}$'s play key roles to get the decay rates of the stationary distribution of $\vc{Z}(t)$. We here note the following lemma, which is proved in \sectn{convex 1}.

\begin{lemma} {\rm
\label{lem:convex 1}
(a)  For $A \subset \sr{K}$, $\Gamma^{-}_{A}$ and $\overleftarrow{\Gamma}^{-}_{A}$ are convex sets.

\noindent (b) Let $H_{\ol{\vc{v}}} = \{\vc{\theta} \in \dd{R}^{d}; \br{\ol{\vc{v}}, \vc{\theta}} = 0\}$, which is a hyperplane perpendicular to $\ol{\vc{v}}$. Then, $H_{\ol{\vc{v}}}$ contacts $\Gamma^{-}$ at the origin, and supports it, that is, $\Gamma^{-} \subset \{\vc{\theta} \in \dd{R}^{d}; \br{\ol{\vc{v}}, \vc{\theta}} \le 0\}$.

\noindent (c) If the stability condition \eq{stability 1} holds, then there is at least one $k$ such that the nonnegative half line of $k$-th axis, that is, $\{t \vcn{e}_{k} \in \dd{R}^{d}; t \ge 0\}$, intersects $\Gamma^{-}$ other than the origin.

\noindent (d) For each $\ell \in \sr{K}$, denote the hyperplane, $\{\vc{\theta} \in \dd{R}^{d}; \gamma_{\ell}(\vc{\theta}) = 0\}$, by $H_{\ell}$, and define ray $\sr{R}_{k} \equiv \cap_{\ell \in \sr{K} \setminus \{k\}} \{\vc{\theta} \in \dd{R}_{+}^{d}; \vc{\theta} \in H_{\ell}\}$, then the ray $\sr{R}_{k}$ intersects $\Gamma^{-}$ at other than the origin if and only if $\alpha_{k} < \mu_{k}$.
}\end{lemma}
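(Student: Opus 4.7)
\textbf{Plan for Lemma~\lemt{convex 1}.} For (a) I plan to use convexity of $\gamma$ from Lemma~\lemt{gamma 1}(a) together with linearity of $\gamma_k(\vc{\theta}) = [\vc{\theta}^{\rs{t}} R]_k$. The strict sublevel set of a convex function is convex, so $\Gamma^{-}$ is convex, and each $\{\vc{\theta}; \gamma_k(\vc{\theta}) < 0\}$ is an open half-space, making $\Gamma^{-}_{A}$ convex as a finite intersection. For $\overleftarrow{\Gamma}^{-}_{A}$, I take $\vc{\theta}_1, \vc{\theta}_2 \in \overleftarrow{\Gamma}^{-}_{A}$ with componentwise witnesses $\vc{\theta}_i < \vc{\theta}'_i \in \Gamma^{-}_{A}$; then for $\lambda \in (0,1)$ the convex combination $\lambda \vc{\theta}_1 + (1-\lambda) \vc{\theta}_2$ is strictly below $\lambda \vc{\theta}'_1 + (1-\lambda) \vc{\theta}'_2$, which lies in $\Gamma^{-}_{A}$ by what was just established.

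For (b) I would first observe that $K(\vc{0}) = Q$ and $Q\vc{1} = \vc{0}$, so the Perron--Frobenius eigenvalue at the origin is $\gamma(\vc{0}) = 0$ with $\vc{h}_{\vc{0}} = \vc{1}$, placing $\vc{0}$ on $\partial \Gamma^{-} \cap H_{\ol{\vc{v}}}$. Convexity of $\gamma$ combined with \eq{mean v 1} then yields the first-order inequality
\begin{align*}
\gamma(\vc{\theta}) \ge \gamma(\vc{0}) + \br{\nabla \gamma(\vc{0}), \vc{\theta}} = \br{\ol{\vc{v}}, \vc{\theta}},
\end{align*}
so any $\vc{\theta} \in \Gamma^{-}$ satisfies $\br{\ol{\vc{v}}, \vc{\theta}} \le \gamma(\vc{\theta}) < 0$, which is the supporting property.

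For (c) I would expand $R^{-1} = \sum_{n \ge 0} (P^{\rs{t}})^n$, valid because $P^n \to 0$ by condition (\sect{MMFN}b). This matrix is entrywise nonnegative with diagonal entries at least $1$, so $R^{-1} \ol{\vc{v}} < \vc{0}$ rules out $\ol{\vc{v}} \ge \vc{0}$ and forces $\ol{v}_{k_0} < 0$ for some $k_0 \in \sr{K}$. Restricting $\gamma$ to $\{t \vcn{e}_{k_0}; t \ge 0\}$ then gives a convex function of $t$ with value $0$ and slope $\ol{v}_{k_0} < 0$ at $t = 0$, so $\gamma(t \vcn{e}_{k_0}) < 0$ for all sufficiently small $t > 0$.

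For (d) the equations $\gamma_\ell(\vc{\theta}) = 0$ for $\ell \ne k$ reduce to $\vc{\theta}^{\rs{t}} R = c \vcn{e}_k^{\rs{t}}$ for some $c \in \dd{R}$, which inverts to $\vc{\theta} = c (R^{\rs{t}})^{-1} \vcn{e}_k$. Since $R$ is an $\sr{M}$-matrix, $\vcn{u} \equiv (R^{\rs{t}})^{-1} \vcn{e}_k \ge \vc{0}$, so $\sr{R}_k = \{t \vcn{u}; t \ge 0\}$. The slope of $\gamma$ along this ray at the origin is
\begin{align*}
\br{\nabla \gamma(\vc{0}), \vcn{u}} = \vcn{u}^{\rs{t}} \ol{\vc{v}} = \vcn{e}_k^{\rs{t}} R^{-1} \ol{\vc{v}} = \ol{\alpha}_k - \ol{\mu}_k,
\end{align*}
by \eq{mean v 1} and the identity $R^{-1} \ol{\vc{v}} = \ol{\vc{\alpha}} - \ol{\vc{\mu}}$ derived just before \eq{stability 2}. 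A negative slope ($\ol{\alpha}_k < \ol{\mu}_k$) forces $\gamma < 0$ for small $t > 0$ and makes $\sr{R}_k \cap \Gamma^{-}$ nontrivial; a nonnegative slope together with convexity of $t \mapsto \gamma(t \vcn{u})$ yields $\gamma(t \vcn{u}) \ge 0$ for all $t \ge 0$, so the ray avoids $\Gamma^{-}$. I expect the main obstacle to be in (d), specifically in correctly identifying the direction of $\sr{R}_k$ as $(R^{\rs{t}})^{-1} \vcn{e}_k$ and recognizing that its pairing with $\ol{\vc{v}}$ is exactly the stability gap $\ol{\alpha}_k - \ol{\mu}_k$.
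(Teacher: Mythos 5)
Your proof is correct and follows essentially the same approach as the paper: convexity of $\gamma$ for (a), the supporting-hyperplane argument based on $\nabla\gamma(\vc{0})=\ol{\vc{v}}$ for (b) and (c), and identification of the ray direction via $R^{-1}$ together with the sign of $[R^{-1}\ol{\vc{v}}]_{k}$ for (d). You supply a few more explicit details than the paper leaves implicit (the Neumann series for $R^{-1}$, the first-order convexity inequality, and verifying $\vcn{u}\ge\vc{0}$), but the underlying argument is the same.
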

\begin{remark}
\label{rem:convex 1}
As we will see in \rem{lower 2}, the convex set, $\Gamma^{-}$, can be unbounded in some extreme cases. 
\end{remark}

\section{Main results} \setnewcounter
\label{sec:main}

We now present main results, which are proved in Sections \sect{upper} and \sect{lower} using auxiliary lemmas, proved in \sectn{proof}. Throughout this section, we assume that the MMFN model satisfies the stability condition \eq{stability 1}, and denote the stationary distribution of $\vc{X}(t)$ by $\nu$. We derive the main results in two ways. We first use the moment generating function of $\vc{Z}(t)$ under $\nu$, and derive bounds for the tail decay rates from its finite domain. This method is only applicable when the tail set is an envelope of hyperplanes. We secondly use change of measure to present the tail probability under a new measure generated by an exponential martingale. This method is typically used in the study of large deviations, but our problem may be harder because of reflecting mechanism.

Before presenting those two methods, we introduce stationary framework and Palm distributions.

\subsection{Stationary framework}
\label{sec:stationary}

Assume that $\vc{X}(0)$ has the stationary distribution $\nu$, then $\{\vc{X}(t); t \ge 0\}$ is a stationary process and can be extended to $\{\vc{X}(t); t \in \dd{R}\}$ on the whole line. For this stationary process, we introduce a shift operator group $(\Theta_{s} \circ \Theta_{t})(\omega) = \Theta_{s+t}(\omega)$ on the sample space $\Omega$ which operates on $\{\vc{X}(t); t \in \dd{R}\}$ as
\begin{align*}
  (\vc{X}(s) \circ \Theta_{t}) (\omega) = \vc{X}(s+t)(\omega).
\end{align*}
Denote the probability measure for this stationary process by $\dd{P}_{\nu}$, and simply denote $(\vc{Z}(0), J(0))$ under it by $(\vc{Z}, J)$. As usual, $\dd{E}_{\nu}$ denotes expectation concerning $\dd{P}_{\nu}$.

Since $Y_{k}(t)$ has stationary increments under $\dd{P}_{\nu}$, we may view it as a random measure whose increments are jointly stationary with $\vc{X}(\cdot)$. Note the following fact, which is proved in \sectn{Palm 1}.
\begin{lemma}
\label{lem:Palm 1}
(a) $\dd{E}_{\nu}(Y_{k}(1)) = - \dd{E}_{\nu}([R^{-1} \vc{V}(1)]_{k}) = \ol{\mu}_{k} - \ol{\alpha}_{k} > 0$ for all $k \in \sr{K}$.\\
(b) $\dd{P}_{\nu}(\vc{Z}(0) \in S_{k}) > 0$ for all $k \in \sr{K}$.
\end{lemma}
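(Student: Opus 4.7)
The plan for part (a) is to extract $\dd{E}_{\nu}[\vc{Y}(1)]$ from the sample-path reflection identity $\vc{Z}(1) - \vc{Z}(0) = \vc{V}(1) + R\vc{Y}(1)$ without invoking integrability of $\vc{Z}$ itself. I would apply the sample-path chain rule (valid by Lipschitz continuity, \lem{reflection map 1}) to the bounded smoothing $g_{\epsilon}(\vc{z}) = \epsilon^{-1}(1 - e^{-\epsilon z_{k}})$ to obtain
\begin{align*}
g_{\epsilon}(\vc{Z}(1)) - g_{\epsilon}(\vc{Z}(0)) = \int_{0}^{1} e^{-\epsilon Z_{k}(s)} \Bigl(v_{k}(J(s)) + \sum_{\ell \in \sr{K}} r_{k,\ell}\, y_{\ell}(s) \Bigr) ds.
\end{align*}
Since $g_{\epsilon}$ is bounded and $\vc{Z}(0)$ and $\vc{Z}(1)$ share the same law under $\dd{P}_{\nu}$, the expectation of the left-hand side vanishes. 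The right-hand integrand is bounded uniformly in $\epsilon$: one has $0 \le y_{\ell}(s) \le \mu_{\ell}(J(s))$ (because $y_{\ell} = \mu_{\ell}(J) - b_{\ell}$ and $0 \le b_{\ell} \le \mu_{\ell}(J)$ by \lem{b 1}) and the rate coefficients $v_{k},\mu_{\ell}$ take only finitely many values. Dominated convergence as $\epsilon \to 0$, together with stationarity of $J$, then yields $\ol{v}_{k} + \sum_{\ell} r_{k,\ell}\, \dd{E}_{\nu}[Y_{\ell}(1)] = 0$, i.e.\ $\dd{E}_{\nu}[\vc{Y}(1)] = -R^{-1}\ol{\vc{v}}$. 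The identity $R^{-1}\ol{\vc{v}} = \ol{\vc{\alpha}} - \ol{\vc{\mu}}$ recorded in \sectn{stability} turns this into $\dd{E}_{\nu}[Y_{k}(1)] = \ol{\mu}_{k} - \ol{\alpha}_{k}$, and the stability hypothesis \eq{stability 1} is precisely the strict positivity of every entry.

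For part (b), I would argue by contradiction. If $\dd{P}_{\nu}(Z_{k}(0) = 0) = 0$, then Fubini and stationarity give
\begin{align*}
\dd{E}_{\nu}\bigl[\bigl|\{s \in [0,1] : Z_{k}(s) = 0\}\bigr|\bigr] = \int_{0}^{1} \dd{P}_{\nu}(Z_{k}(s) = 0)\, ds = 0,
\end{align*}
so $\dd{P}_{\nu}$-almost surely $Z_{k}(s) > 0$ for Lebesgue-a.e.\ $s \in [0,1]$. The complementarity condition \eq{ZY 1}, rewritten through $Y_{k}(t) = \int_{0}^{t} y_{k}(s)\, ds$ as $\int_{0}^{1} Z_{k}(s) y_{k}(s)\, ds = 0$, then forces $y_{k}(s) = 0$ for a.e.\ $s$ on a full-probability event, so $Y_{k}(1) = 0$ almost surely, contradicting $\dd{E}_{\nu}[Y_{k}(1)] > 0$ from part (a).

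The only genuine obstacle is the integrability issue in part (a): only tightness of $\vc{Z}$ is guaranteed by \lem{stability 1}, so one cannot average the reflection identity in the naive way. The bounded smoothing $g_{\epsilon}$ is tailor-made to sidestep this while still recovering $\dd{E}_{\nu}[Y_{\ell}(1)]$ in the limit; the rest is stationary-distribution bookkeeping and the matrix algebra already set up in \sectn{stability}.
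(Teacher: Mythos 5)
Your proof is correct and follows essentially the same strategy as the paper's: substitute a bounded $C^{1}$ truncation of the coordinate function into the pathwise time-evolution identity (Dynkin's formula \eq{Dynkin 1} specialized to $h\equiv 1$), let stationarity of $\nu$ kill the boundary terms and dominated convergence recover $\dd{E}_{\nu}[\vc{Y}(1)]$, then derive (b) by the same complementarity contradiction. The only cosmetic difference is the choice of truncation --- the paper uses the piecewise-quadratic cut-off $w_{n}$ of \eq{wn 1} while you use the exponential smoothing $g_{\epsilon}(\vc{z})=\epsilon^{-1}(1-e^{-\epsilon z_{k}})$ --- both serving the same purpose of keeping the test function bounded with derivative in $[0,1]$ tending to $1$.
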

Let $m_{k} = \dd{E}_{\nu}(Y_{k}(1))$, then we can define a probability measure on $(\Omega, \sr{F})$ as
\begin{align}
\label{eq:Palm 1}
  \dd{P}_{k}(D) = m_{k}^{-1} \dd{E}_{\nu}\left(\int_{0}^{1} 1_{D} \circ \Theta_{u} Y_{k}(du)\right), \qquad D \in \sr{F}, k \in \sr{K},
\end{align}
which is referred to as a Palm distribution with respect to $Y_{k}(\cdot)$ (e.g., see Chapter 11 of \cite{Kall2001}). Roughly speaking, Palm distribution $\dd{P}_{k}$ presents the probability of events observed at time $t$ which is weighted by the increments of $Y_{k}(t)$, so may be considered as a certain conditional probability measure. Palm distributions are quite useful to derive relationship among various characteristics of queueing models (e.g., see \cite{Miya1994}).

In the next section we will consider the set of moment generating functions. For variable $\vc{\theta} \in \dd{R}^{d}$, define moment generating functions as
\begin{align}
\label{eq:set 1}
 & \varphi(\vc{\theta}) = \dd{E}_{\nu}(e^{\br{\vc{\theta},\vc{Z}}}),\qquad
  \varphi_{k}(\vc{\theta}) = m_{k} \dd{E}_{k}(e^{\br{\vc{\theta},\vc{Z}}}) \quad k \in \sr{K}.
\end{align}
It should be noted that $\varphi_{k}(\vc{\theta})$'s are weighted by $m_{k}$. This make some expressions simpler. For example, $\varphi_{k}(\vc{\theta})$ is simply expressed in term of $\dd{P}_{\nu}$ by
\begin{align*}
  \varphi_{k}(\vc{\theta}) = \dd{E}_{\nu}\left(\int_{0}^{1} e^{\br{\vc{\theta},\vc{Z}(u)}} Y_{k}(du) \right).
\end{align*}
Intuitively, $\varphi_{k}(\vc{\theta})$'s present the influence of reflection at the boundaries. They play important roles to find the domain of $\varphi(\vc{\theta})$ as we will see below.

\subsection{Asymptotics through moment generating functions}
\label{sec:asymptotic 1}

In this subsection, we presents results on the tail asymptotics of the stationary distribution obtained from moment generating functions. Define the domains of $\varphi(\vc{\theta})$ and $\varphi_{k}(\vc{\theta})$ as
\begin{align*}
  \sr{D} = \{\vc{\theta} \in \dd{R}^{d}; \varphi(\vc{\theta}) < \infty\}, \qquad \sr{D}_{k}(\vc{\theta}) = \{\vc{\theta} \in \dd{R}^{d}; \varphi_{k}(\vc{\theta}) < \infty\}, \quad k \in \sr{K}.
\end{align*}

Let $\sr{U}^{d} = \{\vc{x} \in \dd{R}^{d}; \|\vc{x}\| = 1\}$, and let $\sr{U}^{d}_{+} = \{\vc{x} \in \sr{U}^{d}; \vc{x} \ge \vc{0}\}$, which is the set of direction vectors for the tail asymptotics. Obviously, the domain $\sr{D}$ is useful to get the tail decay rates. In particular, for $\vc{c} \in \sr{U}^{d}_{+}$ and $\alpha \ge 0$,
\begin{align}
\label{eq:mgf 1}
  e^{\alpha x} \dd{P}_{\nu}(\br{\vc{c},\vc{Z}} > x) \le \dd{E}_{\nu}(e^{\alpha\br{\vc{c},\vc{Z}}}) = \varphi(\alpha \vc{c}) < \infty \quad \mbox{if} \quad \alpha \vc{c} \in \sr{D}.
\end{align}
This suggests the following lemma, which is proved in \sectn{lem-upper 1}.
\begin{lemma} %\rm
\label{lem:upper 1}
Assume that the MMFN is stable. (a) For $\vc{c} \in \sr{U}^{d}_{+}$,
\begin{align}
\label{eq:mgf 2}
  \limsup_{x \to \infty} \frac 1x \log \dd{P}_{\nu}(\br{\vc{c},\vc{Z}} > x) = - \sup\{\alpha \ge 0; \alpha \vc{c} \in \sr{D}\}.
\end{align}
(b) Let $B_{0}$ be a compact subset of $\dd{R}_{+}^{d}$. For $k \in \sr{K}$ and for $\vc{c} \in \sr{U}^{d}_{+}$,
\begin{align}
\label{eq:upper bound 1}
 & \limsup_{x \to \infty} \frac 1x \log \dd{P}(\vc{Z} \in x \vc{c} + B_{0}) \le - \sup\{\br{\vc{\theta}, \vc{c}}; \vc{\theta} \in \sr{D}\}.
\end{align}
\end{lemma}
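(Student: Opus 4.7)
The plan is a two-sided Chernoff-type argument. Write $\alpha^{*} := \sup\{\alpha \ge 0;\, \alpha \vc{c} \in \sr{D}\}$, so the target in (a) is $-\alpha^{*}$. For the direction ``$\le$'' in (a), I would start from the Chernoff bound \eq{mgf 1}: for any $\alpha \ge 0$ with $\alpha \vc{c} \in \sr{D}$,
\[
  \dd{P}_{\nu}(\br{\vc{c},\vc{Z}} > x) \;\le\; e^{-\alpha x} \varphi(\alpha \vc{c}).
\]
Taking $\frac{1}{x}\log$ and $\limsup_{x \to \infty}$ yields $\limsup \le -\alpha$, and optimizing over $\alpha \in [0,\alpha^{*})$ gives $\limsup \le -\alpha^{*}$. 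The edge cases $\alpha^{*} = 0$ (trivial since probabilities are $\le 1$) and $\alpha^{*} = \infty$ (let $\alpha \to \infty$ to conclude $-\infty$) cause no trouble.

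For the matching lower bound in (a), I would argue by contradiction. Suppose the $\limsup$ equals $-\beta$ with $\beta > \alpha^{*}$, and pick $\alpha^{*} < \alpha'' < \alpha' < \beta$. Then $\dd{P}_{\nu}(\br{\vc{c},\vc{Z}} > x) \le e^{-\alpha' x}$ for all sufficiently large $x$. Since $\vc{c} \ge \vc{0}$ and $\vc{Z} \ge \vc{0}$, the scalar $\br{\vc{c},\vc{Z}}$ is nonnegative, so integration by parts gives
\[
  \varphi(\alpha'' \vc{c}) \;=\; 1 + \alpha'' \int_{0}^{\infty} e^{\alpha'' x}\, \dd{P}_{\nu}(\br{\vc{c},\vc{Z}} > x)\, dx,
\]
which is finite because $\alpha'' < \alpha'$. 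Hence $\alpha'' \vc{c} \in \sr{D}$ with $\alpha'' > \alpha^{*}$, a contradiction. This delivers the equality in \eq{mgf 2}.

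For (b), fix any $\vc{\theta} \in \sr{D}$. On the event $\{\vc{Z} \in x \vc{c} + B_{0}\}$ write $\vc{Z} = x \vc{c} + \vc{b}$ with $\vc{b} \in B_{0}$. Compactness of $B_{0}$ ensures $c_{\vc{\theta}} := \inf_{\vc{b} \in B_{0}} \br{\vc{\theta}, \vc{b}} > -\infty$, so $\br{\vc{\theta}, \vc{Z}} \ge x \br{\vc{\theta}, \vc{c}} + c_{\vc{\theta}}$ on that event. Markov's inequality then yields
\[
  \dd{P}_{\nu}(\vc{Z} \in x \vc{c} + B_{0}) \;\le\; e^{-c_{\vc{\theta}}}\, e^{-x \br{\vc{\theta}, \vc{c}}}\, \varphi(\vc{\theta}),
\]
and taking $\frac{1}{x}\log$, $\limsup_{x \to \infty}$, and finally supremum over $\vc{\theta} \in \sr{D}$ delivers \eq{upper bound 1}.

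Neither part contains a serious obstacle. The only slightly subtle point is the converse direction in (a), which relies on the standard fact that if a nonnegative random variable $Y$ satisfies $\dd{P}(Y > x) \le e^{-\alpha' x}$ eventually, then $\dd{E}(e^{\alpha'' Y}) < \infty$ for every $\alpha'' < \alpha'$, so that the abscissa of convergence of the Laplace transform coincides with $-\limsup \frac{1}{x} \log \dd{P}(Y > x)$. This fact is what turns a one-sided Chernoff bound into the claimed equality.
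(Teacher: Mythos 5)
Your argument is correct and follows essentially the same route as the paper: the Chernoff bound \eq{mgf 1} for the upper estimate, a contradiction argument via the identity $\varphi(\alpha\vc{c}) = 1 + \alpha\int_0^\infty e^{\alpha u}\dd{P}_\nu(\br{\vc{c},\vc{Z}}>u)\,du$ (the paper writes the truncated integration-by-parts on $[0,x]$, but it is the same tail formula) to establish equality, and a Markov bound with a uniform compactness estimate for part~(b). Both the overall strategy and the key technical steps match the paper's proof.
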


To make these results useful, we need to find the domain $\sr{D}$. This is a big problem. To solve this problem, we consider the following fixed point equation for $(d-1)$-dimensional sets $D_{k}$, motivated by the cases for $d=2,3$, which will be discussed in \sectn{motivation}.
\begin{align}
\label{eq:fixed 1}
  D_{k} = \bigcup_{k \in A \subset \sr{K}} \{\vc{\theta}_{\sr{K} \setminus \{k\}} \in \dd{R}^{d-1}; \vc{\theta} \in \overleftarrow{\Gamma}^{-}_{A}, \vc{\theta}_{\sr{K} \setminus \{\ell\}} \in \overleftarrow{D_{\ell}}, \forall \ell \in \sr{K} \setminus A\}, \quad k \in \sr{K}.
\end{align}
Once these $D_{k}$'s are obtained, $\{\vc{\theta} \in \overleftarrow{\Gamma}^{-}; \vc{\theta}_{\sr{K} \setminus \{k\}} \in D_{k}, \forall k \in \sr{K}\}$ is expected to be the domain $\sr{D}$. To find the solution of \eq{fixed 1}, we will apply the following iteration. Let $\overleftarrow{\vc{0}}_{\sr{K} \setminus \{k\}} = \{\vc{\theta}_{\sr{K} \setminus \{k\}} \in \dd{R}^{d-1}; \vc{\theta} < \vc{0}\}$, and let $D^{(0)}_{k} = \overleftarrow{\vc{0}}_{\sr{K} \setminus \{k\}}$ for $k \in \sr{K}$, and inductively define $D^{(n)}_{k}$ for $n \ge 1$ as
\begin{align}
\label{eq:fixed 2}
  D^{(n)}_{k} = \bigcup_{k \in A \subset \sr{K}} \{\vc{\theta}_{\sr{K} \setminus \{k\}} \in \dd{R}^{d-1}; \vc{\theta} \in \overleftarrow{\Gamma}^{-}_{A}, \vc{\theta}_{\sr{K} \setminus \{\ell\}} \in \overleftarrow{D}^{(n-1)}_{\ell}, \forall \ell \in \sr{K} \setminus A\}, \; k \in \sr{K}.
\end{align}
Obviously, $D^{(0)}_{k} \subset D^{(1)}_{k}$ for $\forall k \in \sr{K}$, then the limit of the iterations gives the solution of \eq{fixed 1}. However, for this solution to be meaningful, it must be nontrivial, that is, $D^{(1)}_{k} \ne \overleftarrow{\vc{0}}_{\sr{K} \setminus \{k\}}$ for $\exists k \in \sr{K}$. This should be checked. Including this fact, we present results about the fixed point equation \eq{fixed 1} below. Let
\begin{align*}
  \widetilde{\sr{D}}_{k} = \{\vc{\theta}_{\sr{K} \setminus \{k\}} \in \dd{R}^{d-1}; \varphi_{k}(\vc{\theta}) < \infty\}, \qquad k \in \sr{K}.
\end{align*}

\begin{lemma} \rm
\label{lem:fixed 1}
Assume that the MMFN process is stable. (a) For $D^{(0)}_{k} = \overleftarrow{\vc{0}}_{\sr{K} \setminus \{k\}}$ for $k \in \sr{K}$, $\{D^{(1)}_{k}; k \in \sr{K}\}$ obtained from the iteration \eq{fixed 2} is nontrivial.\\
(b) There is a nontrivial solution $\{D^{(\max)}_{k}; k \in \sr{K}\}$ of \eq{fixed 1} all of whose elements are maximal in the order of set inclusion, and
\begin{align}
\label{eq:Dd 2}
 & D^{(\max)}_{k} \subset \widetilde{\sr{D}}_{k}, \qquad k \in \sr{K},\\
\label{eq:Dd 3}
 & D^{(\max)} \equiv \{\vc{\theta} \in \overleftarrow{\Gamma}^{-}; \vc{\theta}_{\sr{K} \setminus \{k\}} \in D^{(\max)}_{k}, \forall k \in \sr{K}\} \subset \sr{D}.
\end{align}
(c) For $d=2$, $\{D^{(\max)}_{k}; k \in \sr{K}\}$ is a unique solution of \eq{fixed 1} such that $D^{(\max)}_{k} = \widetilde{\sr{D}}_{k}$ for $k=1,2$ and $D^{(\max)} = \sr{D}$.
\end{lemma}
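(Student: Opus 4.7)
My plan is to treat \eq{fixed 1} as the fixed-point problem of a monotone set-valued operator $T$ on tuples of $(d-1)$-dimensional subsets, exploit the geometry of $\Gamma^-$ and $\Gamma^-_A$ developed in \lem{convex 1}, and invoke a Dynkin-derived stationary inequality to pin down the finiteness of moment generating functions.

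For part (a), fix any $j_0 \in \sr{K}$. Stability gives $\ol{\alpha}_{j_0} < \ol{\mu}_{j_0}$, so by \lem{convex 1}(d) there exists a nonzero $\vc{\theta}_0 \in \sr{R}_{j_0} \cap \Gamma^-$. By definition of $\sr{R}_{j_0}$, $\gamma_\ell(\vc{\theta}_0) = 0$ for $\ell \ne j_0$, and inverting $\vc{\theta}_0^{\rs{t}} R = (\gamma_1(\vc{\theta}_0),\ldots,\gamma_d(\vc{\theta}_0))$, together with nonnegativity of $R^{-1}$ and $\vc{\theta}_0 \ge \vc{0}$, $\vc{\theta}_0 \ne \vc{0}$, forces $\gamma_{j_0}(\vc{\theta}_0) > 0$ and $\theta_{0,j_0} > 0$. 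Using \lem{gamma 1}(b), a small linear perturbation that lowers $\gamma_\ell$ from $0$ to $-\epsilon$ for each $\ell \ne j_0$ produces $\vc{\theta}' \in \Gamma^-_{\sr{K}\setminus\{j_0\}}$ with $\theta'_{j_0} > 0$ still. Now, for any $k \ne j_0$, choose $A = \sr{K}\setminus\{j_0\}$ in \eq{fixed 2} and let $\vc{\theta}$ agree with $\vc{\theta}'$ at index $j_0$ (so $\theta_{j_0} > 0$) but satisfy $\theta_j \ll 0$ for $j \ne j_0$. Then $\vc{\theta} < \vc{\theta}'$, so $\vc{\theta} \in \overleftarrow{\Gamma}^-_A$, and for the unique $\ell \notin A$, namely $j_0$, one has $\vc{\theta}_{\sr{K}\setminus\{j_0\}} \in \overleftarrow{D}^{(0)}_{j_0} = D^{(0)}_{j_0}$. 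The projection $\vc{\theta}_{\sr{K}\setminus\{k\}}$ has a nonneg entry at $j_0$, so it lies in $D^{(1)}_k \setminus D^{(0)}_k$.

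For part (b), the operator $T$ defined by the right-hand side of \eq{fixed 1} is monotone non-decreasing with respect to componentwise inclusion, because enlarging $D_\ell$ only enlarges $\overleftarrow{D_\ell}$ and hence relaxes the constraints. Since the product lattice of subset tuples is complete, Tarski's fixed point theorem provides a greatest fixed point $\{D^{(\max)}_k\}$, which is nontrivial by part (a) and convex by \lem{convex 1}(a). The inclusion \eq{Dd 2} is proved inductively along the iteration started from an upper bound: from \lem{Dynkin 1} applied with the test function \eq{test 1} at $\vc{h} = \vc{h}_{\vc{\theta}}$, together with \eq{A 3} and stationarity, one derives a stationary inequality of the form $\gamma(\vc{\theta}) \varphi(\vc{\theta}) + \sum_{\ell \in \sr{K}} \gamma_\ell(\vc{\theta}) \varphi_\ell(\vc{\theta}) \le 0$ in a suitable truncated sense. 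For $\vc{\theta} \in \overleftarrow{\Gamma}^-_A$ the $\gamma_\ell$ terms with $\ell \in A$ are negative and $\gamma(\vc{\theta}) \varphi(\vc{\theta})$ is nonpositive, while the inductive hypothesis $\vc{\theta}_{\sr{K}\setminus\{\ell\}} \in \overleftarrow{D}^{(n-1)}_\ell \subset \widetilde{\sr{D}}_\ell$ for $\ell \notin A$ keeps the remaining terms finite and yields $\varphi_k(\vc{\theta}) < \infty$. The inclusion \eq{Dd 3} of $D^{(\max)}$ into $\sr{D}$ follows by applying the same inequality once every $\varphi_\ell(\vc{\theta})$ is known finite.

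For part (c), when $d=2$ each $D^{(\max)}_k \subset \dd{R}$ is convex and contains $(-\infty,0)$, hence is a half-line; \eq{fixed 1} then reduces to two coupled scalar equations for the endpoints, and a direct monotonicity argument combined with the stationary inequality above and \lem{upper 1} pins these endpoints down uniquely as those of $\widetilde{\sr{D}}_k$, yielding also $D^{(\max)} = \sr{D}$. The main obstacle throughout is \eq{Dd 2}: matching the union-over-$A$ structure of \eq{fixed 1} with the sign pattern of the Dynkin-derived stationary inequality, so that each passage of the iteration truly preserves finiteness of the boundary moment generating functions, is the technical heart of the argument and the place where the test-function choice $\vc{h}_{\vc{\theta}}$ pays off.
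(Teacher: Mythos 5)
Your proof of part (a) is correct and takes a genuinely different route from the paper. The paper reduces to the two-dimensional case on the slice $\vc{\theta}^{\ell,\ell'}$ via \lem{marginal 2}(b) and then invokes part (c), whereas you exploit the geometry of \lem{convex 1}(d) and the nonnegativity of $R^{-1}$ to directly manufacture a point of $D^{(1)}_{k}\setminus D^{(0)}_{k}$ for every $k\ne j_0$. Your version is more elementary, self-contained (no appeal to (c) or to \lem{marginal 2}), and works uniformly for all $d\ge 2$ rather than treating $d=2$ separately. One small repair: with the paper's definition of $\overleftarrow{\ }$ by strict coordinatewise inequality, $\vc{\theta}$ cannot agree exactly with $\vc{\theta}'$ at $j_0$; take $\theta_{j_0}$ slightly smaller than $\theta'_{j_0}$ but still positive.

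Part (b), however, has a real gap. Tarski's theorem yields a greatest fixed point equal to the supremum of all \emph{post-}fixed tuples, and the fact that $\widetilde{\sr{D}}_k$ is a \emph{pre-}fixed tuple (which is essentially what \lem{marginal 2}(b) gives) only shows that iterating downward from $\widetilde{\sr{D}}_k$ reaches the largest fixed point that lies below $\widetilde{\sr{D}}_k$; it does not show that the Tarski greatest fixed point is itself contained in $\widetilde{\sr{D}}_k$. Your phrase ``the iteration started from an upper bound'' does not resolve this, because the relevant upper bound in Tarski's construction is the full space, not $\widetilde{\sr{D}}_k$. The paper's route is to iterate \emph{upward} from the trivial tuple — which lies in $\widetilde{\sr{D}}_k$ automatically since $\vc{\theta}_{\sr{K}\setminus\{k\}}<\vc{0}$ gives $\varphi_k(\vc{\theta})\le m_k$ — applying \lem{marginal 2}(b) at each finite stage, and then to treat maximality by a separate union-and-iterate construction over all solutions. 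You need one of these anchors. A second slip: the stationary inequality you write, $\gamma(\vc{\theta})\varphi(\vc{\theta}) + \sum_{\ell}\gamma_\ell(\vc{\theta})\varphi_\ell(\vc{\theta})\le 0$, has the wrong sign; \eq{BAR 2} rearranges to $\gamma(\vc{\theta})\psi(\vc{\theta})+\sum_{\ell}\gamma_\ell(\vc{\theta})\psi_\ell(\vc{\theta})\ge 0$, and the direction is precisely what allows Fatou's lemma to bound the unknown terms by the known ones. You should cite \lem{marginal 2} (and \lem{marginal 1} for passing between $\psi$ and $\varphi$) rather than re-derive the inequality, since the truncation by $w_n$ and the use of the Perron eigenvector $\vc{h}_{\vc{\theta}}$ are not routine.

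Part (c) in the paper is a direct citation of \lem{domain 2d}; your monotonicity sketch is compatible but would in the end route through the same two-dimensional domain characterization.
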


This lemma is proved in \sectn{fixed 1}. We conjecture that $D^{(\max)} = \sr{D}$ for $d \ge 3$, but have not yet proved it. So, we content to use $D^{(\max)}$ in place for $\sr{D}$ for $d \ge 3$ in this paper.

\begin{theorem} %\rm
\label{thr:upper 1}
Assume that the MMFN process satisfies (\sect{MMFN}a), (\sect{MMFN}b) and the stability condition \eq{stability 1}. Let $D^{(\max)}$ be the set obtained in \lem{fixed 1}. (a) For $\vc{c} \in \sr{U}^{d}_{+}$,
\begin{align}
\label{eq:upper 1}
  \limsup_{x \to \infty} \frac 1x \log \dd{P}_{\nu}(\br{\vc{c},\vc{Z}} > x) \le - \sup\{\alpha \ge 0; \alpha \vc{c} \in D^{(\max)}\}.
\end{align}
(b) Let $B_{0}$ be a compact subset of $\dd{R}_{+}^{d}$. For $k \in \sr{K}$ and for $\vc{c} \in \sr{U}^{d}_{+}$,
\begin{align}
\label{eq:upper 2}
 & \limsup_{x \to \infty} \frac 1x \log \dd{P}(\vc{Z} \in x \vc{c} + B_{0}) \le - \sup\{\br{\vc{\theta}, \vc{c}}; \vc{\theta} \in D^{(\max)}\}.
\end{align}
(c) In particular, for $d=2$, $\sr{D} = D^{(\max)}$ and
\begin{align}
\label{eq:upper 3}
 & \lim_{x \to \infty} \frac 1x \log \dd{P}(\vc{Z} \in x \vc{e}_{k} + B_{0}) = - \sup\{\theta_{k}; \vc{\theta} \in D^{(\max)}\}, \qquad k=1,2,\\
\label{eq:upper 4}
 & \lim_{x \to \infty} \frac 1x \log \dd{P}_{\nu}(\br{\vc{c},\vc{Z}} > x) = - \sup\{\alpha \ge 0; \alpha \vc{c} \in D^{(\max)}\}.
\end{align}
\end{theorem}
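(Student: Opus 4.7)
\textbf{Proof plan for Theorem~\ref{thr:upper 1}.} The statement turns out to be essentially a corollary of the two preceding lemmas (\lem{upper 1} and \lem{fixed 1}), so the bulk of the work is already packaged inside those lemmas. The plan is to identify how each piece feeds into each part of the theorem.

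For parts (a) and (b), the strategy is to chain \lem{upper 1} with the inclusion $D^{(\max)} \subset \sr{D}$ from \lem{fixed 1}(b). Concretely, \lem{upper 1}(a) gives the sharp identity
\begin{align*}
  \limsup_{x \to \infty} \frac 1x \log \dd{P}_{\nu}(\br{\vc{c},\vc{Z}} > x) = - \sup\{\alpha \ge 0; \alpha \vc{c} \in \sr{D}\},
\end{align*}
whose $\le$ direction is a Chernoff-type bound using $e^{\alpha x}\dd{P}_{\nu}(\br{\vc{c},\vc{Z}} > x) \le \varphi(\alpha\vc{c})$ and whose $\ge$ direction follows from the defining property of $\sr{D}$. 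Since $D^{(\max)} \subset \sr{D}$, any $\alpha \ge 0$ with $\alpha\vc{c} \in D^{(\max)}$ lies in $\{\alpha \ge 0; \alpha\vc{c} \in \sr{D}\}$, so $\sup\{\alpha \ge 0; \alpha\vc{c} \in D^{(\max)}\} \le \sup\{\alpha \ge 0; \alpha\vc{c} \in \sr{D}\}$; negating yields \eq{upper 1}. Part (b) is obtained in exactly the same way from \lem{upper 1}(b) together with the fact that $\sup\{\br{\vc{\theta},\vc{c}}; \vc{\theta} \in D^{(\max)}\} \le \sup\{\br{\vc{\theta},\vc{c}}; \vc{\theta} \in \sr{D}\}$.

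For part (c), the key input is \lem{fixed 1}(c), which asserts that for $d=2$ the maximal solution of the fixed point equation \eq{fixed 1} actually equals $\sr{D}$, i.e.\ $D^{(\max)}_{k} = \widetilde{\sr{D}}_{k}$ for $k=1,2$ and $D^{(\max)} = \sr{D}$. Given this equality, the upper bounds of parts (a) and (b) become tight in the sense that they use the true MGF domain. The matching lower bounds needed to upgrade $\limsup$ to $\lim$ in \eq{upper 3} and \eq{upper 4} then come from the companion result \thr{lower 1}, which supplies the lower-bound direction via the change-of-measure martingale argument described in the introduction; in the two-dimensional case these upper and lower bounds coincide, yielding the stated limits.

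The only substantive step inside this proof is the passage from \lem{upper 1} to the explicit characterization via $D^{(\max)}$, which is entirely handled by \lem{fixed 1}. The main obstacle is therefore \emph{not} inside \thr{upper 1} itself but in \lem{fixed 1}(b): one must show that the iteration \eq{fixed 2} starting from $D^{(0)}_{k} = \overleftarrow{\vc{0}}_{\sr{K}\setminus\{k\}}$ stays inside $\widetilde{\sr{D}}_{k}$, and that the limiting set $D^{(\max)}$ is contained in $\sr{D}$. The envisaged mechanism is to run the Dynkin's formula \eq{Dynkin 1} with the test function $f_{\vc{\theta}}$ from \eq{test 1} under the stationary distribution, derive the stationary equation relating $\gamma(\vc{\theta})\varphi(\vc{\theta})$ to a linear combination $\sum_{k}\gamma_{k}(\vc{\theta})\varphi_{k}(\vc{\theta})$, and then use nonnegativity together with the sign conditions defining $\overleftarrow{\Gamma}^{-}_{A}$ to propagate finiteness inductively. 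Once this is in place, \thr{upper 1} is just the bookkeeping above.
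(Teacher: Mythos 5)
Your treatment of parts (a) and (b) matches the paper exactly: combine \lem{upper 1} with the inclusion $D^{(\max)} \subset \sr{D}$ from \lem{fixed 1}(b), then observe that shrinking the set over which the supremum is taken can only raise the decay rate on the right-hand side. That is precisely how the paper argues.

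Part (c) is where your proposal diverges from the paper and where a genuine gap appears. You propose to obtain the matching lower bounds by invoking \thr{lower 1}, but \thr{lower 1} does not cover all the directions that \eq{upper 3} and \eq{upper 4} assert. Specifically, \eq{lower 2} in \thr{lower 1}(b) is only stated for $\vc{c} \in {\rm Corn}(\overleftarrow{\Gamma}^{-} \cap \partial\Gamma^{-})$, and \rem{lower 1} makes exactly this point: the change-of-measure lower bound agrees with the exact decay rate of \eq{upper 4} \emph{only for those $\vc{c}$}, and it agrees with \eq{upper 3} only under the additional side condition $\theta_2 \le \alpha_2$ for $\vc{\theta} \in G_1$. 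In the two-dimensional problem, directions $\vc{c}$ whose decay rate is governed by the boundary segments $\theta_1 = \alpha_1$ or $\theta_2 = \alpha_2$ of $\sr{D}$ (rather than by $\partial\Gamma^-$ itself) are not handled by the change-of-measure argument, so for such $\vc{c}$ the lower bound from \thr{lower 1} is strictly weaker than what \eq{upper 4} requires. The paper instead proves (c) directly from the stationary (in)equalities of \lem{marginal 1} and \lem{marginal 2}, following the BAR-based analysis of two-dimensional SRBM in Dai--Miyazawa (2011, 2013) together with \lem{domain 2d}; that route produces the exact logarithmic asymptotics in every direction, including those governed by the boundary measures. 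To repair your argument you would need to supply, for those exceptional directions, the corresponding lower bound via the BAR machinery rather than via \thr{lower 1}.

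Your final paragraph (sketching why \lem{fixed 1}(b) holds via Dynkin's formula and the stationary equation) is in the right spirit, but note that the paper's actual mechanism goes through the moment generating functions $\psi,\psi_k$ of \eq{set 2} (with the Perron--Frobenius eigenvector as weight) and the equivalence in \lem{marginal 1}, not through $\varphi,\varphi_k$ directly; the Markov modulation requires this detour. Since you flag \lem{fixed 1} as the substantive obstacle rather than attempting to reprove it, this is a minor point of emphasis rather than a gap in the proof of \thr{upper 1} itself.
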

This theorem is proved in \sectn{upper 1}.

\subsection{Asymptotics through change of measure}
\label{sec:asymptotic 2}

We next presents results on the tail asymptotics obtained from change of measure. In principle, this change of measure approach can be used for any tail set and both of upper and lower bounds. However, the upper bounds which can be obtained from the change of measure formula \eq{exp 1} are essentially the same as those obtained in \thr{upper 1} as discussed in the end of \sectn{tail} (see \lem{W 1}). So, we here focus on lower bounds. For $B \subset \dd{R}_{+}^{d}$, define a convex corn as
\begin{align*}
  {\rm Corn}(B) = \{\vc{x} \in \dd{R}_{+}^{d}; u \vc{x} \in B, \exists u > 0\},
\end{align*}
and, for $k \in \sr{K}$, define
\begin{align*}
  G_{k} = \Big\{\vc{\theta} \in \Gamma^{+}_{\sr{K} \setminus \{k\}}; \frac {\partial} {\partial \theta_{\ell}} \gamma(\vc{\theta}) < 0, \ell \in \sr{K} \setminus \{k\}, [R^{-1} \nabla \gamma(\vc{\theta})]_{k} > 0\Big\}.
\end{align*}

\begin{theorem} %\rm
\label{thr:lower 1}
Under the same assumptions as in \thr{upper 1},\\
(a) For $k \in \sr{K}$ and a compact set $B_{0} \in \dd{R}_{+}^{d}$,
\begin{align}
\label{eq:lower 1}
 & \liminf_{x \to \infty} \frac 1x \log \dd{P}(\vc{Z} \in x \vcn{e}_{k} + B_{0}) \ge - \inf\{\theta_{k} \ge 0; \vc{\theta} \in G_{k}\},
\end{align}
(b) For $\vc{c} \in \sr{U}^{d}_{+}$ if $\vc{c} \in {\rm Corn}(\overleftarrow{\Gamma}^{-} \cap \partial \Gamma^{-})$,
\begin{align}
\label{eq:lower 2}
 & \liminf_{x \to \infty} \frac 1x \log \dd{P}(\br{\vc{c},\vc{Z}} > x) \ge - \inf\big\{u \ge 0; u \vc{c} \in \overrightarrow{\Gamma}^{+} \big\}.
\end{align}
\end{theorem}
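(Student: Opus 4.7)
The plan is to prove both parts by constructing an exponential martingale from Dynkin's formula, performing a change of measure that makes the tilted process drift ballistically in the direction of the event, and then inverting the Radon-Nikodym derivative. From \lem{Dynkin 1} and \eq{A 3}, an integration by parts shows that for each $\vc{\theta} \in \dd{R}^{d}$ the process
\begin{align*}
  \mathcal{M}_{\vc{\theta}}(t) := \frac{h_{\vc{\theta}}(J(t))}{h_{\vc{\theta}}(J(0))} \exp\!\left(\br{\vc{\theta},\vc{Z}(t)-\vc{Z}(0)} - \gamma(\vc{\theta}) t - \sum_{\ell \in \sr{K}} \gamma_{\ell}(\vc{\theta}) Y_{\ell}(t)\right)
\end{align*}
is a nonnegative local $\dd{F}$-martingale with $\mathcal{M}_{\vc{\theta}}(0)=1$. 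I would first verify that it is a genuine martingale on each $[0,T]$ by localizing at exit times from large compact sets, using that $h_{\vc{\theta}}$ is bounded away from $0$ and $\infty$. Then define the tilted measure $\tilde{\dd{P}}^{(\vc{\theta})}_{T}$ on $\sr{F}_{T}$ by $d\tilde{\dd{P}}^{(\vc{\theta})}_{T}/d\dd{P}_{\nu} = \mathcal{M}_{\vc{\theta}}(T)$. Differentiating the eigenvalue equation $K(\vc{\theta})\vc{h}_{\vc{\theta}} = \gamma(\vc{\theta})\vc{h}_{\vc{\theta}}$ identifies the $J$-averaged drift of $\vc{V}$ under $\tilde{\dd{P}}^{(\vc{\theta})}$ as $\nabla\gamma(\vc{\theta})$, so under the tilt the MMFN process has the same reflection matrix $R$ but new tilted background generator and modified mean net drift $\nabla\gamma(\vc{\theta})$.

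For part (a), fix $\vc{\theta} \in G_{k}$. The defining conditions of $G_{k}$ are tailored so that under $\tilde{\dd{P}}^{(\vc{\theta})}$, stations $\ell \neq k$ are stable (from $\partial_{\ell}\gamma(\vc{\theta}) < 0$ together with the tilted traffic equations \eq{traffic 2} and the implication used in \lem{stability 3}), while station $k$ is unstable with growth rate $r := [R^{-1}\nabla\gamma(\vc{\theta})]_{k} > 0$ by \lem{unstable 1}. A fluid-scaling argument under $\tilde{\dd{P}}^{(\vc{\theta})}$ then gives $\vc{Z}(t)/t \to r\vcn{e}_{k}$, $Y_{k}(t)/t \to 0$, and $Y_{\ell}(t)/t \to y^{*}_{\ell}$ for $\ell \neq k$, with the fluid-limit identity $r\vcn{e}_{k} = \nabla\gamma(\vc{\theta}) + R\vc{y}^{*}$. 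Choosing $T = T(x) := x/r$, the event $\{\vc{Z}(T) \in x\vcn{e}_{k} + B_{0}\}$ has $\tilde{\dd{P}}^{(\vc{\theta})}$-probability bounded away from $0$ once $B_{0}$ is a sufficiently large compact neighborhood of the origin in the coordinates $\sr{K}\setminus\{k\}$. On this event, using $\br{\vc{\theta},R\vc{y}^{*}} = r\theta_{k} - \br{\vc{\theta},\nabla\gamma(\vc{\theta})}$ and the fluid identity, the exponent in $\mathcal{M}_{\vc{\theta}}(T)^{-1}$ simplifies to $-\theta_{k} x + o(x)$. By stationarity of $\dd{P}_{\nu}$,
\begin{align*}
  \dd{P}_{\nu}(\vc{Z} \in x\vcn{e}_{k} + B_{0}) = \tilde{\dd{E}}^{(\vc{\theta})}\!\left[\mathcal{M}_{\vc{\theta}}(T)^{-1} \mathbf{1}_{\{\vc{Z}(T)\in x\vcn{e}_{k}+B_{0}\}}\right] \ge e^{-\theta_{k} x + o(x)},
\end{align*}
and infimizing over $\vc{\theta} \in G_{k}$ gives \eq{lower 1}.

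For part (b), the hypothesis $\vc{c} \in {\rm Corn}(\overleftarrow{\Gamma}^{-} \cap \partial\Gamma^{-})$ means that for each $u > 0$ with $u\vc{c} \in \overrightarrow{\Gamma}^{+}$ there is a $\vc{\theta} \in \Gamma^{+}$ near $u\vc{c}$ such that the tilted network drifts ballistically in a direction $\vc{w}$ close to the ray $\dd{R}_{+}\vc{c}$. Applying the same construction with this $\vc{\theta}$, choosing $T$ so that $\br{\vc{c},\vc{Z}(T)} \approx x$, and letting $\vc{\theta} \to u\vc{c}$ at the end, the exponent in $\mathcal{M}_{\vc{\theta}}(T)^{-1}$ on the fluid-typical subevent of $\{\br{\vc{c},\vc{Z}(T)} > x\}$ becomes $-u x + o(x)$, and infimizing over admissible $u$ yields \eq{lower 2}.

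The main obstacle will be the fluid-limit analysis under the tilted measure. Verifying rigorously that $\mathcal{M}_{\vc{\theta}}$ is a true martingale demands careful localization because $\vc{Y}(t)$ grows linearly, and identifying the tilted fluid network's stability pattern (stability of stations $\ell \neq k$, ballistic growth in direction $r\vcn{e}_{k}$ in part (a) and in direction $\vc{c}$ in part (b)) requires re-running the nonlinear traffic equation analysis \eq{traffic 2} in the tilted model — this is where the full force of the defining inequalities of $G_{k}$ and of ${\rm Corn}(\overleftarrow{\Gamma}^{-} \cap \partial\Gamma^{-})$ is consumed. The cleanest technical moment is the cancellation producing the exponent $-\theta_{k} x$ (respectively $-u x$), which comes entirely from the fluid-limit identity $\vc{w} = \nabla\gamma(\vc{\theta}) + R\vc{y}^{*}$ and the linearity of $\gamma_{\ell}$.
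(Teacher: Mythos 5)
Your proposal shares the paper's core idea (tilt by the exponential martingale built from the Perron--Frobenius eigenvector $\vc{h}_{\vc{\theta}}$, identify the tilted background drift via $\nabla\gamma(\vc{\theta})$, and then exploit the tilted stability/instability pattern encoded by the conditions defining $G_{k}$), but your execution diverges from the paper's in two structural ways. The paper does not evaluate the Radon--Nikodym derivative at a deterministic time; instead it uses the Palm inversion (cycle) formula \eq{cycle 1} over an excursion from the boundary set $S_{A}$, and the stopping time $\tau_{x} = \inf\{t \ge 0 : \vc{Z}(t) \in C(x)\}$, arriving at the identity \eq{exp 1}. The boundary set $A$ is chosen so that $Y_{k}(\tau_{x}) = 0$ for $k \in A$ on $\{\tau_{x} < \tau^{\re}_{A}\}$, and the residual term $W_{A}(\tau_{x})$ is bounded by \lem{W 1}. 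Since at $\tau_{x}$ the process sits exactly on $\partial C(x)$, there is no fluctuation to control; the remaining requirement (\sect{lower}c) is only that $\widetilde{\dd{P}}(\tau_{x} < \tau^{\re}_{A}) > 0$, which follows once the tilted dynamics make station $k$ (resp.\ all stations) transient. Your approach replaces all of this machinery with the bare stationarity identity $\dd{P}_{\nu}(\vc{Z}(0)\in C(x)) = \dd{P}_{\nu}(\vc{Z}(T)\in C(x))$ at a deterministic $T=T(x)$, which is a genuinely different — and in principle simpler — route, one that never invokes the Palm calculus or \lem{W 1}.

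Your route, however, has a concrete gap where the paper's hitting-time formulation was designed to avoid one. You claim the event $\{\vc{Z}(T)\in x\vcn{e}_{k}+B_{0}\}$ has tilted probability ``bounded away from $0$ once $B_{0}$ is a sufficiently large compact neighborhood of the origin in the coordinates $\sr{K}\setminus\{k\}$.'' But $B_{0}$ is part of the statement and cannot be enlarged, and more importantly the event also localizes the $k$-th coordinate: it forces $Z_{k}(T)-x$ into a compact set. Since $Z_{k}(T)$ fluctuates around $rT=x$ on scale $\sqrt{T}$, this localization probability decays polynomially in $x$ and is not bounded away from $0$; the argument is rescuable because polynomial decay contributes $o(x)$ to the log asymptotics, but you must say so explicitly rather than assert boundedness. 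Relatedly, the exponent in $\mathcal{M}_{\vc{\theta}}(T)^{-1}$ does not ``simplify to $-\theta_{k}x+o(x)$''; after the exact cancellation of the boundary terms it equals $-\br{\vc{\theta},\vc{V}(T)} + \gamma(\vc{\theta})T$, whose fluid limit is $T\bigl(\gamma(\vc{\theta})-\br{\vc{\theta},\nabla\gamma(\vc{\theta})}\bigr)$, and one needs the one-sided inequality $\ge -\theta_{k}x+o(x)$, obtained from $\gamma(\vc{\theta})\ge 0$, $\gamma_{\ell}(\vc{\theta})\ge 0$ for $\ell\ne k$, $y^{*}_{\ell}\ge 0$, and $y^{*}_{k}=0$ (which in turn requires proving $Y_{k}(T)=O(1)$ because station $k$ is transient under the tilt). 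Also, your part (b) invokes a tilted drift ``close to the ray $\dd{R}_{+}\vc{c}$,'' which is not what is used or needed; what is needed is only $\nabla\gamma(\vc{\theta})\ge 0$ (so all stations drift out), so that $\br{\vc{c},\vc{Z}(T)}$ grows linearly at rate $\br{\vc{c},\nabla\gamma(\vc{\theta})}>0$.
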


This theorem is proved in \sectn{lower 1}.

\begin{remark}
\label{rem:lower 2}
(a) By \lem{upper 1}, \eq{lower 2} implies that $\sr{D} \subset \overleftarrow{\Gamma}^{-}$.\\
(b) The decay rate in the $k$-th coordinate direction, $- \lim_{x \to \infty} \frac 1x \log \dd{P}(Z_{k} > x)$, is infinite when the buffer of station $k$ is always empty under the stationary distribution. This occurs if $v_{k}(i) < 0$ for all $i \in \sr{J}$. 
In this case, by (a),  the convex set $\overleftarrow{\Gamma}^{-}$ is unbounded in the direction of $k$-th coordinate direction.
\end{remark}
\begin{remark}\rm
\label{rem:lower 1}
For $d = 2$, \eq{lower 2} is identical with the exact decay rate of \eq{upper 4} for $\vc{c} \in {\rm Corn}(\overleftarrow{\Gamma}^{-} \cap \partial \Gamma^{-})$. This is also the case for \eq{lower 1} and \eq{upper 3} for $k=1$ if $\theta_{2} \le \alpha_{2}$ for $\vc{\theta} \in G_{1}$ in \eq{lower 1}, where $\vc{\alpha}$ is the solution of \eq{cp 1a} in \sectn{motivation} below.
\end{remark}

For $d \ge 3$, there is very little known about the tail asymptotics of the stationary distribution even for simple reflecting processes like a reflecting random walk and SRBM.  There are several studies in the framework of sample path large deviations (e.g., see \cite{DupuElli1997,FreiWent1998}). In particular, local rate functions are explicitly obtained for some simple networks such as the Jackson network (e.g., see \cite{Igna2000,Igna2005}). Those results are applicable for various asymptotic problems, but, for the tail decay rates, local rate functions are not sufficient. We need to compute large deviations rate functions from them. This is the problem to find an optimal path in a multidimensional orthant, which is not easy, and requires considerable effort even for the two dimensional case (see \cite{AvraDaiHase2001}). There are some numerical studies, but still complicated (e.g., see \cite{Maje2009}). Compared with them, the bounds in Theorems \thrt{upper 1} and \thrt{lower 1} may be more tractable.

\section{Stationary formulas and the proof of \thr{upper 1}} \setnewcounter
\label{sec:upper}

A main subject of this section is to prove \lem{fixed 1}, from which \thr{upper 1} is proved. 

\subsection{Stationary inequality}
\label{sec:inequality}

\lem{fixed 1} is essentially results for moment generating functions $\varphi(\vc{\theta})$ and $\varphi_{k}(\vc{\theta})$. To consider them, we derive a stationary equation for $\vc{X}(t)$ using an appropriate test function. For this, we use Dynkin's formula \eq{Dynkin 1} with the test function \eq{test 1}. However, this formula are not for $\varphi(\vc{\theta})$ and $\varphi_{k}(\vc{\theta})$ because it includes the background process $J(\cdot)$. So, we consider another set of moment generating functions.
\begin{align}
\label{eq:set 2}
 & \psi(\vc{\theta}) = \dd{E}_{\nu}(e^{\br{\vc{\theta},\vc{Z}}} h(J)),\qquad
  \psi_{k}(\vc{\theta}) = m_{k} \dd{E}_{k}(e^{\br{\vc{\theta},\vc{Z}}} h(J)) \quad k \in \sr{K}.
\end{align}
This is inconvenient for proving \lem{fixed 1}, but unavoidable. However, fortunately, this second set of moment generating functions is equivalent to the set of $\varphi(\vc{\theta})$ and $\varphi_{k}(\vc{\theta})$, which is referred to as the first set, concerning their finiteness as summarized in the following lemma.
\begin{lemma}\rm
\label{lem:marginal 1}
(a) For $\vc{\theta} \in \dd{R}^{d}$, $\varphi(\vc{\theta}) < \infty$ if and only if $\psi(\vc{\theta}) < \infty$.\\
(b) For $\vc{\theta} \in \dd{R}^{d}$ and $k \in \sr{K}$, the following three finiteness conditions are equivalent.
\begin{align*}
  \varphi_{k}(\vc{\theta}) < \infty, \qquad \psi_{k}(\vc{\theta}) < \infty, \qquad \dd{E}_{\nu}\left( e^{\br{\vc{\theta},\vc{Z}}} 1(\vc{Z} \in S_{k}) \right) < \infty.
\end{align*}
\end{lemma}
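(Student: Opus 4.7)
The plan is to establish both parts by sandwich arguments, exploiting two elementary facts: the positive function $h$ is bounded above and below by strictly positive constants on the finite state space $\sr{J}$, and the regulator intensity $y_{k}(\cdot)$ is bounded above by $\mu_{k}^{\max} \equiv \max_{j \in \sr{J}} \mu_{k}(j)$ and vanishes off $\{Z_{k}=0\}$ by \lem{b 1}.

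For part (a), I pick $0 < c_{1} \le c_{2}$ with $c_{1} \le h(j) \le c_{2}$ for every $j \in \sr{J}$; multiplying through by $e^{\br{\vc{\theta},\vc{Z}}}$ and taking $\dd{E}_{\nu}$-expectation gives $c_{1} \varphi(\vc{\theta}) \le \psi(\vc{\theta}) \le c_{2} \varphi(\vc{\theta})$, so the two finiteness conditions coincide. For part (b), the equivalence $\varphi_{k}(\vc{\theta}) < \infty \Leftrightarrow \psi_{k}(\vc{\theta}) < \infty$ is the same sandwich applied pointwise inside the Palm expectation $\dd{E}_{k}[\cdot]$. To relate $\varphi_{k}$ with $\dd{E}_{\nu}[e^{\br{\vc{\theta},\vc{Z}}} 1(\vc{Z}\in S_{k})]$, I use the Palm--Campbell representation $\varphi_{k}(\vc{\theta}) = \dd{E}_{\nu}\bigl[\int_{0}^{1} e^{\br{\vc{\theta},\vc{Z}(u)}} Y_{k}(du)\bigr]$ together with stationarity and Fubini. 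The estimate $0 \le y_{k}(u) \le \mu_{k}^{\max}\, 1(\vc{Z}(u) \in S_{k})$ then yields $\varphi_{k}(\vc{\theta}) \le \mu_{k}^{\max}\, \dd{E}_{\nu}[e^{\br{\vc{\theta},\vc{Z}}} 1(\vc{Z}\in S_{k})]$, so finiteness of the third quantity implies finiteness of $\varphi_{k}$.

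For the reverse direction, the strategy is to show that under $\dd{P}_{\nu}$ the set $\{\vc{Z} \in S_{k},\ y_{k} = 0\}$ is $\nu$-negligible, so that $1(\vc{Z} \in S_{k}) \le y_{k}/c_{k}$ $\nu$-a.s.\ for some $c_{k} > 0$; integrating then gives $\dd{E}_{\nu}[e^{\br{\vc{\theta},\vc{Z}}} 1(\vc{Z}\in S_{k})] \le c_{k}^{-1} \varphi_{k}(\vc{\theta})$. On the interior of the face $S_{k}$ the boundary identity simplifies to $y_{k} = (-v_{k}(J))^{+}$, so $y_{k}=0$ forces $v_{k}(J) \ge 0$. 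If $v_{k}(j) > 0$, then $Z_{k}(\cdot)$ leaves the boundary immediately whenever $J=j$, so $\{Z_{k}=0, J=j\}$ has zero Lebesgue sojourn time and hence $\nu$-measure zero. The borderline case $v_{k}(j)=0$ and the multi-face intersections $S_{k} \cap S_{\ell}$ are treated by cascading the identity $y_{k} = \sum_{\ell \ne k} p_{\ell,k} y_{\ell} - v_{k}(J)$ and using the irreducibility of $Q$ together with that of $\ol{P}$ (conditions (\sect{MMFN}a), (\sect{MMFN}b)) to rule out any recursive vanishing pattern.

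The main obstacle is this last step: establishing a uniform lower bound $y_{k} \ge c_{k}$ $\nu$-a.s.\ on $\{\vc{Z} \in S_{k}\}$. The single-face drift argument is straightforward, but the cascading analysis on the multi-face intersections requires careful bookkeeping of which boundary regulators can simultaneously vanish, which is a genuinely multidimensional feature of the MMFN reflection dynamics and where I expect the proof to become most delicate.
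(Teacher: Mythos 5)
Your sandwich arguments for part (a) and for the equivalence $\varphi_{k}(\vc{\theta})<\infty \Leftrightarrow \psi_{k}(\vc{\theta})<\infty$, as well as the implication $\dd{E}_{\nu}[e^{\br{\vc{\theta},\vc{Z}}}1(\vc{Z}\in S_{k})]<\infty \Rightarrow \varphi_{k}(\vc{\theta})<\infty$ via $y_{k}\le\mu_{k}^{\max}1(\vc{Z}\in S_{k})$, coincide exactly with the paper's (very terse) proof, which cites precisely the bounds $0<\min_{j}h(j)\le\max_{j}h(j)<\infty$ and $0\le y_{k}\le\max_{i}\mu_{k}(i)$. So far you are in agreement with the paper, and those are also the only directions that are actually invoked later (in the proof of \lem{marginal 2}).

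You are right, however, that the remaining implication $\varphi_{k}(\vc{\theta})<\infty \Rightarrow \dd{E}_{\nu}[e^{\br{\vc{\theta},\vc{Z}}}1(\vc{Z}\in S_{k})]<\infty$ is not ``immediate'' from the stated bounds: it needs a $\nu$-a.s.\ \emph{lower} bound on $y_{k}$ over $\{\vc{Z}\in S_{k}\}$, and the paper's one-line proof does not supply one. Your plan to show $\{\vc{Z}\in S_{k},\,y_{k}=0\}$ is $\nu$-null and then use the finitely-many-values structure of $y_{k}$ to get a uniform constant is the right shape, but the cascading step cannot be closed in full generality, and the worry you flag at the end is real. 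Indeed this direction can fail. Take $d=2$, $m=2$, $p_{1,2}=1$, $p_{2,0}=1$, $\lambda_{2}\equiv 0$, $\mu_{1}(i)=\mu_{2}(i)=1$, $\lambda_{1}(1)=1.5$, $\lambda_{1}(2)=0$, $Q$ irreducible. Then $v_{2}(i)\equiv 0$, the stability condition \eq{stability 1} holds, and on $\{Z_{2}=0,\,Z_{1}>0\}$ (which has positive $\nu$-mass) one has $a_{2}=\mu_{1}=\mu_{2}$ so $y_{2}\equiv 0$, while $y_{2}>0$ only on $\{Z_{1}=Z_{2}=0\}$ where $\vc{Z}=\vc{0}$; hence $\varphi_{2}(\vc{\theta})<\infty$ for all $\vc{\theta}$, but $\dd{E}_{\nu}[e^{\br{\vc{\theta},\vc{Z}}}1(\vc{Z}\in S_{2})]=\dd{E}_{\nu}[e^{\theta_{1}Z_{1}}]$ is infinite for large $\theta_{1}$. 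So without a non-degeneracy hypothesis ruling out $v_{k}(\cdot)\equiv 0$ (or, more generally, an exactly balanced face), no drift/irreducibility cascade will rescue this direction. The honest fix is either to add such a non-degeneracy assumption or to state the lemma with only the implications that are needed downstream; your proposal, as an attempt to supply a full proof of the stated equivalence, cannot succeed as written.
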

\begin{proof}
Both (a) and (b) are immediate from \lem{Palm 1} since $0 \le y_{k}(t) \le \max_{i \in \sr{J}} \mu_{k}(i)$ and $0 < \min_{j \in \sr{J}} h(j) \le \max_{j \in \sr{J}} h(i) < \infty$.
\end{proof} 

Thus, we work on $\psi(\vc{\theta})$ and $\psi_{k}(\vc{\theta})$ instead of the first set. To get their relationship, we apply a similar approach to the one used for a two-dimensional semimartingale reflecting Brownian motion (SRBM for short) in \cite{DaiMiya2011} (see also Lemma 6.1 of \cite{Miya2011} for a reflecting random walk). In those papers, the first set of generating functions \eq{set 1} is used, but we can work on the second set due to \lem{marginal 1}. The following lemma is a key for this approach, which is proved in \sectn{marginal 2}. 

\begin{lemma}\rm
\label{lem:marginal 2}
(a) For $\vc{\theta} \in \dd{R}^{d}$, $\varphi(\vc{\theta}) < \infty$, equivalently, $\psi(\vc{\theta}) < \infty$, then $\varphi_{k}(\vc{\theta}) < \infty$ and $\psi_{k}(\vc{\theta}) < \infty$ for $\forall k \in \sr{K}$, and
\begin{align}
\label{eq:BAR 1}
  \gamma(\vc{\theta}) \psi(\vc{\theta}) + \sum_{k =1}^{d} \gamma_{k}(\vc{\theta}) \psi_{k}(\vc{\theta}) = 0.
\end{align}
(b) For $A \subset \sr{K}$ and $\vc{\theta} \in \Gamma^{-}_{\sr{K} \setminus A}$, if $\varphi_{k}(\vc{\theta}) < \infty$, equivalently, $\psi_{k}(\vc{\theta}) < \infty$ for $\forall k \in A$, then
\begin{align}
\label{eq:BAR 2}
  (-\gamma(\vc{\theta})) \psi(\vc{\theta}) + \sum_{k \in \sr{K} \setminus A} (-\gamma_{k}(\vc{\theta})) \psi_{k}(\vc{\theta}) \le \sum_{k \in A} \gamma_{k}(\vc{\theta}) \psi_{k}(\vc{\theta}),
\end{align}
and therefore $\varphi(\vc{\theta}) < \infty$, $\varphi_{k}(\vc{\theta}) < \infty$ for $k \in \sr{K} \setminus A$, equivalently, $\psi(\vc{\theta}) < \infty$, $\psi_{k}(\vc{\theta}) < \infty$ for $k \in \sr{K} \setminus A$, and \eq{BAR 1} holds.
\end{lemma}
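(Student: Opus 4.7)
My starting point is Dynkin's formula \eq{Dynkin 1} applied to the exponential test function $f_{\vc{\theta}}(\vc{z},j) = e^{\br{\vc{\theta},\vc{z}}} h_{\vc{\theta}}(j)$, for which the generator takes the clean form \eq{A 3}. Taking expectation under $\dd{P}_{\nu}$ at $t=1$, using stationarity to eliminate $\dd{E}_{\nu}[f_{\vc{\theta}}(\vc{X}(1)) - f_{\vc{\theta}}(\vc{X}(0))]$, and killing the martingale term $M(\cdot)$ by localization, one formally obtains $\gamma(\vc{\theta})\psi(\vc{\theta}) + \sum_{k \in \sr{K}} \gamma_k(\vc{\theta}) \psi_k(\vc{\theta}) = 0$, with the reflection contribution collapsing to $\psi_k(\vc{\theta})$ through the Palm formula \eq{Palm 1}. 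The remaining work is to justify these manipulations under the respective integrability hypotheses of (a) and (b).

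\textbf{Part (a).} Assume $\psi(\vc{\theta})<\infty$. I first establish $\psi_k(\vc{\theta})<\infty$: by \lem{reflection map 1}, each sample path $\vc{Z}(\cdot)$ is Lipschitz on $[0,1]$ with a deterministic constant $C$ depending only on the model parameters, so $e^{\br{\vc{\theta},\vc{Z}(s)}} \le e^{C\|\vc{\theta}\|} e^{\br{\vc{\theta},\vc{Z}(0)}}$ uniformly in $s \in [0,1]$; combining this with $Y_k(1) \le \max_{i \in \sr{J}} \mu_k(i)$ and boundedness of $h_{\vc{\theta}}$ gives both $\psi_k(\vc{\theta}) < \infty$ and an integrable dominant for $\sr{A} f_{\vc{\theta}}(\vc{X}(s))$ on $[0,1]$. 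Localizing with $\tau_n = \inf\{t: \|\vc{Z}(t)\| > n\}$ makes $M^{\tau_n}$ a true $L^1$-martingale, and passing to $n \to \infty$ via dominated convergence yields \eq{BAR 1}; the equivalence with the $\varphi$-quantities is immediate from \lem{marginal 1}.

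\textbf{Part (b).} Since $\psi(\vc{\theta})$ and $\psi_k(\vc{\theta})$ for $k \in \sr{K}\setminus A$ are not a priori finite, part (a) does not apply directly, so I apply the same procedure with the bounded cutoff test function $f_n(\vc{z},j) = \chi_n(\|\vc{z}\|) f_{\vc{\theta}}(\vc{z},j)$, where $\chi_n \in C^1([0,\infty);[0,1])$ equals $1$ on $[0,n]$ and $0$ on $[n+1,\infty)$. Stationarity and \lem{Dynkin 1} give
\begin{align*}
  0 = \gamma(\vc{\theta}) \psi^{(n)}(\vc{\theta}) + \sum_{k \in \sr{K}} \gamma_k(\vc{\theta}) \psi_k^{(n)}(\vc{\theta}) + \varepsilon_n,
\end{align*}
where $\psi^{(n)},\psi_k^{(n)}$ are cutoff analogs (nondecreasing in $n$) and $\varepsilon_n$ is a remainder supported on the shell $\{n \le \|\vc{z}\| \le n+1\}$. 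Using $-\gamma(\vc{\theta})>0$ and $-\gamma_k(\vc{\theta})>0$ for $k \in \sr{K}\setminus A$, I rearrange to place all nonnegative contributions on one side and pass to the limit by monotone convergence on the positive side, invoking the hypothesis $\psi_k(\vc{\theta})<\infty$ for $k \in A$ on the other side to control that term. This produces \eq{BAR 2}, which in turn forces $\psi(\vc{\theta})$ and $\psi_k(\vc{\theta})$ for $k \in \sr{K}\setminus A$ to be finite, so part (a) then delivers the equality \eq{BAR 1}.

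\textbf{Main obstacle.} The delicate point in (b) is the control of $\varepsilon_n$: on the cutoff shell $f_{\vc{\theta}}$ grows like $e^{\|\vc{\theta}\|(n+1)}$, and a naive bound using only that $|\chi_n'|$ is bounded is clearly insufficient. My plan is either to use a mollified family $\chi_n(x)=\chi(x/n)$ so that $|\chi_n'|\le\|\chi'\|_\infty/n$ and couple this quantitative decay with the Lipschitz estimate on $\vc{Z}(\cdot)$, or, better, to integrate by parts against $\chi_n'$ so that the exponential shell contribution is absorbed into $\psi_k^{(n+1)}(\vc{\theta})$ for $k \in A$, where the given integrability takes over. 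Once this bound closes, monotone convergence on the left and finiteness on the right deliver \eq{BAR 2} and the conclusion follows.
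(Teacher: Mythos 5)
Your overall architecture (Dynkin's formula with the exponential-eigenvector test function, truncation, stationarity, and a limit) matches the paper, and your treatment of part (a) is essentially fine once one notes the Lipschitz bound is overkill: $\psi_k(\vc{\theta})<\infty$ follows at once from $\dd{E}_{\nu}(e^{\br{\vc{\theta},\vc{Z}}}1(\vc{Z}\in S_k))\le\varphi(\vc{\theta})<\infty$ and Lemma~\ref{lem:marginal 1}. The genuine gap is in part (b), and you have located it yourself but not closed it: your multiplicative cutoff $f_n=\chi_n(\|\vc{z}\|)f_{\vc{\theta}}$ produces a shell remainder $\varepsilon_n$ carried by $\chi_n'$, and on that shell $e^{\br{\vc{\theta},\vc{z}}}\sim e^{\|\vc{\theta}\|n}$ while the only leverage available is $\dd{P}_{\nu}(\|\vc{Z}\|\in[n,n+1])$, whose decay is precisely the quantity the lemma is being used to prove. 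Rescaling $\chi_n(x)=\chi(x/n)$ only buys a factor $1/n$, which cannot defeat $e^{\|\vc{\theta}\|n}$; and the integration-by-parts idea does not route $\varepsilon_n$ to a Palm term $\psi^{(n+1)}_k$ for $k\in A$, because the shell $\{n\le\|\vc{z}\|\le n+1\}$ is an interior annulus, not the boundary face $S_A$, so no $Y_k(\cdot)$-measure picks it up.

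The paper's fix is to truncate the exponent rather than the test function: with $w_n$ from \eq{wn 1}, take $f(\vc{z},j)=e^{w_n(\br{\vc{\theta},\vc{z}})}h(j)$. This is a plateau, not a cutoff, so the test function is globally bounded, every partial derivative carries a factor $w_n'(\br{\vc{\theta},\vc{z}})\in[0,1]$ with the same sign as in the untruncated case, and there is no separate shell remainder at all. The price is that the drift term is now scaled by $w_n'$, which destroys the exact eigenvalue identity; the paper absorbs this into a modified matrix $K_n(\vc{\theta})$ and invokes continuity of the Perron--Frobenius eigenpair to recover $\gamma(\vc{\theta})$ and $\vc{h}_{\vc{\theta}}$ in the limit. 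Under the hypothesis $\vc{\theta}\in\Gamma^-_{\sr{K}\setminus A}$ the rearranged truncated identity \eq{n-BAR 3} has all nonnegative terms on the left and a uniformly bounded right side (by $\psi_k(\vc{\theta})<\infty$, $k\in A$, since $0\le w_n'\le 1$), so Fatou's lemma closes the argument and delivers \eq{BAR 2} and the finiteness claims. If you replace your $\chi_n$ cutoff with this plateau truncation and track the modified eigenpair, your sketch goes through.
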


\subsection{The motivation for the fixed point equation}
\label{sec:motivation}

In what follows, we discuss how we arrived at the fixed point equation \eq{fixed 1} for finding the domain $\sr{D}$. We first consider this for $d=2$. From \lem{marginal 2}, we can derive the domain by applying similar arguments in \cite{DaiMiya2011} (see Theorem 2.1 there).

\begin{lemma}
\label{lem:domain 2d}
For the two-dimensional MMFN, if it is stable, then the fixed point equation:
\begin{align}
\label{eq:cp 1a}
 & \alpha_{1} = \sup\{ \theta_{1} \ge 0; \vc{\theta} \in \Gamma^{-}_{2}, \theta_{2} \le \alpha_{2} \}, \quad \alpha_{2} = \sup\{ \theta_{2} \ge 0; \vc{\theta} \in \Gamma^{-}_{1}, \theta_{1} \le \alpha_{1} \}.
\end{align}
has a unique solution $\vc{\alpha} = (\alpha_{1}, \alpha_{2})$, and
\begin{align}
\label{eq:cp 1c}
   \sr{D} = \{\vc{\theta} \in \overleftarrow{\Gamma}^{-}; \theta_{i} < \alpha_{i}, i=1,2\}.
\end{align}
\end{lemma}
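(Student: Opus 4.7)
The plan is to adapt the strategy of \cite[Theorem~2.1]{DaiMiya2011} for the two-dimensional SRBM, with \lem{marginal 2} playing the role of the BAR used there and the geometric properties of $\Gamma^-$ and $\Gamma^-_k$ from \lem{convex 1} replacing the half-plane geometry. The argument splits naturally into (i) existence and uniqueness of $\vc\alpha$, and (ii) a two-sided characterization of $\sr{D}$.

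For (i), I would introduce $T_1(\beta) = \sup\{\theta_1 \ge 0 : \vc\theta \in \Gamma^-_2,\ \theta_2 \le \beta\}$ and the symmetric $T_2$. By \lem{convex 1}(a), the sets $\Gamma^-_1, \Gamma^-_2$ are convex, so $T_1$ and $T_2$ are non-decreasing and concave on their domains of finiteness; by \lem{convex 1}(c)--(d) together with the stability hypothesis, $T_1(0), T_2(0) > 0$ and the attained suprema are finite. Hence $T_1 \circ T_2$ is a continuous concave self-map of a bounded interval, and a standard monotone-concave argument yields a unique positive fixed point $\alpha_2$; setting $\alpha_1 = T_1(\alpha_2)$ gives the unique solution $\vc\alpha$ of \eq{cp 1a}.

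For the inclusion $\sr{D} \supset \{\vc\theta \in \overleftarrow{\Gamma}^{-} : \theta_i < \alpha_i,\ i=1,2\}$ in (ii), I would iterate \lem{marginal 2}(b). Starting from $\vc\theta = \vc0$, where $\psi$ and each $\psi_k$ are trivially finite, applying \lem{marginal 2}(b) with $A = \{1\}$ extends finiteness of $\psi$ and $\psi_2$ to any $\vc\theta \in \Gamma^-_2$ for which $\psi_1(\vc\theta) < \infty$; the latter is maintained by monotonicity of $\psi_1$ in $\theta_2$ as long as $\theta_2 \le \alpha_2$. Interchanging the roles of the two coordinates and alternating produces a monotonically increasing family of $\vc\theta$'s on which $\varphi$ is finite, and the fixed-point identity for $\vc\alpha$ is precisely what ensures that the iteration closes up on the target open region rather than running off.

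Conversely, if $\varphi(\vc\theta) < \infty$, \lem{marginal 2}(a) gives finite $\psi, \psi_1, \psi_2$ at $\vc\theta$ and the BAR equality
\[
\gamma(\vc\theta)\psi(\vc\theta) + \gamma_1(\vc\theta)\psi_1(\vc\theta) + \gamma_2(\vc\theta)\psi_2(\vc\theta) = 0.
\]
Strict positivity of each $\psi$-term forbids $\gamma, \gamma_1, \gamma_2$ from sharing a common strict sign, and a small downward perturbation in each coordinate then gives $\vc\theta \in \overleftarrow{\Gamma}^{-}$. The bound $\theta_k < \alpha_k$ follows by contradiction: were, say, $\theta_1 \ge \alpha_1$, a small enlargement of $\vc\theta$ together with the BAR inequality of \lem{marginal 2}(b) with $A = \{1\}$ would force $\psi_1$ to diverge, contradicting the finiteness secured by the upward extension of step (ii). The hardest part will be making that upward extension fully rigorous: one must verify that the monotone chain of $\vc\theta$'s does not slip onto $\partial \Gamma^-$ before reaching $\vc\alpha$ and that every point of the target set is eventually captured. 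This is exactly where the strict inequalities $\theta_i < \alpha_i$ and the openness of $\Gamma^-_k$ become indispensable.
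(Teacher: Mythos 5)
Your overall plan does match what the paper intends: the paper's own ``proof'' of \lem{domain 2d} is a one-line deferral to \cite[Theorem~2.1]{DaiMiya2011}, and you correctly identify \lem{marginal 2} as the substitute for the BAR there, so in spirit the route is the same. The forward inclusion via alternately invoking \lem{marginal 2}(b) with $A=\{1\}$ and $A=\{2\}$ and using that $\psi_k$ depends only on $\theta_{3-k}$ (since $Z_k=0$ under $\dd{P}_k$) is exactly the right mechanism.

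That said, two steps in your sketch as written would not survive scrutiny. First, the uniqueness of $\vc\alpha$: a concave self-map of a compact interval with $T(0)>0$ need not have a unique fixed point --- it can agree with the identity on a whole subinterval, e.g., when a flat piece of $\partial\Gamma^-$ or of the hyperplane $\gamma_k(\vc\theta)=0$ produces an affine stretch of $T_1\circ T_2$. The argument in \cite{DaiMiya2011} does not rest on abstract concavity; it uses the explicit geometry of the curve $\partial\Gamma^-$ and the two lines $\gamma_k=0$, in particular how the iteration lands on $\partial\Gamma^-\cap\{\gamma_k=0\}$, to pin down a single classification of endpoints, and an analogous case analysis is needed here (or, as the paper does implicitly, one characterizes $\vc\alpha$ directly as $\alpha_k=\sup\{\theta_k : \vc\theta\in\sr{D}\}$ and reads off uniqueness from the characterization of $\sr{D}$ rather than from a fixed-point theorem). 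Second, the converse inclusion $\sr{D}\subset\{\vc\theta\in\overleftarrow{\Gamma}^-:\theta_i<\alpha_i\}$ is handled too quickly: from the BAR equality and positivity of $\psi,\psi_1,\psi_2$ you can only exclude that $\gamma,\gamma_1,\gamma_2$ share a strict sign, but the mixed case $\gamma(\vc\theta)>0$, $\gamma_1(\vc\theta)<0$, $\gamma_2(\vc\theta)<0$ is perfectly compatible with \eq{BAR 1} and ``a small downward perturbation'' does not by itself place such a $\vc\theta$ in $\overleftarrow{\Gamma}^-$. In \cite{DaiMiya2011} this direction is obtained through analytic continuation and a careful tracking of where $\varphi$ and the boundary functions must develop singularities, together with convexity of $\sr{D}$ and of $\Gamma^-$; you would need to reproduce that machinery rather than lean on the BAR equality plus a perturbation. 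Until those two points are tightened the argument has real gaps, even though the overall architecture is right.
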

\begin{remark}\rm
\label{rem:domain 2d}
A concrete shape of $\sr{D}$ is obtained in \cite{DaiMiya2011,Miya2011}.
\end{remark}

We like to generalize the fixed point equation \eq{cp 1a} for $d \ge 3$. The first idea is to replace $\alpha_{1}$ and $\alpha_{2}$ by vectors, but the supremes as in \eq{cp 1a} can not be used. To overcome this difficulty, we replace $\alpha_{1}$ and $\alpha_{2}$ by sets, $D_{2} = \{\theta_{1} \in \dd{R}; \theta_{2} \le \alpha_{2}\}$ and $D_{1} = \{\theta_{2} \in \dd{R}; \theta_{1} \le \alpha_{1}\}$, then \eq{cp 1a} and \eq{cp 1c} are rewritten as
\begin{align}
\label{eq:cp 2a}
 & D_{2} = \{ \theta_{1} \in \dd{R}; \vc{\theta} \in \overleftarrow{\Gamma}^{-}_{2}, \theta_{2} \in \overleftarrow{D}_{1} \}, \quad D_{1} = \{ \theta_{2} \in \dd{R}; \vc{\theta} \in \overleftarrow{\Gamma}^{-}_{1}, \theta_{1} \in \overleftarrow{D}_{2} \},\\
\label{eq:cp 2c}
  & \sr{D} = D \equiv \{\vc{\theta} \in \overleftarrow{\Gamma}^{-}; \theta_{1} \in D_{2}, \theta_{2} \in D_{1}\},
\end{align}
where \eq{cp 2a} is the fixed point equation for one-dimensional sets $D_{k}$ for $k=1,2$. By \lem{domain 2d}, this proves \lem{fixed 1} for $d=2$.

We apply this idea of the fixed point equation of sets for $d \ge 3$. For example, for $d=3$, one may consider to replace the two set equations in \eq{cp 2a} by the following three set equations.
\begin{align*}
 & D_{1} = \{(\theta_{2}, \theta_{3}) \in \dd{R}^{2}; \vc{\theta} \in \overleftarrow{\Gamma}^{-}_{1}, (\theta_{1},\theta_{2}) \in \overleftarrow{D}_{3}, (\theta_{1},\theta_{3}) \in D_{2}\},\\
 & D_{2} = \{(\theta_{1}, \theta_{3}) \in \dd{R}^{2}; \vc{\theta} \in \overleftarrow{\Gamma}^{-}_{2}, (\theta_{2},\theta_{3}) \in \overleftarrow{D}_{1}, (\theta_{1},\theta_{2}) \in D_{3}\},\\
 & D_{3} = \{(\theta_{1}, \theta_{2}) \in \dd{R}^{2}; \vc{\theta} \in \overleftarrow{\Gamma}^{-}_{3}, (\theta_{1},\theta_{3}) \in \overleftarrow{D}_{2}, (\theta_{2},\theta_{3}) \in D_{1}\}.
\end{align*}
However, this is not good because $D^{(0)}_{k} = \overleftarrow{\vc{0}}_{\sr{K} \setminus \{k\}}$ for $k=1,2,3$ imply $D^{(1)}_{k} = \overleftarrow{\vc{0}}_{\sr{K} \setminus \{k\}}$. Hence, we must add more regions in their righthand sides fully utilizing \eq{BAR 2}. For example, the above equation for $D_{1}$ should be replaced by
\begin{align*}
  D_{1} = \bigcup_{1 \in A \subset \sr{K}} \{(\theta_{2},\theta_{3}) \in \dd{R}^{d-1}; \vc{\theta} \in \overleftarrow{\Gamma}^{-}_{A}, \vc{\theta}_{\sr{K} \setminus \{\ell\}} \in \overleftarrow{D_{\ell}}, \forall \ell \in \sr{K} \setminus A\}.
\end{align*}
 This is the reason why we arrive at the fixed point equation \eq{fixed 1}. 

\subsection{Proof of \thr{upper 1}}
\label{sec:upper 1}

Lemmas \lemt{upper 1} and \lemt{fixed 1} yield (a) and (b), while (c) is obtained from Lemmas \lemt{marginal 1} and \lemt{marginal 2}, using the same arguments in \cite{DaiMiya2011,DaiMiya2013}. See also (c) of \lem{fixed 1} and \lem{domain 2d}.

\section{Change of measure and the proof of \thr{lower 1}} \setnewcounter
\label{sec:lower}

In this section, we aim to derive the stationary tail probabilities of $\vc{Z}(t)$ for asymptotic analysis through exponential change of measure. After presenting several lemmas for this, we prove \thr{lower 1}. 

\subsection{MMFN process under change of measure} 
\label{sec:martingale}

We consider the $d$-dimensional MMFN process under change of measure by an exponential martingale. Recall that $\vc{h}_{\vc{\theta}}$ is the normalized right eigenvector of $K(\vc{\theta})$ for the eigenvalue $\gamma(\vc{\theta})$ and $f_{\vc{\theta}}(\vc{z},i) \equiv e^{\br{\vc{\theta},\vc{z}}} h_{\vc{\theta}}(i)$. It follows from \eq{A 3} and \lem{Dynkin 1} that
\begin{align}
\label{eq:M 2}
  M(t) & = f_{\vc{\theta}}(\vc{X}(t)) - f_{\vc{\theta}}(\vc{X}(0)) - \int_{0}^{t} f_{\vc{\theta}}(\vc{X}(s)) \left(\gamma(\vc{\theta}) + \sum_{k \in \sr{K}} \gamma_{k}(\vc{\theta}) y_{\ell}(s) \right) ds
\end{align}
is an $\dd{F}$-martingale.

By the standard arguments (e.g., see Section 3.1 of \cite{Miya2017a}), we can define an exponential martingale $E^{\vc{\theta}}(t)$ as
\begin{align}
\label{eq:EM 1}
  E^{\vc{\theta}}(t) & = \frac {e^{\br{\vc{\theta},\vc{Z}(t)}} h_{\vc{\theta}}(J(t))} {e^{\br{\vc{\theta},\vc{Z}(0)}} h_{\vc{\theta}}(J(0))} \exp\left(- \gamma(\vc{\theta}) t - \sum_{k \in \sr{K}} \gamma_{k}(\vc{\theta}) Y_{k}(t)\right), \qquad t \ge 0.
\end{align}

We then change the probability measure $\dd{P}_{\vc{x}}$ by $E^{\vc{\theta}}(\cdot)$ (e.g., see \cite{KuniWata1963,Miya2017a}). Namely, we define a probability measure $\widetilde{\dd{P}}_{\vc{x}}^{\vc{\theta}}$ under an initial state $\vc{x} \in S$ by
\begin{align}
\label{eq:change 1}
  \frac {d\widetilde{\dd{P}}_{\vc{x}}^{\vc{\theta}}} {d\dd{P}_{\vc{x}}} \Big|_{\sr{F}_{t}} = E^{\vc{\theta}}(t), \qquad t \ge 0.
\end{align}
Let $\dd{P}_{\nu}$ and $\widetilde{\dd{P}}_{\nu}^{\vc{\theta}}$ be probability measures such that $\dd{P}_{\nu}(C) = \int_{S} \dd{P}_{\vc{x}}(C)\nu(d\vc{x})$ and $\widetilde{\dd{P}}_{\nu}^{\vc{\theta}}(C) = \int_{S} \widetilde{\dd{P}}^{\vc{\theta}}_{\vc{x}}(C)\nu(d\vc{x})$ for $C \in \sr{F}$ and the stationary distribution $\nu$, namely, those under $\vc{X}(0)$ subject to $\nu$ on $S$, then, for a non-negative $\sr{F}_{t}$-measurable random variable $U$ with finite expectation, we have
\begin{align}
\label{eq:change 2}
   \widetilde{\dd{E}}^{\vc{\theta}}_{\nu}(U) = \dd{E}_{\nu}(UE^{\vc{\theta}}(t)), \qquad \dd{E}_{\nu}(U) = \widetilde{\dd{E}}^{\vc{\theta}}_{\nu}(E^{\vc{\theta}}(t)^{-1}U),
\end{align}
where $\dd{E}_{\nu}$ and $\widetilde{\dd{E}}^{\vc{\theta}}_{\nu}$ represent the expectations concerning $\dd{P}_{\nu}$ and $\widetilde{\dd{P}}_{\nu}^{\vc{\theta}}$, respectively. Similarly, for conditional expectations, we have, for $0 \le s < t$ and $\sr{F}_{t}$-measurable $U$,
\begin{align}
\label{eq:change 3}
  \widetilde{\dd{E}}^{\vc{\theta}}_{\nu}(U|\sr{F}_{s}) = \dd{E}_{\nu}\Big(U\frac {E^{\vc{\theta}}(t)}{E^{\vc{\theta}}(s)} \Big|\sr{F}_{s}\Big), \qquad \dd{E}_{\nu}(U |\sr{F}_{s}) = \widetilde{\dd{E}}^{\vc{\theta}}_{\nu}\Big(U\frac {E^{\vc{\theta}}(s)}{E^{\vc{\theta}}(t)} \Big|\sr{F}_{s}\Big).
\end{align}

Our next task is to find the process $\vc{X}(\cdot)$ under the new measure $\widetilde{\dd{P}}_{\nu}^{\vc{\theta}}$. It is known that $\vc{X}(\cdot)$ is a Markov process under $\widetilde{\dd{P}}_{\nu}^{\vc{\theta}}$, and its extended generator $\widetilde{\sr{A}}^{\vc{\theta}}$ is given for test function $\widetilde{f} \in C^{1}(S)$ by
\begin{align}
\label{eq:new A 1}
  \widetilde{\sr{A}} \widetilde{f}(\vc{x}) = \frac 1{f_{\vc{\theta}}(\vc{x})} \left( \sr{A}(\widetilde{f} \bullet f_{\vc{\theta}})(\vc{x}) - \widetilde{f}(\vc{x}) \sr{A} f_{\vc{\theta}}(\vc{x}) \right), \quad \vc{x} \in S \times \sr{J}, \;\; \widetilde{f} \bullet f_{\vc{\theta}} \in \sr{D}(\sr{A}),
\end{align}
where $\widetilde{f} \bullet f_{\vc{\theta}}(\vc{x}) = \widetilde{f}(\vc{x}) f_{\vc{\theta}}(\vc{x})$ and $\sr{D}(\sr{A})$ is the domain of $\sr{A}$ (see, e.g., \cite{PalmRols2002}).

For $\vc{\eta} \in \dd{R}^{d}$ and $\widetilde{h} \in F_{+}(\sr{J})$, $\widetilde{f}(\vc{z},i) = e^{\br{\vc{\eta},\vc{z}}} \widetilde{h}(i)$. By \eq{A 2} and \eq{A 3}, we have
\begin{align*}
 & \frac {\sr{A}(\widetilde{f} \bullet f_{\vc{\theta}})(\vc{X}(t))} {f_{\vc{\theta}}(\vc{X}(t))} = e^{\br{\vc{\eta},\vc{Z}(t)}} \left(\frac {[K(\vc{\eta} + \vc{\theta}) \widetilde{\vc{h}} \bullet \vc{h}_{\vc{\theta}}]_{J(t)}} {h_{\vc{\theta}}(J(t))} +\widetilde{h}(J(t)) \sum_{k \in \sr{K}} \gamma_{k}(\vc{\eta}+ \vc{\theta}) y_{k}(t) \right),\\
 & \frac 1{f_{\vc{\theta}}(\vc{X}(t))} \widetilde{f}(\vc{X}(t)) \sr{A} f_{\vc{\theta}}(\vc{x}) = e^{\br{\vc{\eta},\vc{Z}(t)}} \widetilde{h}(J(t)) \left(\gamma(\vc{\theta}) + \sum_{k \in \sr{K}} \gamma_{k}(\vc{\theta}) y_{k}(t) \right),
\end{align*}
where $\widetilde{\vc{h}} \bullet \vc{h}_{\vc{\theta}}(i) = \widetilde{h}(i) h_{\vc{\theta}}(i)$. Hence, \eq{new A 1} yields
\begin{align}
\label{eq:new A 2}
  \widetilde{\sr{A}} \widetilde{f}(\vc{X}(t)) = \widetilde{f}(\vc{X}(t)) \left(\frac {[K(\vc{\eta} + \vc{\theta}) \widetilde{\vc{h}} \bullet \vc{h}_{\vc{\theta}}]_{J(t)}} {\widetilde{h}(J(t)) h_{\vc{\theta}}(J(t))} - \gamma(\vc{\theta}) + \sum_{k \in \sr{K}} \gamma_{k}(\vc{\eta}) y_{k}(t) \right).
\end{align}

We now define $m \times m$ matrix $Q^{\vc{\theta}} = \{q_{ij}^{\vc{\theta}}; i,j \in \sr{J}\}$ as
\begin{align*}
  q_{ij}^{\vc{\theta}} = \left\{
\begin{array}{ll}
 q_{ij} h_{\vc{\theta}}(j)/h_{\vc{\theta}}(i), \quad & i \ne j,  \\
  \sum_{k \in \sr{K}} \theta_{k} v_{k}(i) + q_{ii} - \gamma(\vc{\theta}), & i=j,
\end{array}
\right.
\end{align*}
then $Q^{\vc{\theta}}$ is a transition rate matrix because it follows from \eq{K 1} that
\begin{align*}
  \sum_{j \in \sr{J}} q_{ij}^{\vc{\theta}} = \frac {1}{h_{\vc{\theta}}(i)} \left(\sum_{j \ne i} q_{ij} h_{\vc{\theta}}(j) + \left(\sum_{k \in \sr{K}} \theta_{k} v_{k}(i) + q_{ii}\right) h_{\vc{\theta}}(i) - \gamma(\vc{\theta}) h_{\vc{\theta}}(i) \right) = 0.
\end{align*}
Furthermore, we have
\begin{align}
\label{eq:new K 1}
  \frac {[K(\vc{\eta} + \vc{\theta}) \widetilde{\vc{h}} \bullet \vc{h}_{\vc{\theta}}]_{i}} {\widetilde{h}(i) h_{\vc{\theta}}(i)} - \gamma(\vc{\theta}) & = \sum_{k \in \sr{K}} (\eta_{k} + \theta_{k}) v_{k}(i) + \sum_{j \in \sr{J}} q_{ij} \frac {h_{\vc{\theta}}(j) \widetilde{h}(j)} {h_{\vc{\theta}}(i) \widetilde{h}(i)}  - \gamma(\vc{\theta}) \nonumber\\
  & = \frac{1}{\widetilde{h}(i)} \left(\sum_{k \in \sr{K}} \eta_{k} v_{k}(i) \widetilde{h}(i) + \sum_{j \in \sr{J}} q_{ij}^{\vc{\theta}} \widetilde{h}(j)\right).
\end{align}

Thus, defining $K^{\vc{\theta}}(\vc{\eta}) \equiv \diag{\sum_{k \in \sr{K}} \eta_{k} \vc{v}_{k}} + Q^{\vc{\theta}}$,
\eq{new A 2} can be written as
\begin{align}
\label{eq:new A 3}
  \widetilde{\sr{A}} \widetilde{f}(\vc{X}(t)) = \widetilde{f}(\vc{X}(t)) \left(\frac {[K^{\vc{\theta}}(\vc{\eta}) \widetilde{\vc{h}}]_{J(t)}} {\widetilde{h}(J(t))} + \sum_{k \in \sr{K}} \gamma_{k}(\vc{\eta}) y_{k}(t) \right).
\end{align}
Hence, $\widetilde{\sr{A}}$ has the exactly same form as $\sr{A}$ of \eq{A 3}, and therefore $\vc{X}(\cdot)$ under $\widetilde{\dd{P}}$ represents the $d$-dimensional MMFN with the same drift vectors $\vc{v}_{k}$ for $k \in \sr{K}$, the same reflecting matrix $R$ and the transition rate matrix $Q^{\vc{\theta}}$ for the background Markov chain.

Similar to $K(\vc{\theta})$, we denote the Perron-Frobenius eigenvalue and right and left eigenvectors of $K^{\vc{\theta}}(\vc{\eta})$ by $\gamma^{\vc{\theta}}(\vc{\eta})$, $\vc{h}^{\vc{\theta}}_{\vc{\eta}}$ and $(\vc{\xi}^{\vc{\theta}}_{\vc{\eta}})^{\rs{t}}$, respectively, and normalize $\vc{\xi}^{\vc{\theta}}_{\vc{\eta}}$ and $\vc{h}^{\vc{\theta}}_{\vc{\eta}}$ through $\vc{1}$. Namely, 
\begin{align}
\label{eq:nomalized xi1}
  K^{\vc{\theta}}(\vc{\eta}) \vc{h}^{\vc{\theta}}_{\vc{\eta}} = \gamma^{\vc{\theta}}(\vc{\eta}) \vc{h}^{\vc{\theta}}_{\vc{\eta}}, \qquad \br{\vc{\xi}^{\vc{\theta}}_{\vc{\eta}}, \vc{h}^{\vc{\theta}}_{\vc{\eta}}} = 1, \qquad \br{\vc{\xi}^{\vc{\theta}}_{\vc{\eta}}, \vc{1}} = 1.
\end{align}
Note that $\pi^{\vc{\theta}} \equiv \{\xi_{\vc{0}}^{\vc{\theta}}(i); i \in \sr{J}\}$ is the stationary distribution of $Q^{\vc{\theta}}$, where $\xi^{\vc{\theta}}_{\vc{0}}(i)$ is the $i$-th entry of $\vc{\xi}_{\vc{0}}^{\vc{\theta}}$. Then, applying $\widetilde{h}(i) = h_{\vc{\eta} + \vc{\theta}}(i) /h_{\vc{\theta}}(i)$ in \eq{new K 1} and using the fact that $K(\vc{\eta} + \vc{\theta}) \vc{h}_{\vc{\eta} + \vc{\theta}} = \gamma(\vc{\eta} + \vc{\theta}) \vc{h}_{\vc{\eta} + \vc{\theta}}$ we have
\begin{align*}
  \gamma(\vc{\eta} + \vc{\theta}) - \gamma(\vc{\theta}) = \frac{1}{\widetilde{h}(i)} \left(\sum_{k \in \sr{K}} \eta_{k} v_{k}(i) \widetilde{h}(i) + \sum_{j \in \sr{J}} q_{ij}^{\vc{\theta}} \widetilde{h}(j)\right) = \frac{1}{\widetilde{h}(i)} [K^{\vc{\theta}}(\vc{\eta}) \widetilde{\vc{h}}]_{i}.
\end{align*}
This implies that $\gamma(\vc{\eta} + \vc{\theta}) - \gamma(\vc{\theta})$ must be the Perron-Frobenius eigenvalues of $K^{\vc{\theta}}(\vc{\eta})$ because $\widetilde{\vc{h}}$ is a positive vector and $Q^{\vc{\theta}}$ is irreducible. Hence, $\widetilde{\vc{h}} = \vc{h}^{\vc{\theta}}_{\vc{\eta}}$ and
\begin{align}
\label{eq:new ga 1}
  \gamma^{\vc{\theta}}(\vc{\eta}) = \gamma(\vc{\eta} + \vc{\theta}) - \gamma(\vc{\theta}).
\end{align}
Let $\ol{\vc{v}}^{\vc{\theta}}$ be the mean drift vector whose $k$-th entry is $\sum_{i \in \sr{K}} v_{k}(i) \pi^{\vc{\theta}}(i)$, and let $\ol{\lambda}^{\vc{\theta}}_{k} = \sum_{i \in \sr{K}} \lambda_{k}(i) \pi^{\vc{\theta}}(i)$ and $\ol{\mu}^{\vc{\theta}}_{k} = \sum_{i \in \sr{K}} \mu_{k}(i) \pi^{\vc{\theta}}(i)$, then it follows from \eq{v k} that
\begin{align}
\label{eq:v k1}
  \ol{\vc{v}}^{\vc{\theta}} = \ol{\vc{\lambda}}^{\vc{\theta}} + P^{\rs{t}} \ol{\vc{\mu}}^{\vc{\theta}} - \ol{\vc{\mu}}^{\vc{\theta}},
\end{align}
while, similarly to \eq{mean v 1}, we have
\begin{align}
\label{eq:new mean v 1}
  \ol{\vc{v}}^{\vc{\theta}} = \nabla \gamma^{\vc{\theta}}(\vc{\eta})|_{\vc{\eta} = \vc{0}} = \nabla \gamma(\vc{\theta}).
\end{align}

We will use the following fact for deriving a lower bound of the tail decay rate.
\begin{lemma}\rm
\label{lem:stability 3}
For $k \in \sr{K}$, if $\ol{v}_{k}^{\vc{\theta}} < 0$, then station $k$ is stable under the change of measure by $E^{\vc{\theta}}(\cdot)$.
\end{lemma}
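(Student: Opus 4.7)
The plan is to observe that, by the generator computation culminating in \eq{new A 3}, under the tilted measure $\widetilde{\dd{P}}_{\nu}^{\vc{\theta}}$ the process $\vc{X}(\cdot)$ is again a Markov modulated fluid network process with the \emph{same} station drifts $\vc{v}_{k}$ and the \emph{same} reflection matrix $R$, only the background generator having been replaced by the tilted one $Q^{\vc{\theta}}$. My first step is therefore to check that $Q^{\vc{\theta}}$ satisfies the standing assumption (\sect{MMFN}a): from $q^{\vc{\theta}}_{ij} = q_{ij} h_{\vc{\theta}}(j)/h_{\vc{\theta}}(i)$ for $i \ne j$ and the strict positivity of $\vc{h}_{\vc{\theta}}$ granted by Perron--Frobenius, the off-diagonal sign pattern of $Q^{\vc{\theta}}$ coincides with that of $Q$, so irreducibility is preserved, and by construction $\pi^{\vc{\theta}}$ is its unique stationary distribution. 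Since none of the modelling primitives $\lambda_{k}(i), \mu_{k}(i), p_{\ell,k}$ changes under the tilt, assumption (\sect{MMFN}b) is automatic as well.

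Once this framework is set, the per-station stability criterion recorded just before \eq{stability 0} can be applied verbatim to the tilted model: station $k$ is stable under $\widetilde{\dd{P}}_{\nu}^{\vc{\theta}}$ whenever the $\pi^{\vc{\theta}}$-average $\ol{\alpha}^{*,\vc{\theta}}_{k}$ of the maximal solution of the non-linear traffic equation \eq{traffic 2} satisfies $\ol{\alpha}^{*,\vc{\theta}}_{k} < \ol{\mu}^{\vc{\theta}}_{k}$. Hence the whole argument reduces to deriving this scalar inequality from the hypothesis $\ol{v}^{\vc{\theta}}_{k} < 0$.

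That derivation is essentially one line. From \eq{traffic 2} applied with the tilted background law and the trivial bound $\min(a,b) \le b$,
\begin{align*}
  \alpha^{*,\vc{\theta}}_{k}(i) \le \lambda_{k}(i) + \sum_{\ell \in \sr{K}} \mu_{\ell}(i)\, p_{\ell,k}, \qquad i \in \sr{J},
\end{align*}
and averaging against $\pi^{\vc{\theta}}$ and then invoking \eq{v k1} yields
\begin{align*}
  \ol{\alpha}^{*,\vc{\theta}}_{k} \le \ol{\lambda}^{\vc{\theta}}_{k} + \sum_{\ell \in \sr{K}} p_{\ell,k}\, \ol{\mu}^{\vc{\theta}}_{\ell} = \ol{\mu}^{\vc{\theta}}_{k} + \ol{v}^{\vc{\theta}}_{k} < \ol{\mu}^{\vc{\theta}}_{k},
\end{align*}
which is precisely the required inequality.

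The only point that could be viewed as an obstacle, though it is really a sanity check, is confirming that the Kelly--Whitt / Chen--Mandelbaum argument for stability of a single station in an otherwise possibly overloaded network (the one summarised in the paragraph following \eq{traffic 2}) indeed transfers to the tilted process. This is automatic because the Skorohod construction of $(\vc{Z}(\cdot),\vc{Y}(\cdot))$ in \sectn{sample} depends on $\vc{V}(\cdot)$ only through its Lipschitz sample paths and on the fixed matrix $R$, both of which are unaltered by the change of measure, while the mean-rate inputs to that argument are computed under the new stationary law $\pi^{\vc{\theta}}$, exactly as above.
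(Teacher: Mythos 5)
Your proposal is correct and follows essentially the same route as the paper's own proof: both apply the bound $\min(\alpha_{\ell}(i),\mu_{\ell}(i)) \le \mu_{\ell}(i)$ in the nonlinear traffic equation \eq{traffic 2}, average against $\pi^{\vc{\theta}}$, and identify the result with $\ol{\mu}^{\vc{\theta}}_{k} + \ol{v}^{\vc{\theta}}_{k}$ via \eq{v k1}. Your preliminary checks (irreducibility of $Q^{\vc{\theta}}$, invariance of the Skorohod map under the tilt) are facts the paper establishes earlier in \sectn{martingale} and then uses implicitly, so making them explicit is fine but not a departure in method.
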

\begin{proof}
Similar to $\ol{\vc{\lambda}}^{\vc{\theta}}, \ol{\vc{\mu}}^{\vc{\theta}}$, define $\ol{\vc{\alpha}}^{\vc{\theta}}, \ol{\widetilde{\vc{\alpha}}}^{\vc{\theta}}$ for the solutions $\ol{\vc{\alpha}}, \ol{\widetilde{\vc{\alpha}}}$ of the linear and non-linear traffic equations, respectively. Then, $\ol{v}^{\vc{\theta}}_{k} < 0$ implies that
\begin{align}
\label{eq:}
  \ol{\widetilde{\alpha}}^{\vc{\theta}}_{k} - \ol{\mu}^{\vc{\theta}}_{k} & = \ol{\lambda}^{\vc{\theta}}_{k} + \sum_{\ell \in \sr{K}} \sum_{i \in \sr{I}} \min(\alpha_{\ell}(i), \mu_{\ell}(i)) \pi^{\vc{\theta}}(i) p_{\ell,k} - \ol{\mu}_{k} \nonumber\\
  & \le \ol{\lambda}^{\vc{\theta}}_{k} + \sum_{\ell \in \sr{K}} \sum_{i \in \sr{I}} \mu_{\ell}(i) \pi^{\vc{\theta}}(i) p_{\ell,k} - \ol{\mu}_{k} = \ol{v}^{\vc{\theta}}_{k} < 0,
\end{align}
where the last equality is obtained by \eq{v k1}. Hence, station $k$ is stable.
\end{proof} 

\subsection{Partially reflecting fluid network}
\label{sec:partially}

To fully utilize change of measure, we apply it to a $d$-dimensional Markov modulated fluid process whose $k$-th component is nonnegative and has a reflecting boundary at $0$ only for all $k \in A$ for a given $A \subset \sr{K}$, which is called an $S_{A}$-reflecting Markov modulated fluid network. To formally define this process, we introduce some notations for vectors and matrices. Recall the subvector and submatrix conventions (see the end of \sectn{dynamics}). Namely, $R_{A_{1},A_{2}}$ is the submatrix of $R$ whose row and column indexes are $A_{1}, A_{2}$, respectively. We here denote $R_{A,A}$ simply by $R_{A}$. For $A \subset \sr{K}$, we define the $S_{A}$-reflecting Markov modulated fluid network $\{\vc{Z}^{(A)}(t); t \ge 0\}$ as
\begin{align}
\label{eq:ZA 0}
  \vc{Z}^{(A)}(t) = \vc{Z}^{(A)}(0) + \vc{V}(t) + \left(\begin{array}{c} R_{A} \\ R_{A^{c},A} \end{array}\right) \vc{Y}^{(A)}_{A}(t) \ge \vc{0}, \qquad t \ge 0,
\end{align}
where $A^{c} = \sr{K} \setminus A$ and
\begin{align}
\label{eq:ZAY 2}
 \int_{0}^{t} Z^{(A)}_{k}(s) Y^{(A)}_{k}(ds) = 0, \quad Y^{(A)}_{k}(0) = 0, \quad \mbox{$Y^{(A)}_{k}(t) \uparrow$ in $t$}, \quad k \in A, t \ge 0.
\end{align}
These conditions are equivalent to $\vc{Z}(\cdot), \vc{B}(\cdot)$ satisfying \eq{Z 1} and \eq{b 1} in which $b_{k}(t)$ is replaced by $\mu_{k}(J(t))$ for $k \in A$, which shows that $S_{A}$ is indeed only a reflecting boundary of $\vc{Z}^{(A)}(t)$. Clearly, $\vc{Z}^{(\sr{K})}(\cdot) = \vc{Z}(\cdot)$, while $\vc{Z}^{(\emptyset)}(t) = \vc{Z}^{(\emptyset)}(0) + \vc{V}(t)$, which is a Markov additive process on $\dd{R}^{d}$.

It is not hard to see that the process $\vc{Z}^{(A)}(\cdot) \equiv \{\vc{Z}^{(A)}(t); t \ge 0\}$ uniquely exists similar to MMFN $\vc{Z}(\cdot)$. As a fluid network model, $\vc{Z}^{(A)}_{A}(\cdot)$ is influenced by stations in $A^{c}$ through $\vc{v}_{A}$, but it is completely determined by $Q$, $\vc{v}_{A}$ and $R_{A}$ as a MMFN on $\dd{R}_{+}^{A^{v}} \times \sr{J}$, while $\vc{Z}^{(A)}_{A^{c}}(\cdot)$ is influenced by $\vc{Z}^{(A)}_{A}(\cdot)$ through $\vc{Y}^{(A)}_{A}(\cdot)$.

Recall that $\vc{Z}^{(A)}(\cdot)$ is the $S_{A}$-reflecting fluid process. We note the following fact.

\begin{lemma}\rm
\label{lem:ZA limit 1}
Under the stability condition \eq{stability 1}, for $A \subset \sr{K}$, if $A \ne \sr{K}$, then
\begin{align}
\label{eq:ZA limit 1}
  \exists k \in A^{c}, \quad \lim_{t \to \infty} \vc{Z}^{(A)}_{k}(t) = -\infty, \qquad w.p.1.
\end{align}
\end{lemma}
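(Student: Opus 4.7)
The plan is to pass to the fluid scaling limit of $\vc{Z}^{(A)}(\cdot)$ and then exploit the $\sr{M}$-matrix structure of $R = I - P^{\rs{t}}$ together with the global stability condition \eq{stability 1}.

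First, I would establish the fluid limit. Since $R_{A}$ is a principal submatrix of the nonsingular $\sr{M}$-matrix $R$, it is itself a nonsingular $\sr{M}$-matrix, so the partial Skorohod problem \eq{ZA 0}--\eq{ZAY 2} is solved by a Lipschitz-continuous map of the input $\vc{V}(\cdot)$, by a verbatim adaptation of the proof of \lem{reflection map 1}. By the strong law of large numbers for the ergodic chain $J(\cdot)$, $\vc{V}(n t)/n \to t \ol{\vc{v}}$ almost surely, uniformly on compact time intervals, so Lipschitz continuity yields
\begin{align*}
 \vc{Z}^{(A)}(n t)/n \to t \vc{z}^{*}, \qquad \vc{Y}^{(A)}_{A}(n t)/n \to t \vc{y}^{(A),*}_{A}, \qquad \mbox{a.s., uniformly on compacts,}
\end{align*}
where $(\vc{z}^{*}, \vc{y}^{(A),*}_{A})$ is the unique solution of the stationary linear complementarity problem
\begin{align*}
 \vc{z}^{*} = \ol{\vc{v}} + \left(\begin{array}{c} R_{A} \\ R_{A^{c},A} \end{array}\right) \vc{y}^{(A),*}_{A}, \quad \vc{z}^{*}_{A} \ge \vc{0}, \quad \vc{y}^{(A),*}_{A} \ge \vc{0}, \quad z^{*}_{k} y^{(A),*}_{k} = 0 \;\; \forall k \in A.
\end{align*}

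Second, I would show that there exists $k \in A^{c}$ with $z^{*}_{k} < 0$. Extend $\vc{y}^{(A),*}_{A}$ to $\widetilde{\vc{y}} \in \dd{R}^{d}$ by setting $\widetilde{\vc{y}}_{A^{c}} = \vc{0}$, so $\vc{z}^{*} = \ol{\vc{v}} + R \widetilde{\vc{y}}$. Suppose, for contradiction, that $\vc{z}^{*}_{A^{c}} \ge \vc{0}$; since $\vc{z}^{*}_{A} \ge \vc{0}$ always, this forces $\vc{z}^{*} \ge \vc{0}$, hence $R \widetilde{\vc{y}} \ge -\ol{\vc{v}}$. By \eq{stability 1}, the vector $\vc{y}^{*} := -R^{-1} \ol{\vc{v}}$ is strictly positive and satisfies $R \vc{y}^{*} = -\ol{\vc{v}}$, so $R(\widetilde{\vc{y}} - \vc{y}^{*}) \ge \vc{0}$. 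Because $R$ is a nonsingular $\sr{M}$-matrix, $R^{-1} \ge 0$ entrywise, so $\widetilde{\vc{y}} \ge \vc{y}^{*} > \vc{0}$. This contradicts $\widetilde{\vc{y}}_{A^{c}} = \vc{0}$, which is a genuine constraint since $A \ne \sr{K}$.

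Finally, for this $k \in A^{c}$ the fluid limit above gives $Z^{(A)}_{k}(t)/t \to z^{*}_{k} < 0$ a.s., hence $Z^{(A)}_{k}(t) \to -\infty$ a.s., proving \eq{ZA limit 1}. The main obstacle is the fluid-limit step, since the partial Skorohod map has not been developed explicitly earlier in the paper; once one notes that $R_{A}$ being an $\sr{M}$-matrix makes \lem{reflection map 1} go through for the partial reflection problem, everything downstream reduces to the short linear-algebraic contradiction above.
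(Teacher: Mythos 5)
Your proof is correct and works within the same fluid-scaling framework as the paper, but the final linear-algebra step is handled by a genuinely different and somewhat cleaner route. The paper does not assume the fluid trajectory is linear: it keeps the raw fluid equations \eq{ZA 1}--\eq{ZA 3}, pushes the reflection bound $-\ol{\vc{Y}}^{(A)}_{A}(t) \le R_{A}^{-1}(t\ol{\vc{v}}_{A} + \ol{\vc{Z}}^{(A)}_{A}(0))$ into the $A^{c}$-coordinates, and then invokes Schur's formula for $R^{-1}$ to argue that the drift coefficient $\ol{\vc{v}}_{A^{c}} + P^{\rs{t}}_{A^{c},A} R_{A}^{-1} \ol{\vc{v}}_{A}$ must have a strictly negative entry, after first disposing of the easy case $\ol{v}_{k}<0$ for some $k\in A^{c}$. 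You instead observe that the partial Skorohod map applied to the linear input $t\ol{\vc{v}}$ is itself linear (scaling invariance plus uniqueness, which follows because $R_{A}$ is a nonsingular $\sr{M}$-matrix and hence a P-matrix, so the stationary LCP has a unique solution), write the limit slope $\vc{z}^{*}$ through that LCP, and derive a contradiction from $R^{-1}\ge 0$ alone, with no Schur complement and no case split on whether $A$ is empty. What your route buys: you identify the fluid limit trajectory explicitly as $t\vc{z}^{*}$, you unify the two cases the paper treats separately, and the monotonicity contradiction ($\widetilde{\vc{y}}\ge -R^{-1}\ol{\vc{v}}>\vc{0}$ versus $\widetilde{\vc{y}}_{A^{c}}=\vc{0}$) is shorter and more transparent. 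What the paper's route buys: it works directly with the fluid inequalities, so it does not need to establish linearity/uniqueness of the fluid trajectory, only the Lipschitz fluid scaling bound; this makes it marginally more robust if one later relaxes the $\sr{M}$-matrix assumption. The one place you should be slightly more careful is in flagging that the linearity of the fluid limit is a consequence of the scaling property of the Lipschitz Skorohod map together with uniqueness, rather than just the strong law for $J(\cdot)$; but this is standard and the claim is correct.
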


This lemma is proved in \sectn{ZA limit 1}, and will be used in the proof of \lem{W 1}. Similar to $\vc{X}(\cdot)$, we define a Markov process for the $S_{A}$-reflecting MMFN for $A \subset \sr{K}$ by $\vc{X}^{(A)}(\cdot) \equiv \{(\vc{Z}^{(A)}(t),J(t)); t \ge 0\}$. Then, \eq{A 2} is changed to
\begin{align}
\label{eq:A 5}
 \sr{A}f(\vc{X}^{(A)}(t)) & = f(\vc{X}^{(A)}(t)) \left(\frac {[K(\vc{\theta}) \vc{h}]_{J(t)}} {h(J(t))} + \sum_{\ell \in A} \sum_{k \in \sr{K}} \theta_{k} r_{k,\ell} y^{(A)}_{\ell}(t) \right).
\end{align}

Similar to \eq{EM 1}, taking \eq{A 5} into account, we can define an exponential martingale $E^{\vc{\theta}}_{A}(t)$ for the Markov process $\vc{X}^{(A)}(\cdot)$ as
\begin{align}
 \label{eq:EM 2}
  E^{\vc{\theta}}_{A}(t) & = \frac {e^{\br{\vc{\theta},\vc{Z}^{(A)}(t)}} h_{\vc{\theta}}(J(t))} {e^{\br{\vc{\theta},\vc{Z}^{(A)}(0)}} h_{\vc{\theta}}(J(0))} \exp\left(- \gamma(\vc{\theta}) t - \sum_{k \in A} \gamma_{k}(\vc{\theta}) Y^{(A)}_{k}(t)\right), \qquad t \ge 0.
\end{align}

\subsection{Stationary tail probability}
\label{sec:tail}

Recall that we have defined face $A$ by $S_{A} = \cup_{k \in A} \{\vc{z} \in S; z_{k} = 0\}$ for nonempty $A \subset \sr{K}$. For $n \ge 1$, let $\tau_{A}^{\ex}(n)$ and $\tau_{A}^{\re}(n)$ be the $n$-th exit and return times from and to $S_{A}$. Namely, let $\tau_{A}^{\re}(0)=0$ and, for $n \ge 1$,
\begin{align*}
 & \tau_{A}^{\ex}(n) = \inf\{t > \tau_{A}^{\re}(n-1); \vc{Z}(t) \in S \setminus S_{A}\},\\
 & \tau_{A}^{\re}(n) = \inf\{t > \tau_{A}^{\ex}(n); \vc{Z}(t) \in S_{A}\}.
\end{align*}
Note that $\tau_{A}^{\ex}(1) = 0$ when $\vc{Z}(0) \in S \setminus S_{A}$. To consider the finiteness of these hitting times, we note the following simple fact, which is proved in \sectn{reachable 1}. 

\begin{lemma}
\label{lem:reachable 1}
Under the assumptions of \lem{stability 2}, we have, for each $k \in \sr{K}$, (a)
\begin{align}
\label{eq:reachable 1}
  \dd{P}(\exists t > 0, \vc{X}(t) \in S_{k} \times \sr{J}|\vc{X}(0) = \vc{x}) > 0, \qquad \forall \vc{x} \in S \times \sr{J},
\end{align}
(b) $\nu(S_{k} \times \sr{J}) > 0$ for the stationary distribution $\nu$ of $\vc{X}(t)$. 
 \end{lemma}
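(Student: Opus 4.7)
The two parts of the lemma have quite different flavors.

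\emph{Part (b).} I would argue directly from the Palm identity in \lem{Palm 1}(a). Because the derivative $y_k(s)$ of $Y_k(s)$ (cf.~\eq{b 3}) vanishes off $\{Z_k(s)=0\}$ and is bounded by $c_k:=\max_j\mu_k(j)$, Fubini and stationarity give
\[
0<\dd{E}_\nu(Y_k(1))=\int_0^1 \dd{E}_\nu\bigl(y_k(s)\,1(Z_k(s)=0)\bigr)\,ds\le c_k\int_0^1\dd{P}_\nu(Z_k(s)=0)\,ds=c_k\,\dd{P}_\nu(Z_k(0)=0),
\]
so $\nu(S_k\times\sr{J})=\dd{P}_\nu(Z_k(0)=0)>0$, as required.

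\emph{Part (a).} I would argue by contradiction. Fix $\vc{x}=(\vc{z},j)$ and assume $\dd{P}_{\vc{x}}(Z_k(t)>0\ \forall\,t>0)=1$, so that $Y_k(t)\equiv 0$ a.s.\ by \eq{ZY 1}. Set $A=\sr{K}\setminus\{k\}$. The $A$-block of \eq{Z 2} reads $\vc{Z}_A(t)=\vc{z}_A+\vc{V}_A(t)+R_{A,A}\vc{Y}_A(t)$, and $R_{A,A}^{-1}\ge 0$ because $R_{A,A}$ is a principal submatrix of the $\sr{M}$-matrix $R$; solving for $\vc{Y}_A(t)$ and substituting into the $k$-th coordinate of \eq{Z 2} yields, with $\vc{w}^{\rs{t}}:=R_{\{k\},A}R_{A,A}^{-1}$,
\[
Z_k(t)-\vc{w}^{\rs{t}}\vc{Z}_A(t)=\bigl(z_k-\vc{w}^{\rs{t}}\vc{z}_A\bigr)+\bigl(V_k(t)-\vc{w}^{\rs{t}}\vc{V}_A(t)\bigr).
\]
The entries of $R_{\{k\},A}$ are $-p_{\ell,k}\le 0$, so $\vc{w}\le\vc{0}$ and the left side is at least $Z_k(t)\ge 0$; by \lem{stability 1} it is also tight in $t$.

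By the Schur-complement identity applied to the block decomposition of $R$ around coordinate $k$,
\[
\ol{v}_k-\vc{w}^{\rs{t}}\ol{\vc{v}}_A=s\,[R^{-1}\ol{\vc{v}}]_k,\qquad s:=R_{k,k}-\vc{w}^{\rs{t}}R_{A,\{k\}}=1/[R^{-1}]_{k,k}>0
\]
(the strict positivity of $s$ coming from $R^{-1}=\sum_{n\ge 0}(P^{\rs{t}})^n\ge I$). The stability condition \eq{stability 1} gives $[R^{-1}\ol{\vc{v}}]_k<0$, hence $\ol{v}_k-\vc{w}^{\rs{t}}\ol{\vc{v}}_A<0$. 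The finite-state ergodicity of $J(\cdot)$ and the SLLN then give $V_m(t)/t\to\ol{v}_m$ a.s.\ for every $m$, so the right-hand side of the displayed identity tends almost surely to $-\infty$. This contradicts the non-negativity and tightness of the left-hand side, so the hypothesis fails and $\dd{P}_{\vc{x}}(\exists\,t>0:Z_k(t)=0)>0$, proving (a).

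The crux is spotting the Schur-complement row $\vc{w}^{\rs{t}}=R_{\{k\},A}R_{A,A}^{-1}$: this particular linear combination simultaneously eliminates every $Y_\ell$ with $\ell\in A$ (that is the whole point of dividing into blocks), preserves non-negativity (because $\vc{w}\le\vc{0}$), and reduces the mean drift to $s\,[R^{-1}\ol{\vc{v}}]_k$, which is exactly the quantity controlled by the stability hypothesis. Once that combination is in hand, the argument is a clean tightness-versus-linear-divergence comparison, and I expect no further obstacle.
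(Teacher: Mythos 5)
Your proof is correct, but it takes a genuinely different route from the paper's for part (a), and a shorter route for part (b).

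For (b) the paper argues from part (a): every path reaches $S_k\times\sr{J}$ with positive probability, so $\nu(S_k\times\sr{J})=0$ would make $\vc{X}(\cdot)$ transient, contradicting the existence of $\nu$. Your argument from $\dd{E}_\nu(Y_k(1))>0$ is sound, but it re-proves Lemma~\ref{lem:Palm 1}(b) almost verbatim --- you could simply cite that lemma, which states $\dd{P}_\nu(\vc{Z}(0)\in S_k)>0$, i.e.\ exactly $\nu(S_k\times\sr{J})>0$.

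For (a) the paper argues probabilistically: stability forces some background state $i$ with $\alpha_k(i)<\mu_k(i)$, the sojourn in state $i$ is exponential and hence can be arbitrarily long, so $Z_k$ drains to $0$ with positive probability; irreducibility of $J(\cdot)$ then extends this to an arbitrary initial background state. Your argument is a deterministic pathwise one: eliminate $\vc{Y}_A$ by the Schur complement around coordinate $k$, observe that $Z_k(t)-\vc{w}^{\rs{t}}\vc{Z}_A(t)\ge 0$ because $\vc{w}\le\vc{0}$, and let the SLLN drive the free (unreflected) side to $-\infty$ via $\ol{v}_k-\vc{w}^{\rs{t}}\ol{\vc{v}}_A=s[R^{-1}\ol{\vc{v}}]_k<0$. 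This buys something: the paper's phrase ``$\alpha_k(i)$ is the maximal in-flow rate at station $k$ under state $i$'' is not literally true when upstream buffers are overfull (the instantaneous in-flow can exceed $\alpha_k(i)$), and making that step rigorous requires a transient fluid-limit argument which the paper elides; your Schur-complement route sidesteps this entirely. One small remark: your appeal to tightness is unnecessary --- non-negativity of $Z_k(t)-\vc{w}^{\rs{t}}\vc{Z}_A(t)$ alone already contradicts divergence of the right-hand side to $-\infty$.
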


By this lemma and the existence of the stationary distribution, $\tau_{A}^{\ex}(n)$ and $\tau_{A}^{\re}(n)$ are finite $a.s.$ for all $n \ge 1$. Assume that $\vc{X}(0)$ is subject to the stationary distribution $\nu$, and define a point process $N^{\ex}_{A}$ if $\tau_{A}^{\ex}(n)$'s are finite as
\begin{align*}
  N^{\ex}_{A}(B) \equiv \sum_{n=1}^{\infty} 1(\tau_{A}^{\ex}(n) \in B), \qquad B \in \sr{B}(\dd{R}_{+}).
\end{align*}
Obviously, we have
\begin{align}
\label{eq:tau A1}
  N^{\ex}_{A}(B) \equiv \sum_{t \in B} 1(\vc{Z}(t-) \in S_{A}, \vc{Z}(t) \in S \setminus S_{A}), \qquad B \in \sr{B}(\dd{R}_{+}).
\end{align}

Let $\lambda^{\ex}_{A} \equiv \dd{E}_{\nu}(N^{\ex}_{A}((0,1]))$, then it must be positive and finite, and $N^{\ex}_{A}$ is a stationary point process jointly with $\vc{X}(\cdot)$, which has the finite intensity $\lambda^{\ex}_{A}$. Recall that we have extended process $\vc{X}(\cdot)$ on the whole line and introduced the shift operator group \{$\Theta_{t}; t \in \dd{R}\}$ on $\Omega$ in \sectn{asymptotic 1}. Then, the shift operator group operates on $N^{\ex}_{A}$ as
\begin{align*}
 (N^{\ex}_{A}(B) \circ \Theta_{t})(\omega) = N^{\ex}_{A}(B+t)(\omega),
\end{align*}
for $\omega \in \Omega$, $s, t \in \dd{R}$, $B \in \sr{B}(\dd{R})$, where $B + t = \{x+t; x \in B\}$ for $t \in \dd{R}$. Since $(\vc{X}(\cdot), N^{\ex}_{A})$ is jointly stationary, we define a Palm distribution $\dd{P}^{\ex}_{A}$ concerning $N^{\ex}_{A}$ on the measurable space $(\Omega,\sr{F})$ similar to \eq{Palm 1} as
\begin{align}
\label{eq:Palm 2}
  \dd{P}^{\ex}_{A}(D) = (\lambda^{\ex}_{A})^{-1} \dd{E}_{\nu}\left(\int_{0}^{1} 1_{D} \circ \Theta_{u} N^{\ex}_{A}(du)\right), \qquad D \in \sr{F},
\end{align}
where the index $0$ specifies that this Palm distribution has a unit mass at the origin. It is known that $\{\vc{X}(\tau_{A}^{\ex}(n)-); n \in \dd{Z}\}$ is stationary under $\dd{P}^{\ex}_{A}$. 

Hence, define
\begin{align*}
  \nu^{\ex}_{A}(B) = \dd{P}^{\ex}_{A}(\vc{X}(\tau_{A}^{\ex}(n)-) \in B), \qquad B \in \sr{B}(S) \times 2^{\sr{J}},
\end{align*}
then $\nu^{\ex}_{A}$ is the stationary distribution of $\vc{X}(\tau_{A}^{\ex}(n)-)$. Thus, the Palm inversion formula, which is also-called the cycle formula, yields
\begin{align}
\label{eq:cycle 1}
 & \dd{P}_{\nu}(\vc{X}(0) \in B) = \lambda^{\ex}_{A} \dd{E}_{\nu^{\ex}_{A}} \left( \int_{0}^{\tau_{A}^{\ex}(2)} 1(\vc{X}(u) \in B) du \right), \quad B \in \sr{B}(S) \times 2^{\sr{J}},
\end{align}
where we note that $\tau_{A}^{\ex}(1) = 0 < \tau_{A}^{\re}(1) < \tau_{A}^{\ex}(2)$ almost surely under $\dd{P}^{\ex}_{A}$. In what follows, we simply denote $\tau_{A}^{\re}(1)$ by $\tau_{A}^{\re}$. We note the following fact.
\begin{lemma}\rm
\label{lem:finite 1}
For nonempty $A \subset \sr{K}$ and $\vc{\theta} \in \dd{R}^{d}$, $\dd{E}_{\nu^{\ex}_{A}}(e^{\br{\vc{\theta}, \vc{Z}(0)}}) < \infty$ if and only if $\dd{E}_{\nu}(e^{\br{\vc{\theta}, \vc{Z}(0)}} 1(\vc{Z}(0) \in S_{A})) < \infty$.
\end{lemma}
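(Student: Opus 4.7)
The plan is to apply the Palm inversion formula \eq{cycle 1} with the test function $g(\vc{x}) = e^{\br{\vc{\theta}, \vc{z}}} 1(\vc{z} \in S_A)$. Under $\dd{P}^{\ex}_A$, the set $\{u \in [0, \tau_A^{\ex}(2)] : \vc{Z}(u) \in S_A\}$ is exactly $\{0\} \cup [\tau_A^{\re}, \tau_A^{\ex}(2))$, which has Lebesgue measure $\tau_A^{\ex}(2) - \tau_A^{\re}$. Consequently \eq{cycle 1} yields the identity
\begin{align*}
  \dd{E}_\nu(e^{\br{\vc{\theta}, \vc{Z}(0)}} 1(\vc{Z}(0) \in S_A)) = \lambda_A^{\ex}\, \dd{E}_{\nu^{\ex}_A}\Big(\int_{\tau_A^{\re}}^{\tau_A^{\ex}(2)} e^{\br{\vc{\theta}, \vc{Z}(u)}}\, du\Big),
\end{align*}
which reduces the lemma to sandwiching the right-hand integral between constant multiples of $e^{\br{\vc{\theta}, \vc{Z}(0)}}$.

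The sandwich comes from the Lipschitz continuity of $\vc{Z}$ given by \lem{reflection map 1}: there exists a deterministic constant $L$, depending only on $\{v_k(i)\}$ and $R$, such that $|\vc{Z}(u) - \vc{Z}(0)| \le Lu$, hence
\[
  e^{-|\vc{\theta}| L u}\, e^{\br{\vc{\theta}, \vc{Z}(0)}} \le e^{\br{\vc{\theta}, \vc{Z}(u)}} \le e^{|\vc{\theta}| L u}\, e^{\br{\vc{\theta}, \vc{Z}(0)}}.
\]
For the direction $(\Rightarrow)$, the upper bound combined with a uniform-in-$\vc{X}(0)$ exponential moment estimate for $\tau_A^{\ex}(2)$ yields $\dd{E}_{\nu^{\ex}_A}[\int \cdots du \mid \vc{X}(0)] \le C\, e^{\br{\vc{\theta}, \vc{Z}(0)}}$, and the conclusion follows by integration. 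For the direction $(\Leftarrow)$, fix $\delta > 0$ and work on the event $E_\delta = \{\tau_A^{\re} \le \delta,\ \tau_A^{\ex}(2) - \tau_A^{\re} \ge \delta\}$: restricting the integral to $[\tau_A^{\re}, \tau_A^{\re} + \delta]$ and using the lower bound gives
\[
  \int_{\tau_A^{\re}}^{\tau_A^{\ex}(2)} e^{\br{\vc{\theta}, \vc{Z}(u)}}\, du \ge \delta\, e^{-2|\vc{\theta}| L \delta}\, e^{\br{\vc{\theta}, \vc{Z}(0)}}\, 1(E_\delta),
\]
so a uniform positive lower bound $\dd{P}_{\vc{X}(0)}(E_\delta) \ge c > 0$ closes the argument.

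Both uniform bounds rest on the finite state space, irreducibility, and bounded jump rates of the background chain $J$, together with the piecewise-linear evolution of $\vc{Z}$ between $J$-jumps. The exponential moment on $\tau_A^{\ex}(2)$ follows because the hitting time by the embedded $J$-chain of a set of states that trigger exit from $S_A$ has all exponential moments uniformly in the starting state. The lower bound on $\dd{P}_{\vc{X}(0)}(E_\delta)$ is obtained by constructing, for each possible type of $\vc{X}(0) \in S_A \times \sr{J}$, a positive-probability $J$-path of duration $O(\delta)$ that first drives $\vc{Z}$ back to $S_A$ (possible by \lem{reachable 1}) and then maintains $\vc{Z}$ on $S_A$ for at least an additional $\delta$ units of time. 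The main obstacle is this uniformity in $\vc{X}(0)$: since $\vc{Z}(0)$ can lie on any of the finitely many faces $S_k$ for $k \in A$ with arbitrary positive entries elsewhere, the positive-probability $J$-path must be constructed face by face, which reduces to a finite case analysis exploiting conditions (\sect{MMFN}a), (\sect{MMFN}b), and the $\sr{M}$-matrix structure of $R$.
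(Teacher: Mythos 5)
Your approach via the Palm inversion (cycle) formula is genuinely different from the paper's. The paper instead applies Papangelou's formula (rate conservation) to write $\lambda^{\ex}_A \dd{E}_{\nu^{\ex}_A}(e^{\br{\vc{\theta},\vc{Z}(0)}}) = \dd{E}_\nu\bigl(\Lambda^{\ex}_A(0)\, e^{\br{\vc{\theta},\vc{Z}(0)}}\bigr)$ for an explicit stochastic intensity $\Lambda^{\ex}_A$ that is \emph{uniformly bounded} and supported on $\{\vc{Z}(0-)\in S_A\}$; the implication ``right $\Rightarrow$ left'' is then immediate from boundedness of $\Lambda^{\ex}_A$, and the reverse is handled by an irreducibility argument on the finite chain $J$. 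Your reduction to $\dd{E}_\nu\bigl(e^{\br{\vc{\theta},\vc{Z}(0)}}1(\vc{Z}(0)\in S_A)\bigr)=\lambda^{\ex}_A\,\dd{E}_{\nu^{\ex}_A}\bigl[\int_{\tau_A^{\re}}^{\tau_A^{\ex}(2)} e^{\br{\vc{\theta},\vc{Z}(u)}}\,du\bigr]$ is correct, and your $(\Leftarrow)$ direction is plausible, though the claimed uniformity of $\dd{P}_{\vc{X}(0)}(E_\delta)$ over the support of $\nu^{\ex}_A$ deserves more care than a finite case analysis over faces suggests.

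The $(\Rightarrow)$ direction contains a genuine gap. Integrating your pointwise upper bound gives
\[
  \int_{\tau_A^{\re}}^{\tau_A^{\ex}(2)} e^{\br{\vc{\theta},\vc{Z}(u)}}\,du \;\le\; e^{\br{\vc{\theta},\vc{Z}(0)}}\,\frac{e^{|\vc{\theta}|L\,\tau_A^{\ex}(2)}-1}{|\vc{\theta}|L},
\]
so the bound $\dd{E}_{\nu^{\ex}_A}[\int\cdots du\mid\vc{X}(0)]\le C\,e^{\br{\vc{\theta},\vc{Z}(0)}}$ actually requires $\dd{E}\bigl[e^{|\vc{\theta}|L\,\tau_A^{\ex}(2)}\mid\vc{X}(0)\bigr]\le C$, an exponential moment at the \emph{$\vc{\theta}$-dependent} rate $|\vc{\theta}|L$. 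That is not available for large $|\vc{\theta}|$: the dominant part of $\tau_A^{\ex}(2)$ is the excursion duration $\tau_A^{\re}$, the first time some $Z_k$ with $k\in A$ returns to $0$, and this is a first-passage time of the fluid process itself, not of the embedded $J$-chain; its exponential moments are finite only up to a critical rate governed by the Markov-additive drift. Your justification --- ``the hitting time by the embedded $J$-chain of a set of states that trigger exit from $S_A$'' --- controls only the sojourn $\tau_A^{\ex}(2)-\tau_A^{\re}$ \emph{on} $S_A$, not the excursion away from it. Since the lemma is asserted for all $\vc{\theta}\in\dd{R}^d$, this gap is essential; the paper's Papangelou route avoids it entirely because the bounded intensity yields an instantaneous identity with no random time to control.
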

\begin{proof}
Define $\Lambda^{\ex}_{A}(t)$ as
\begin{align*}
  \Lambda^{\ex}_{A}(t) = \sum_{j \in \sr{J} \setminus \{J(t-)\}} \sum_{k \in A} q_{J(t-),j} 1(\vc{Z}(t-) \in S_{A}, J(t)=j, a_{k}(t) > \mu_{k}(j)),
\end{align*}
then, from \eq{tau A1}, $\Lambda^{\ex}_{A}(t)$ is a stochastic intensity of $N^{\ex}_{A}(t)$, and therefore
\begin{align*}
  M_{A}(t) = N^{\ex}_{A}(t) - \int_{s}^{t} \Lambda^{\ex}_{A}(u) du, \qquad t \ge 0,
\end{align*}
is an $\dd{F}$-martingale. Hence, it follows from Papangelou formula (e.g., see Section 1.9.2 of \cite{BaccBrem2003}) that \begin{align*}
  \lambda^{\ex}_{A} \dd{E}_{\nu^{\ex}_{A}}(e^{\br{\vc{\theta}, \vc{Z}(0)}}) = \dd{E}_{\nu}(\Lambda^{\ex}_{A}(0) e^{\br{\vc{\theta}, \vc{Z}(0)}}).
\end{align*}
 Since $\Lambda^{\ex}_{A}(t)$ is bounded on $\vc{Z}(t-) \in S_{A}$, $\dd{E}_{\nu}(e^{\br{\vc{\theta}, \vc{Z}(0)}} 1(\vc{Z}(0) \in S_{A})) < \infty$ implies that $\dd{E}_{\nu^{\ex}_{A}}(e^{\br{\vc{\theta}, \vc{Z}(0)}}) < \infty$. For the opposite direction, define the random subset of $\sr{J}$ as
\begin{align}
\label{eq:q 1}
  \sr{J}^{\ex}_{A}(t) = \cup_{j \in \sr{J}} \cup_{k \in A} \{i \in \sr{J}; J(t)=j, q_{i,j} 1(a_{k}(t) > \mu_{k}(j)) > 0\}, \qquad t \in \dd{R},
\end{align}
then $\sr{J}^{\ex}_{A}(t) \ne \emptyset$ because $\vc{Z}(t)$ can not get out from $S_{A}$ otherwise. We now assume that $\dd{E}_{\nu}(1(\vc{Z}(0) \in S_{A}) e^{\br{\vc{\theta}, \vc{Z}(0)}}) = \infty$, then there is some $i \in \sr{J}$ such that
\begin{align*}
  \dd{E}_{\nu}(1(J(0-) = i, \vc{Z}(0-) \in S_{A}) e^{\br{\vc{\theta}, \vc{Z}(0)}})  = \dd{E}_{\nu}(1(J(0) = i, \vc{Z}(0) \in S_{A}) e^{\br{\vc{\theta}, \vc{Z}(0)}}) = \infty.
\end{align*}
If $i \in \sr{J}^{\ex}_{A}(t)$, then we have done because this together with \eq{q 1} imply that \linebreak $\dd{E}_{\nu}(\Lambda^{\ex}_{A}(0) e^{\br{\vc{\theta}, \vc{Z}(0)}}) = \infty$. If $i \not\in \sr{J}^{\ex}_{A}(t)$, then we can find a finite sequence of the state transitions of $J(t)$ from $i$ to $j \in \sr{J}^{\ex}_{A}(t)$ for $\vc{Z}(t)$ to stay in $S_{A}$ by the irreducibility of $J(t)$ and the fact that $Z(t)$ eventually gets out from $S_{A}$. Hence, $\dd{E}_{\nu}(\Lambda^{\ex}_{A}(0) e^{\br{\vc{\theta}, \vc{Z}(0)}}) = \infty$.
\end{proof} 

Let $C(x)$ be an element of $\sr{B}(\dd{R}_{+}^{d})$ for $x \ge 0$, then we apply change of measure to \eq{cycle 1}. For this, define $\tau_{x}$ as
\begin{align}
\label{eq:stopping 1}
  \tau_{x} = \inf\{t \ge 0; \vc{Z}(t) \in C(x)\}, \qquad x > 0.
\end{align}
Obviously, $\tau_{x}$ is an $\dd{F}$-stopping time. For $A \subset \sr{K}$, let
\begin{align*}
 & W_{A}(t) = \dd{E}_{\nu^{\ex}_{A}} \left(\left. \int_{t}^{\tau^{\re}_{A}} 1(\vc{Z}(u) \in C(x)) du \right|\sr{F}_{t} \right), 
\end{align*}
then it follows from \eq{EM 1} and \eq{cycle 1} that
\begin{align}
\label{eq:exp 1}
 &\dd{P}_{\nu}(\vc{Z}(0) \in C(x)) = \lambda^{\ex}_{A} \dd{E}_{\nu^{\ex}_{A}}(W_{A}(\tau_{x})1(\tau_{x} < \tau^{\re}_{A})) \nonumber\\
 & \quad = \lambda^{\ex}_{A} \widetilde{\dd{E}}^{\vc{\theta}}_{\nu^{\ex}_{A}} \Big[ \frac {h_{\vc{\theta}}(J(0))} {h_{\vc{\theta}}(J(\tau_{x}))} 1(\vc{Z}(0) \in S_{A}) e^{\br{\vc{\theta},\vc{Z}(0)}} 1( \tau_{x} < \tau^{\re}_{A}) W_{A}(\tau_{x}) \nonumber\\
 & \hspace{15ex} \times \exp\Big(- \br{\vc{\theta}, \vc{Z}(\tau_{x})}+{\gamma}(\vc{\theta}) \tau_{x} + \sum_{k \in \sr{K} \setminus A} \gamma_{k}(\vc{\theta}) Y_{k}(\tau_{x}) \Big) \Big],
\end{align}
because $Y_{k}(\tau_{x})=0$ for $k \in A$ if $\tau_{x} < \tau^{\re}_{A}$.

We will consider the following two cases for $C(x)$.
\begin{itemize}
\item [(\sect{lower}a)] Let $B_{0} \subset \dd{R}_{+}^{d}$ be a compact set, and let $\vc{c} \in \dd{R}_{+}^{d}$ be a unit vector, that is, $\br{\vc{c},\vc{c}} = 1$. For $x > 0$, define $C(x)$ as
\begin{align*}
   C(x) = x \vc{c} + B_{0} \equiv \{x \vc{c} + \vc{y}; \vc{y} \in B_{0}\}.
\end{align*}
Since $\vc{Z}(t)$ is continuous in $t$ and $\vc{Z}(\tau_{x}) - x \vc{c} \in B_{0}$, we have, for each $\vc{\theta} \in \dd{R}^{d}$, 
\begin{align*}
  \min\{\br{\vc{\theta},\vc{z}}, \vc{z} \in B_{0}\} \le \br{\vc{\theta}, \vc{Z}(\tau_{x})} - x \br{\vc{\theta}, \vc{c}} \le \max\{\br{\vc{\theta},\vc{z}}, \vc{z} \in B_{0}\},
\end{align*}
and therefore, for $\vc{z}, \vc{z}' \in \partial C(x)$,
\begin{align*}
  |\br{\vc{\theta}, \vc{Z}(\tau_{x}) - x\vc{c}}| & \le \max(\max\{\br{\vc{\theta},\vc{z}}, \vc{z} \in B_{0}\}, \max\{\br{\vc{\theta},-\vc{z}}, \vc{z} \in B_{0}\})\\
  & \le \|\vc{\theta}\| \max \{\|\vc{z}\|; \vc{z} \in B_{0}\},
\end{align*}
where we have used the Schwarz's inequality $|\br{\vc{\theta},\pm \vc{z}}| \le \|\vc{\theta}\| \|\vc{z}\|$. Hence, choosing $\delta = \max \{\|\vc{z}\|; \vc{z} \in B_{0}\}$, we have
\begin{align}
\label{eq:C 1}
  |\br{\vc{\theta},\vc{Z}(\tau_{x})} - x\br{\vc{\theta},\vc{c}}| < \delta \|\vc{\theta}\|.
\end{align}

\item [(\sect{lower}b)] For a unit vector $\vc{c} \ge 0$ and $x > 0$, let $C(x) = \{\vc{z} \in \dd{R}_{+}^{d}; \br{\vc{c}, \vc{z}} \ge x\}$. \\
In this case, $\br{\vc{c}, \vc{Z}(\tau_{x})} = x$, so we choose $\vc{\theta} = \alpha \vc{c}$ for $\alpha > 0$, then
\begin{align}
\label{eq:C 2}
  \br{\vc{\theta}, \vc{Z}(\tau_{x})} = \alpha x = x \br{\vc{\theta}, \vc{c}}.
\end{align}
\end{itemize}

We note the following fact, which is proved in \sectn{W 1}.
\begin{lemma}\rm
\label{lem:W 1}
For $C(x)$ of (\sect{lower}a) and (\sect{lower}b) and nonempty $A \subset \sr{K}$, if \eq{C 1} holds, then there are constants $c_{A}^{-}, c_{A}^{+} > 0$ such that
\begin{align}
\label{eq:W x 1}
  c_{A}^{-} < W_{A}(\tau_{x}) < c_{A}^{+} \quad \mbox{on } \tau_{x} < \tau^{\re}_{A}.
\end{align}
\end{lemma}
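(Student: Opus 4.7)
My plan is to prove both bounds using the uniform Lipschitz continuity of the Skorohod map combined with the structure of the reflection on $[\tau_{x},\tau_{A}^{\re}]$, during which $\vc{Z}(\cdot)$ stays outside $S_{A}$ and so $Y_{k}$ is constant for every $k\in A$. Since $J(\cdot)$ has finitely many states and the rates $\lambda_{k}(i),\mu_{k}(i)$ are bounded, the net flow $\vc{V}(\cdot)$ is Lipschitz, hence so is $\vc{Z}(\cdot)$ with some universal constant $M>0$ independent of the sample path.

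For the lower bound I would fix a small deterministic $\delta_{0}>0$ and work on the event that the background chain does not jump during $[\tau_{x},\tau_{x}+\delta_{0}]$ and no new boundary of $S$ is touched in this interval --- an event with conditional probability at least some $p_{0}>0$ bounded below uniformly in $x$ and in $\omega$ on $\{\tau_{x}<\tau_{A}^{\re}\}$. On this event the trajectory $\vc{Z}(\tau_{x}+s)$ is a deterministic line segment with velocity $(v_{k}(J(\tau_{x})))_{k\in\sr{K}}$; by choosing $\delta_{0}$ smaller than the distance from $\vc{Z}(\tau_{x})$ to $S_{A}$ divided by $M$ (a distance bounded below uniformly via (\ref{eq:C 1})), this segment stays outside $S_{A}$, and spends Lebesgue measure at least $\delta_{0}/2$ in $C(x)$ because $C(x)$ has nonempty interior adjacent to $\vc{Z}(\tau_{x})$ (the halfspace interior in case (\sect{lower}b), and the translated interior of $B_{0}$ in case (\sect{lower}a)). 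Taking expectations yields $W_{A}(\tau_{x})\ge c_{A}^{-}$.

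For the upper bound I would use the reflection identity on $[\tau_{x},\tau_{A}^{\re}]$,
\begin{equation*}
\vc{Z}(\tau_{A}^{\re})-\vc{Z}(\tau_{x})=\vc{V}(\tau_{A}^{\re})-\vc{V}(\tau_{x})+R_{\sr{K},\sr{K}\setminus A}\bigl(\vc{Y}_{\sr{K}\setminus A}(\tau_{A}^{\re})-\vc{Y}_{\sr{K}\setminus A}(\tau_{x})\bigr),
\end{equation*}
combined with stability \eq{stability 1} and the $\sr{M}$-matrix structure of $R$. In case (\sect{lower}a), the speed bound $M$ together with the bounded diameter of $C(x)$ forces each excursion of $\vc{Z}$ through $C(x)$ to have length at most $2\,\mathrm{diam}(B_{0})/M$, while the expected number of such excursions is uniformly bounded by a negative-drift argument for the $A$-components. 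In case (\sect{lower}b), I would instead project on $\vc{c}$: the scalar $\br{\vc{c},\vc{Z}(\cdot)}$ has negative long-run drift since $\br{\vc{c},R^{-1}\ol{\vc{v}}}<0$ by stability, so a standard negative-drift argument bounds the expected length of each excursion above level $x$ uniformly in $x$.

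The main obstacle is the upper bound in case (\sect{lower}b), because $\br{\vc{c},\vc{Z}(\cdot)}$ is not itself an autonomous Markov additive process --- it couples with the reflections at $S_{\sr{K}\setminus A}$. I would handle this by applying Dynkin's formula \eq{Dynkin 1} to an affine test function $f(\vc{z},j)=\br{\vc{c},\vc{z}}h(j)$ to get a semimartingale decomposition for $\br{\vc{c},\vc{Z}(t)}$, and then combine the resulting drift identity with the stability of the $S_{\sr{K}\setminus A}$-reflecting sub-MMFN from \sectn{partially} and \lem{ZA limit 1} to conclude the required uniform upper bound on $W_{A}(\tau_{x})$.
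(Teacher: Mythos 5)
Your approach is genuinely different from the paper's, and it has gaps that would be hard to close along the route you sketch. The paper's proof does not attempt to control excursion lengths pathwise. Instead, after making the same reduction you describe (on $(\tau_{A}^{\ex}(1),\tau_{A}^{\re}(1)]$ the path $\vc{Z}$ coincides with the $S_{A^{c}}$-reflecting process $\vc{Z}^{(A^{c})}$, so $W_A(\tau_x)=\dd{E}_{\nu^{\ex}_A}(\int_{\tau_x}^{\tau^{\re}_A}1(\vc{Z}^{(A^c)}(u)\in C(x))\,du\,|\,\sr{F}_{\tau_x})$), the paper bounds the occupation time by the layer-cake formula $\int_0^{\infty}\dd{P}_{\nu^{\ex}_A}(\sigma_x(t)<\infty\,|\,\sr{F}_{\tau_x})\,dt$, where $\sigma_x(t)$ is the first return to $C(x)$ after $t+\tau_x$, and then applies the change of measure by $E^{\vc{\theta}}_{A^c}$ from \eq{EM 2} with $\vc{\theta}\in\Gamma^{-}_{A^c}$ to get an $O(e^{\gamma(\vc{\theta})t})$ bound with $\gamma(\vc{\theta})<0$. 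Integrating in $t$ gives the uniform constant $c_A^+$. \lem{ZA limit 1} enters only to guarantee the occupation time $\int_{\tau_x}^{\infty}1(\vc{Z}^{(A^c)}(u)\in C(x))\,du$ is finite a.s., not to deliver a drift bound.

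Your upper-bound sketch has a concrete error and a real gap. The error: a uniform Lipschitz (upper speed) bound $M$ gives a \emph{lower} bound $\mathrm{diam}(B_0)/M$ on the time to traverse $C(x)$, not an upper bound; an upper bound on the duration of a single visit would need a positive \emph{lower} bound on the speed in a suitable escape direction, which the MMFN does not provide in general (the velocity $\vc{v}(J(t))$ can vanish or point tangentially for some background states). The gap: even granting bounded excursion lengths, bounding the \emph{expected number} of excursions by ``a negative-drift argument'' is exactly the part that requires quantitative control of the return probability, and the standard way to get that here is the exponential change of measure you would end up reinventing — in particular a mere mean-drift inequality is not enough to make $\int_0^{\infty}\dd{P}(\sigma_x(t)<\infty)\,dt$ finite with an $x$-uniform constant. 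Your Dynkin-formula fallback for case (\sect{lower}b) is pointing in the right direction but is not spelled out, and the affine test function $f(\vc{z},j)=\br{\vc{c},\vc{z}}h(j)$ only yields a first-moment identity, not the exponential tail bound needed. Your lower-bound argument also has a hole: $\vc{Z}(\tau_x)$ may already lie on a face $S_k$ with $k\in A^c$ (especially in case (\sect{lower}a) where only the $k$-th coordinate is pushed to infinity), in which case the conditioning event ``no boundary of $S$ is touched on $[\tau_x,\tau_x+\delta_0]$'' is empty, and the velocity may be tangent to $\partial C(x)$ so that the trajectory does not enter the interior for positive Lebesgue time. The paper instead obtains $c_A^-$ by comparison with the free process $\vc{Z}^{(\emptyset)}$ as $x\to\infty$.
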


Due to this lemma, we can get upper and lower bounds for the tail decay rate from \eq{exp 1}. Since $1(\tau_{x} < \tau^{\re}_{A}) \le 1$, the upper bound is easily obtained, but the results are essentially the same as \thr{upper 1}, so we omit to discuss it. For the lower bound, we need to show that
\begin{align}
\label{eq:tau lower 1}
  \widetilde{\dd{P}}_{\nu^{\ex}_{A}} (\tau_{x} < \tau^{\re}_{A}) > 0.
\end{align}
This should be carefully considered, which will be done in \sectn{lower 1}.

\subsection{Proof of \thr{lower 1}}
\label{sec:lower 1}

Our starting point is \eq{exp 1}. In either $C(x) = x \vcn{e}_{k} + B_{0}$ for $x > 0$ or $C(x) = \{\vc{z} \in \dd{R}_{+}^{d}; \br{\vc{c}, \vc{z}} \ge x\}$, we have \eq{C 1}. Hence, by \lem{W 1}, \eq{exp 1} yields
\begin{align}
\label{eq:exp 2}
 e^{x \br{\vc{\theta}, \vc{c}} + \delta\|\vc{\theta}\|}\dd{P}_{\nu}(\vc{Z}(0) \in C(x)) \ge \lambda^{\ex}_{A} \widetilde{\dd{E}}^{\vc{\theta}}_{\nu^{\ex}_{A}} & \Big[ \frac {h_{\vc{\theta}}(J(0))} {h_{\vc{\theta}}(J(\tau_{x}))} 1(\vc{Z}(0) \in S_{A}) 1( \tau_{x} < \tau^{\re}_{A}) c_{A}^{-}\nonumber\\
 & \quad \times \exp\Big({\gamma}(\vc{\theta}) \tau_{x} + \sum_{k \in \sr{K} \setminus A} \gamma_{k}(\vc{\theta}) Y_{k}(\tau_{x}) \Big) \Big].
\end{align}
Since $h_{\vc{\theta}}(j)$ for $j \in \sr{J}$ are positive, we can get a finite and positive lower bound from \eq{exp 2} if the following conditions hold.
\begin{itemize}
\item [(\sect{lower}c)] \eq{tau lower 1} holds, that is, $\widetilde{\dd{P}}_{\nu^{\ex}_{A}} (\tau_{x} < \tau^{\re}_{A}) > 0$.
\item [(\sect{lower}d)] $\vc{\theta} \in \Gamma^{+}_{\sr{K} \setminus A}$, that is, $\gamma(\vc{\theta}) \ge 0$, $\gamma_{k}(\vc{\theta}) \ge 0$ for $k \in \sr{K} \setminus A$.
\end{itemize}

There is a trade off between conditions (\sect{lower}c) and (\sect{lower}d). Namely, if $A$ is larger, then (\sect{lower}c) is more restrictive because $\tau^{\re}_{A}$ is smaller, while (\sect{lower}d) is less restrictive. We now prove (a) and (b) separately.

\noindent (a) We choose $A = \{k\}$ for \eq{exp 2}. Then, (\sect{lower}d) is satisfied for $\vc{\theta} \in G_{k}$ by its definition. By \lem{stability 3}, stations $\ell \ne k$ are stable because $\ol{v}^{\vc{\theta}}_{\ell} = \frac {\partial} {\partial \theta_{\ell}} \gamma(\vc{\theta}) < 0$ for $\ell \ne k$. Hence, by \lem{stability 2}, $[R^{-1} \nabla \gamma(\vc{\theta})]_{k} > 0$ implies that station $k$ is unstable. Thus, under $\widetilde{\dd{P}}_{\nu^{\ex}_{\{k\}}}$, $Z_{k}(t)$ diverges as $t \to \infty$ while $Z_{\ell}(t)$'s for $\ell \ne k$ have the $(d-1)$-dimensional joint stationary distribution. This implies (\sect{lower}c). Thus, we have \eq{lower 1} from \eq{exp 2}.

\noindent (b) We choose $A = \sr{K}$ for \eq{exp 2}. So, $\sr{K} \setminus A = \emptyset$. Since $\vc{\theta} \in {\rm Corn}(\overleftarrow{\Gamma}^{-} \cap \partial \Gamma^{-})$, we have, by \eq{new mean v 1},
\begin{align*}
  \ol{\vc{v}}^{\vc{\theta}} = \nabla \gamma(\vc{\theta}) \ge 0.
\end{align*}
Hence, $R^{-1} \ol{\vc{v}}^{\vc{\theta}} \ge 0$, and therefore all stations are unstable under $\widetilde{\dd{P}}_{\nu^{\ex}_{\sr{K}}}$. This implies (\sect{lower}c), and we get \eq{lower 2}.

\section{Proofs of lemmas} \setnewcounter
\label{sec:proof}

\subsection{Proof of \lem{b 1}}
\label{sec:b 1}

\begin{proof}
From \eq{Z 1}, \eq{b 1} and the fact that $z_{k}(t) \ne 0$ implies $Z_{k}(t) \ne 0$, equivalently, $Z_{k}(t) > 0$, by \eq{b 2}, we have, for time $t$ at which $Z_{k}(t)$ and $B_{\ell}(t)$ are differentiable for all $k, \ell \in \sr{K}$. 
\begin{align*}
  z_{k}(t) & = (a_{k}(t) - b_{k}(t)) 1(z_{k}(t) \ne 0) = \left(a_{k}(t) - \mu_{k}(J(t))\right) 1(Z_{k}(t) > 0)\\
  & = \left(a_{k}(t) - \mu_{k}(J(t))\right) - \left(a_{k}(t) - \mu_{k}(J(t))\right)1(Z_{k}(t)=0).
\end{align*}
Differentiating both sides of \eq{Z 1}, we have that $z_{k}(t)=0$ implies $a_{k}(t) = b_{k}(t)$. Hence, $a_{k}(t) - \mu_{k}(J(t)) = b_{k}(t) - \mu_{k}(J(t)) \le 0$ on $Z_{k}(t) = 0$ by \eq{b 1}. Thus, we have that
\begin{align*}
  z_{k}(t) & = a_{k}(t) - \mu_{k}(J(t)) + 0 \vee \left(\mu_{k}(J(t)) - a_{k}(t)\right)1(Z_{k}(t)=0)\\
  & = a_{k}(t) - \mu_{k}(J(t))1(Z_{k}(t)>0) + [(- \mu_{k}(J(t)) \vee (-a_{k}(t))]1(Z_{k}(t)=0)\\
  & = a_{k}(t) - [\mu_{k}(J(t))1(Z_{k}(t)>0) + (\mu_{k}(J(t)) \wedge a_{k}(t)) 1(Z_{k}(t)=0)],
\end{align*}
where $u \vee v = \max(u,v)$ and $u \wedge v = \min(u,v)$ for $u,v \in \dd{R}$. This and $z_{k}(t) = a_{k}(t) - b_{k}(t)$ conclude the equality in \eq{b 3}. It remains to prove that $b_{k}(t) \ge 0$. Let $\sr{K}_{0}(t) = \{k \in \sr{K}; Z_{k}(t) = 0\}$, then $b_{k}(t) = \mu_{k}(J(t)) \ge 0$ for $k \in \sr{K} \setminus \sr{K}_{0}$ by \eq{b 3}. On the other hand, on $Z_{k}(t)=0$, $a_{k}(t) - b_{k}(t) = z_{k}(t) = 0$ implies that,
\begin{align*}
  0 = \lambda_{k}(J(t)) + \sum_{\ell \in \sr{K}} b_{\ell}(t) p_{\ell,k} - b_{k}(t) \ge \sum_{\ell \in \sr{K}_{0}} b_{\ell}(t) p_{\ell,k} - b_{k}(t), \qquad k \in \sr{K}_{0}.
\end{align*}
Thus, we have $\vc{b}_{\sr{K}_{0}}(t) \ge P_{\sr{K}_{0}, \sr{K}_{0}} \vc{b}_{\sr{K}_{0}}(t)$, where $\vc{b}_{\sr{K}_{0}}(t)$ is $\sr{K}_{0}$-dimensional vector whose $\ell$-th entry is $b_{\ell}(t)$ for $\ell \in \sr{K}_{0}$, and $P_{\sr{K}_{0}, \sr{K}_{0}}$ is the $\sr{K}_{0} \times \sr{K}_{0}$ submatrix of $P$. Since $P$ and therefore $P_{\sr{K}_{0}, \sr{K}_{0}}$ are strictly substochastic, $\vc{b}_{\sr{K}_{0}}(t) \ge (P_{\sr{K}_{0}, \sr{K}_{0}})^{n} \vc{b}_{\sr{K}_{0}}(t) \to 0$ as $n \to \infty$. Thus, we have that $b_{k}(t) \ge 0$ for $k \in \sr{K}_{0}$, which completes the proof.
\end{proof}

\subsection{Proof of \lem{Dynkin 1}}
\label{sec:Dynkin 1}

Define $M(t)$ for $t \ge 0$ as
\begin{align*}
  M(t) & = \int_{0}^{t} g(\vc{Z}(s)) [h(J(s)) - \dd{E}\left(\left. h(J(s)) \right| \sr{F}_{s-}\right)] dN(s)\\
  & \quad + \int_{0}^{t} g(\vc{Z}(s)) \dd{E}\left(\left. \Delta h(J(s)) \right| \sr{F}_{s-}\right)] (dN(s) + q_{J(s-),J(s-)} ds),
\end{align*}
then $M(t)$ is an $\sr{F}_{t}$-martingale because $\vc{Z}(t)$ is continuous in $t$ because $- q_{J(s-),J(s-)} ds$ is the compensator of $d N(s)$ and $\dd{E}(|M(t)|) < \infty$ for each $t \ge 0$. Hence,
\begin{align}
\label{eq:jump 1}
 & \int_{0}^{t} g(\vc{Z}(s)) \Delta h(J(s)) dN(s) \nonumber\\
 & \quad = \int_{0}^{t} g(\vc{Z}(s)) \left[ h(J(s)) - \dd{E}\left(\left. h(J(s)) \right| \sr{F}_{s-}\right) + \dd{E}\left(\left. \Delta h(J(s)) \right| \sr{F}_{s-}\right) \right] dN(s) \nonumber\\
 & \quad = \int_{0}^{t} g(\vc{Z}(s)) \dd{E}\left(\left. \Delta h(J(s)) \right| \sr{F}_{s-}\right) (-q_{J(s-),J(s)}) ds + M(t) \nonumber\\
 & \quad = \int_{0}^{t} g(\vc{Z}(s)) \Bigg( \sum_{j \in \sr{J}} q_{J(s-),j} h(j) \Bigg) ds + M(t).
\end{align}
Since $Y_{\ell}(s)$ has the right-hand derivative $y_{\ell}(s)$ for $s \ge 0$, \eq{evolution 1} and \eq{jump 1} yield \eq{Dynkin 1}.

\subsection{Proof of \lem{gamma 1}}
\label{sec:gamma 1}

 (a) The convexity of $\gamma(\vc{\theta})$ is immediate from Theorem of \citet{Cohe1981} (see also Corollary 1.1 of \cite{Nuss1986}) since $K(\vc{\theta}) + aI$ is essentially nonnegative and its diagonal entries are linear functions of $\vc{\theta}$.\\
  (b) We note that $\gamma(\vc{\theta})$ is the root of the characteristic equation of $x \in \dd{C}$,
\begin{align*}
  | x I - K(\vc{\theta}))| = 0,
\end{align*}
whose real part is the largest among the roots of this equation. We also know that $\gamma(\vc{\theta})$ is unique as a Perron-Frobenius eigenvalue. Since $| x I - K(\vc{z}))|$ is a monic polynomial with order $m$ for complex $\vc{z} \in \dd{C}^{d}$, $x = \gamma(\vc{z})$ is at most an $m$-multiple valued analytic function for $\vc{z} \in \dd{C}^{d}$ (e.g., see Theorem 9.21 of \cite{Rudi1976}). On the other hand, $\gamma(\vc{\theta})$ is unique as a function on $\dd{R}^{d}$, and therefore  $\gamma(\vc{z})$ must be a single valued analytic on a neighborhood around the real axis in $\dd{C}^{d}$. Thus, $\gamma(\vc{\theta})$ is continuously partially differentiable with respect to $\theta_{i}$ for $\vc{\theta} \in \dd{R}^{d}$. Since $\vc{h}_{\vc{\theta}}$ is unique by the normalization \eq{normalize h1}, it can be obtained as a unique solution of the linear equations \eq{K 1} and \eq{normalize h1}. Hence, $\vc{h}_{\vc{\theta}}$ is obtained by Cramer's rule, and therefore it is continuously partially differentiable in entry-wise as long as $\gamma(\vc{\theta})$ does so. These facts complete the proof of (b).\\
(c) Premultiplying both sides of \eq{K 1} by $\vc{\xi}_{\vc{\theta}}$, we have, by \eq{normalize h1},
\begin{align*}
  \vc{\xi}_{\vc{\theta}} K(\vc{\theta}) \vc{h}_{\vc{\theta}} = \gamma(\vc{\theta})
\end{align*}
 Partially differentiating both sides of this formula concerning $\theta_{k}$ for $k \in \sr{K}$ in entry-wise, we have
\begin{align*}
 & \left( \nabla \xi_{\vc{\theta}}(1), \ldots, \nabla \xi_{\vc{\theta}}(m) \right) K(\vc{\theta}) \vc{h}_{\vc{\theta}} + \vc{\xi}_{\vc{\theta}} \left(\begin{array}{ccc} \hspace{-1ex} \diag{\vc{v}_{1}}, \ldots, \diag{\vc{v}_{d}} \hspace{-1ex} \end{array}\right) \vc{h}_{\vc{\theta}} \\
  & \qquad + \vc{\xi}_{\vc{\theta}} K(\vc{\theta}) \left(\begin{array}{c} \hspace{-1ex} \nabla h_{\vc{\theta}}(1)\\ \vdots\\ \nabla h_{\vc{\theta}}(m) \hspace{-1ex} \end{array}\right) = \left( \left(\frac {\partial} {\partial \theta_{1}} {\gamma}(\vc{\theta}) \right), \ldots, \left( \frac {\partial} {\partial \theta_{d}} {\gamma}(\vc{\theta})\right) \right) ,
\end{align*}
Since $K(\vc{0}) \vc{h}_{\vc{0}} = \vc{0}$, $\vc{\xi}_{\vc{0}} = \vc{\pi}$ and $\vc{\xi}_{\vc{0}} K(\vc{0}) = \vc{0}$, we have \eq{mean v 1}. 

\subsection{Proof of \lem{convex 1}}
\label{sec:convex 1}

  (a) This is immediate from (a) of \lem{gamma 1} since $\gamma(\vc{\theta})$ is finite for all $\vc{\theta} \in \dd{R}^{d}$ and convex. 
  
\noindent  (b) Since $\ol{\vc{v}} = \nabla \gamma(\vc{\theta})|_{\vc{\theta}=\vc{0}}$ by \lem{gamma 1}, $H_{\ol{\vc{v}}}$ contacts $\Gamma^{-}$ at the origin. Since $\Gamma^{-}$ is a convex set, $\Gamma^{-}$ is supported by $\{\vc{\theta} \in \dd{R}^{d}; \br{\ol{\vc{v}}, \vc{\theta}} \le 0\}$. 

\noindent (c) Under the stability condition \eq{stability 1}, there is at least one $k \in \sr{K}$ such that $\ol{v}_{k} < 0$, that is, $\br{\ol{\vc{v}}, \vc{e}_{k}} < 0$ because $\ol{\vc{v}} \ge \vc{0}$ implies $R^{-1} \ol{\vc{v}} \ge \vc{0}$, which is impossible by \eq{stability 1}. This implies the claim since $\ol{\vc{v}} = \nabla \gamma(\vc{\theta})|_{\vc{\theta}=\vc{0}}$, 
  
 \noindent (d) Let $\vc{u}_{k}$ be the $k$-th row of $R^{-1}$, then $\vc{u}_{k} R^{\ell} = 1(k = \ell)$ for $k, \ell \in \sr{K}$ by $R^{-1} R = I$, where $R^{\ell}$ is the $\ell$-th column of $R$. Since $\{\vc{u}_{k}; k \in \sr{K}\}$ is the base of the vector space $\dd{R}^{d}$, we can write $\vc{\theta} \in \dd{R}^{d}$ as
\begin{align*}
  \vc{\theta} = \sum_{k \in \sr{K}} a_{k} \vc{u}_{k}, \qquad \exists \vc{a} \equiv (a_{1}, \ldots, a_{d}) \in \dd{R}^{d}.
\end{align*}
We first take $\vc{a}$ such that  $a_{k} > 0$ and $a_{\ell} = 0$ for all $\ell \in \sr{K} \setminus \{k\}$, then $\vc{\theta} R^{\ell} = 0$ and therefore $\vc{\theta} \in H_{\ell}$ for $\ell \ne k$, so $\vc{\theta} \in \sr{R}_{k}$. Since $\alpha_{k} < \mu_{k}$ if and only if $\br{\vc{u}_{k}, \ol{\vc{v}}} = [R^{-1} \ol{\vc{v}}]_{k} < 0$, equivalently, $\br{\vc{\theta}, \ol{\vc{v}}} = a_{k} \br{\vc{u}_{k}, \ol{\vc{v}}} < 0$, which occurs only when $\sr{R}_{k}$ intersects $\Gamma^{-}$.

\subsection{Proof of \lem{Palm 1}}
\label{sec:Palm 1}

(a) For each $n \ge 1$, define the truncation function $w_{n}(x)$ from $\dd{R}_{+}$ to $\dd{R}$ as
\begin{align}
\label{eq:wn 1}
  w_{n}(x) & = x 1(x \le n) + \frac {2x - (x-n)^{2}} {2} 1(n < x \le n+1) + \frac {2n+1}{2} 1(n+1<x),
\end{align}
which is bounded by $n + \frac 12$, and has the bounded and continuous derivative:
\begin{align*}
  w_{n}'(x) = 1(x \le n) + (n+1 - x) 1(n < x \le n+1) \in [0,1].
\end{align*}
For each $n \ge 1$ and $k \in \sr{K}$, we choose test function $f(\vc{x}) = w_{n}(z_{k})$ for the Dynkin's formula \eq{Dynkin 1}, which means that $h(j) = 1$ for all $j \in \sr{J}$ in the formula \eq{A 1}. We then take the expectation of \eq{Dynkin 1} with $t=1$ under the stationary distribution $\nu$, which yields
\begin{align*}
  \dd{E}_{\nu}\left(\int_{0}^{1} w_{n}'(Z_{k}(s)) v_{k}(s) ds \right) + \sum_{\ell \in \sr{K}} r_{k,\ell} \dd{E}_{\nu} \left(\int_{0}^{1} w_{n}'(Z_{k}(s)) y_{\ell}(s) ds\right) = 0, \qquad k \in \sr{K}.
\end{align*}
Since $v_{k}(s)$ and $y_{\ell}(s)$ are bounded by constants and $\lim_{n  \to \infty} w_{n}'(x) = 1$, letting $n \to \infty$ in the above formula yields
\begin{align*}
  \dd{E}_{\nu}(V_{k}(1)) + \sum_{\ell \in \sr{K}} r_{k,\ell} \dd{E}_{\nu} (Y_{\ell}(1))= 0, \qquad k \in \sr{K}.
\end{align*}
This proves that $\dd{E}_{\nu}(Y_{k}(1)) = - \dd{E}_{\nu}([R^{-1} \vc{V}(1)]_{k}) > 0$ by \eq{stability 1}.\\
(b) If $\dd{P}_{\nu}(\vc{Z}(0) \in S_{k}) = 0$, then 
\begin{align*}
  \dd{E}_{\nu}\Big(\int_{0}^{1} 1(Z_{k}(u) = 0) du \Big) = \dd{P}_{\nu}(Z_{k}(0) = 0) = \dd{P}_{\nu}(\vc{Z}(0) \in S_{k}) = 0.
\end{align*}
Hence, $Y_{k}(1) = 0$ $a.s.$ $\dd{P}_{\nu}$. This contradicts (a), and therefore $\dd{P}_{\nu}(\vc{Z}(0) \in S_{k}) > 0$.

\subsection{Proof of \lem{upper 1}}
\label{sec:lem-upper 1}

(a) To prove \eq{mgf 2}, we only need to show that it is impossible to have that
\begin{align}
\label{eq:mgf 3}
  \limsup_{x \to \infty} \frac 1x \log \dd{P}_{\nu}(\br{\vc{c},\vc{Z}} > x) < - \sup\{\alpha \ge 0, \alpha \vc{c} \in \sr{D}\},
\end{align}
because the left-hand side of \eq{mgf 3} is bounded by the right-hand side by \eq{mgf 1}. Suppose that \eq{mgf 3} holds. Then, we can find $\alpha_{0} > \sup\{\alpha \ge 0, \alpha \vc{c} \in \sr{D}\}$ such that the left-hand side of \eq{mgf 3} is bounded by $-\alpha_{0}$. This $\alpha_{0}$ must satisfy that $\alpha_{0} \vc{c} \not\in \sr{D}$, and therefore $\varphi(\alpha_{0}\vc{c}) = \infty$. On the other hand, for $x \ge 0$,
\begin{align*}
  \int_{0}^{x} e^{\alpha_{0} u} d\dd{P}(\br{\vc{c},\vc{Z}} \le u) + e^{\alpha_{0} x} \dd{P}(\br{\vc{c},\vc{Z}} > x) = \dd{P}(\br{\vc{c},\vc{Z}} > 0) + \alpha_{0} \int_{0}^{x} e^{\alpha_{0} u} \dd{P}(\br{\vc{c},\vc{Z}} > u) du.
\end{align*}
Hence, $\varphi(\alpha_{0} \vc{c}) = \infty$ implies that the left side of this equation diverges as $x \to \infty$, and therefore
\begin{align*}
  \int_{0}^{\infty} e^{\alpha_{0} u} \dd{P}(\br{\vc{c},\vc{Z}} > u) du = \infty,
\end{align*}
which implies that $\limsup_{x \to \infty} \frac 1x \log \dd{P}_{\nu}(\br{\vc{c},\vc{Z}} > x) < -\alpha_{0}$ is impossible. Thus, \eq{mgf 3} is impossible.\\
(b) Fix a direction vector $\vc{c} \in U_{+}^{d}$. Since $B_{0}$ is compact, we can find some $\delta > 0$ for this $\vc{c}$ such that $\vc{Z} \in x \vc{c} + B_{0}$ implies that $\br{\vc{\theta}, \vc{Z}} \ge x \br{\vc{\theta}, \vc{c}} - \max_{\|\vc{y}\| \le \delta} \br{\vc{\theta},\vc{y}}$ for $\vc{\theta} \in \dd{R}^{d}$. Hence, 
\begin{align*}
  e^{x \br{\vc{\theta},\vc{c}}} \dd{P}(\vc{Z} \in x \vc{c} + B_{0}) & \le e^{x\br{\vc{\theta},\vc{c}}} \dd{P}\left(\br{\vc{\theta}, \vc{Z}} \ge x \br{\vc{\theta}, \vc{c}} - \max_{\|\vc{y}\| \le \delta} \br{\vc{\theta},\vc{y}}\right)\\
  &  \le e^{\max_{\|\vc{y}\| \le \delta} \br{\vc{\theta},\vc{y}}} \dd{E}(e^{x \br{\vc{\theta},\vc{Z}}}),
\end{align*}
and therefore, for $\vc{\theta} \in \sr{D}$,
\begin{align*}
  \limsup_{x \to \infty} \frac 1x \log \dd{P}(\vc{Z} \in x \vc{c} + B_{0}) \le - \br{\vc{\theta},\vc{c}}
\end{align*}
Taking the infimum of this inequality for all $\vc{\theta} \in \sr{D}$ yields \eq{upper bound 1}.

\subsection{Proof of \lem{fixed 1}}
\label{sec:fixed 1}

We first note that (c) is immediate from \lem{domain 2d}. Using this fact, we prove (a). The case for $d=2$ is again immediate from (c).  Thus, we prove (a) for $d \ge 3$. For this, we choose $A = \{\ell\}$ or $A = \{\ell'\}$ for $\ell, \ell'\in \sr{K}$ such that $\ell \ne \ell'$. For $\vc{\theta} \in \dd{R}^{d}$, let $\vc{\theta}^{\ell,\ell'}$ be the vector whose $\ell, \ell'$ entries are $\theta_{\ell}, \theta_{\ell'}$ while all the other entries vanish. Then, for $k' \ne \ell, \ell'$,
\begin{align}
\label{eq:nonpositive 1}
   \gamma_{k'}(\vc{\theta}^{\ell,\ell'}) = - p_{k',\ell} \theta_{\ell} - p_{k',\ell'} \theta_{\ell'} \le 0, \qquad \vc{\theta}^{\ell,\ell'} \ge 0.
\end{align}
 Assume that $\varphi_{k}(\vc{\theta}^{\ell,\ell'}) < \infty$, equivalently, $\psi_{k}(\vc{\theta}^{\ell,\ell'}) < \infty$, for $k = \ell, \ell'$, then, by (b) of \lem{marginal 2}, \eq{BAR 2} for $A = \{\ell\}$ can be written as
\begin{align*}
  (-\gamma(\vc{\theta}^{\ell,\ell'})) \psi(\vc{\theta}^{\ell,\ell'}) & + \sum_{k' \ne \ell,\ell'} (-\gamma_{k'}(\vc{\theta}^{\ell,\ell'})) \psi_{k'}(\vc{\theta}^{\ell,\ell'}) \nonumber \\
& + (-\gamma_{\ell'}(\vc{\theta}^{\ell,\ell'})) \psi_{\ell'}(\vc{\theta}^{\ell,\ell'}) \le \gamma_{\ell}(\vc{\theta}^{\ell,\ell'}) \psi_{\ell}(\vc{\theta}^{\ell,\ell'}).
\end{align*}
Similarly, we have, for $A = \{\ell'\}$,
\begin{align*}
  (-\gamma(\vc{\theta}^{\ell,\ell'})) \psi(\vc{\theta}^{\ell,\ell'}) & + \sum_{k' \ne \ell,\ell'} (-\gamma_{k'}(\vc{\theta}^{\ell,\ell'})) \psi_{k'}(\vc{\theta}^{\ell,\ell'}) \nonumber \\
& + (-\gamma_{\ell}(\vc{\theta}^{\ell,\ell'})) \psi_{\ell}(\vc{\theta}^{\ell,\ell'}) \le \gamma_{\ell'}(\vc{\theta}^{\ell,\ell'}) \psi_{\ell'}(\vc{\theta}^{\ell,\ell'}).
\end{align*}
Because of \eq{nonpositive 1} and the fact that the rays $H_{\ell}, H_{\ell'}$ on $\dd{R}^{d}_{\ell,\ell'}$ are in $\{\vc{\theta} \in \dd{R}^{d}_{\ell,\ell'}; \vc{\theta} \ge \vc{0}\}$ or $\{\vc{\theta} \in \dd{R}^{d}_{\ell,\ell'}; \vc{\theta} \le \vc{0}\}$ (see (d) of \lem{convex 1} for the definition of a ray), these two equations can be considered as the two dimensional fixed point equation with $\gamma(\vc{\theta}^{\ell,\ell'}), \gamma_{\ell}(\vc{\theta}^{\ell,\ell'}), \gamma_{\ell}(\vc{\theta}^{\ell,\ell'})$ for $\psi(\vc{\theta}^{\ell,\ell'})$. Hence, by (c) of this lemma, we can see that, for $D^{(0)}_{k} = \overleftarrow{\vc{0}}_{\sr{K} \setminus \{k\}}$ for $k \in \sr{K}$, $D^{(1)}_{k'} \ne \overleftarrow{\vc{0}}_{\sr{K} \setminus \{k'\}}$ for $k' = \ell, \ell'$. This proves (a).

\noindent (b) By (a), there is a nontrivial $D^{(0)}_{k}$ for $k \in \sr{K}$ satisfying $D^{(0)}_{k} \subset D^{(1)}_{k} \subset \widetilde{\sr{D}}_{k}$. Hence, we inductively see that $D^{(n-1)}_{k} \subset D^{(n)}_{k} \subset \widetilde{\sr{D}}_{k}$ for $n \ge 1$ and $k \in \sr{K}$. This implies that the limit $D^{(\infty)}_{k} \equiv \lim_{n \to \infty} D^{(n)}_{k} \subset \widetilde{\sr{D}}_{k}$ exists and $\{D^{(\infty)}_{k}; k \in \sr{K}\}$ is the solution \eq{fixed 1}. We now collect all the solutions of \eq{fixed 1}, and index them as $\{D_{k}(\beta); \in \sr{K}\}$ by $\beta \in \exists \sr{B}$. Define, 
\begin{align*}
  D^{(0)}_{k}(\sr{B}) = \cup_{\beta \in \sr{B}} D_{k}(\beta), \qquad k \in \sr{K}, \beta \in \sr{B},
\end{align*}
then $D^{(0)}_{k}(\sr{B}) \subset \sr{D}_{k}$ for $\forall k\in \sr{K}$. Since each $\{D_{k}(\beta); k \in \sr{K}\}$ is the solution of \eq{fixed 1} for each $\beta \in \sr{B}$, $D^{(1)}_{k}(\sr{B})$ by the iteration of \eq{fixed 2} must includes $D^{(0)}_{k}(\sr{B})$ for each $k \in \sr{K}$. Thus, we have the limit of $D^{(n)}_{k}(\sr{B})$ as $n \to \infty$. We denote this limit by $D^{(\max)}_{k}$, then it must be maximal by its construction. This proves (b).

\subsection{Proof of \lem{marginal 2}}
\label{sec:marginal 2}

(a) By (b) of \lem{marginal 1}, $\varphi(\vc{\theta}) < \infty$ is equivalent to $\psi(\vc{\theta}) < \infty$, which implies that $\psi_{k}(\vc{\theta}) < \infty$ for $k \in \sr{K}$. To derive \eq{BAR 1}, we truncate the test function $f_{\vc{\theta}}$ using $w_{n}$ of \eq{wn 1} for each $n \ge 1$. Namely, we put $f(\vc{x}) = e^{w_{n}(\br{\vc{\theta},\vc{z}})} h(j)$ for $h \in F_{+}(\sr{J})$, and apply this test function $f$ to \eq{Dynkin 1} with $t=1$. Since $e^{w_{n}(\br{\vc{\theta},\vc{z}})}$ is bounded in $\vc{z} \in \dd{R}^{d}$, we have, for each $n \ge 1$ and random vector $(\vc{Z},J)$ subject to $\nu$,
\begin{align}
\label{eq:n-BAR 1}
   \sum_{k \in \sr{K}} \dd{E}_{\nu} \Bigg( e^{w_{n}(\br{\vc{\theta},\vc{Z}}} \Big(& w_{n}'(\br{\vc{\theta},\vc{Z}}) \theta_{k} v_{k}(J) h(J) + \sum_{j \in \sr{J}} q_{J,j} h(j)\Big)\Bigg) \nonumber\\
  & + \sum_{\ell,k \in \sr{K}} \gamma(\vc{\theta}) m_{\ell} \dd{E}_{\ell}\left(w_{n}'(\br{\vc{\theta},\vc{Z}}) e^{w_{n}(\br{\vc{\theta},\vc{Z}})} \theta_{k} r_{k,\ell} h(J) \right) = 0.
\end{align}
Similar to $K(\vc{\theta})$, define $m \times m$ matrix $K_{n}(\vc{\theta})$ as
\begin{align*}
  K_{n}(\vc{\theta}) = w_{n}'(\br{\vc{\theta},\vc{Z}}) \diag{\sum_{k = 1}^{d} \theta_{k} \vc{v}_{k}} + Q,
\end{align*}
then it has a unique Perron Frobenius eigenvalue and eigenvector, which are denoted by $\gamma^{(n)}(\vc{\theta})$ and $\vc{h}^{(n)}_{\vc{\theta}}$, respectively, because $K_{n}(\vc{\theta})$ is an irreducible $\sr{M}$-matrix. Hence, similarly to the derivation of \eq{A 3} from \eq{A 2}, \eq{n-BAR 1} is written as
\begin{align}
\label{eq:n-BAR 2}
   \gamma^{(n)}(\vc{\theta}) \dd{E}_{\nu} \Bigg( e^{w_{n}(\br{\vc{\theta},\vc{Z}}} h^{(n)}_{\vc{\theta}}(J) \Bigg)  + \sum_{\ell \in \sr{K}} \gamma_{\ell}(\vc{\theta}) m_{\ell} \dd{E}_{\ell}\left(w_{n}'(\br{\vc{\theta},\vc{Z}}) e^{w_{n}(\br{\vc{\theta},\vc{Z}}} h^{(n)}_{\vc{\theta}}(J) \right) = 0.
\end{align}
Since $w_{n}'(x)$ converges to $1$ as $n \to \infty$, $K_{n}(\vc{\theta})$ converges to $K(\vc{\theta})$ in entry-wise as $n \to \infty$. Then, one can see that $\gamma^{(n)}(\vc{\theta})$ and $\vc{h}^{(n)}_{\vc{\theta}}$ converge to $\gamma(\vc{\theta})$ and $\vc{h}_{\vc{\theta}}$, respectively, as $n \to \infty$ (e.g., see Theorem 3.1 of \cite{Meye2015}). Hence, the bounded convergence theorem and \eq{n-BAR 2} yield \eq{BAR 1} because $\psi(\vc{\theta}) < \infty$ and $\psi_{k}(\vc{\theta}) < \infty$ for $\forall k \in \sr{K}$.

\noindent (b) We start with \eq{n-BAR 2}, and rearrange it as
\begin{align}
\label{eq:n-BAR 3}
   (- \gamma^{(n)}(\vc{\theta})) \dd{E}_{\nu} \left( e^{w_{n}(\br{\vc{\theta},\vc{Z}})} h^{(n)}_{\vc{\theta}}(J) \right) & + \sum_{\ell \in \sr{K} \setminus A} (-\gamma_{\ell}(\vc{\theta})) m_{\ell} \dd{E}_{\ell}\left(w_{n}'(\br{\vc{\theta},\vc{Z}}) e^{w_{n}(\br{\vc{\theta},\vc{Z}})} h^{(n)}_{\vc{\theta}}(J) \right) \nonumber\\
   & = \sum_{\ell \in A} \gamma_{\ell}(\vc{\theta}) m_{\ell} \dd{E}_{\ell}\left(w_{n}'(\br{\vc{\theta},\vc{Z}}) e^{w_{n}(\br{\vc{\theta},\vc{Z}}} h^{(n)}_{\vc{\theta}}(J) \right).
\end{align}
The left-hand side of this equation is the sum of nonnegative terms with nonnegative coefficients, $-\gamma^{(n)}(\vc{\theta}), - \gamma_{\ell}(\vc{\theta})$ for sufficiently large $n$ because we can find $n_{0} \ge 1$ such that $\gamma^{(n)}(\vc{\theta}) < 0$ for all $n \ge n_{0}$ due to $\gamma(\vc{\theta}) < 0$ by the assumption. On the other hand, its right-hand side is uniformly bounded by $\psi_{k}(\vc{\theta})<\infty$ for $k \in A$ because $0 \le w_{n}'(x) \le 1$. Hence, we have \eq{BAR 2} because letting $n \to \infty$ in \eq{n-BAR 3} and applying Fatou's lemma yield
\begin{align*}
 & \dd{E}_{\nu} \left( e^{\br{\vc{\theta},\vc{Z}}} h_{\vc{\theta}}(J) \right) = \dd{E}_{\nu} \left( \liminf_{n \to \infty} e^{w_{n}(\br{\vc{\theta},\vc{Z}})} h^{(n)}_{\vc{\theta}}(J) \right) \le \liminf_{n \to \infty} \dd{E}_{\nu} \left( e^{w_{n}(\br{\vc{\theta},\vc{Z}})} h^{(n)}_{\vc{\theta}}(J) \right),\\
 & \dd{E}_{\ell}\left(e^{\br{\vc{\theta},\vc{Z}}} h_{\vc{\theta}}(J) \right) \le \dd{E}_{\ell}\left( \liminf_{n \to \infty} w_{n}'(\br{\vc{\theta},\vc{Z}}) e^{w_{n}(\br{\vc{\theta},\vc{Z}})} h^{(n)}_{\vc{\theta}}(J) \right)\\
 & \hspace{18ex} \le \liminf_{n \to \infty} \dd{E}_{\ell}\left(w_{n}'(\br{\vc{\theta},\vc{Z}}) e^{w_{n}(\br{\vc{\theta},\vc{Z}})} h^{(n)}_{\vc{\theta}}(J) \right).
\end{align*}
Obviously, \eq{BAR 2} show that $\varphi(\vc{\theta})$ and $\varphi_{k}(\vc{\theta})$ are finite under the assumptions in (b), which further implies \eq{BAR 1} by (a). Thus, (b) of this lemma is proved.

\subsection{Proof of \lem{ZA limit 1}}
\label{sec:ZA limit 1}

For $A \subset \sr{K}$, define the fluid scaling limits of $\vc{V}^{A}(\cdot)$, $\vc{Z}^{A}(\cdot)$ and $\vc{Y}^{A}(\cdot)$ as
\begin{align*}
  \ol{\vc{V}}^{A}(t) = \lim_{n \to \infty} \frac 1n \vc{V}^{A}(nt), \quad \ol{\vc{Z}}^{A}(t) = \lim_{n \to \infty} \frac 1n \vc{Z}^{A}(nt), \ol{\vc{Y}}^{A}(t) = \lim_{n \to \infty} \frac 1n \vc{Y}^{A}(nt), \quad t \ge 0,
\end{align*}
as long as they exists. Since $J(t)$ is a finite state Markov chain having the stationary distribution, the law of large numbers yields that 
\begin{align}
\label{eq:drift 1}
  \ol{V}^{A}_{k}(t) = \lim_{n \to \infty} \frac 1n \int_{0}^{nt} v_{k}(J(s)) ds = t \ol{v}_{k}, \qquad k \in A, \qquad w.p.1.
\end{align}
Similar to the reflection mapping for \eq{Z 2} and \eq{ZY 1}, it follows from \eq{ZA 0}, the fluid limit of $\ol{\vc{Z}}^{A}(\cdot)$ of $\vc{Z}^{A}(\cdot)$ is characterized by
\begin{align}
\label{eq:ZA 1}
 & \ol{\vc{Z}}^{(A)}_{A}(t) = \ol{\vc{Z}}^{(A)}_{A}(0) + t \ol{\vc{v}}_{A} + R_{A} \ol{\vc{Y}}^{(A)}_{A}(t) \ge \vc{0}_{A}, \qquad t \ge 0,\\
\label{eq:ZA 2}
 & \ol{\vc{Z}}^{(A)}_{A^{c}}(t) = \ol{\vc{Z}}^{(A)}_{A^{c}}(0) + t \ol{\vc{v}}_{A^{c}} - P^{\rs{t}}_{A^{c},A} \ol{\vc{Y}}^{(A)}_{A}(t), \qquad t \ge 0,\\
\label{eq:ZA 3}
 & \int_{0}^{t} \ol{Z}^{(A)}_{k}(s) \ol{Y}^{(A)}_{k}(ds) = 0, \quad \ol{Y}^{(A)}_{k}(0) = 0, \quad \mbox{$\ol{Y}^{(A)}_{k}(t) \uparrow$ in $t$}, \quad k \in A, t \ge 0.
\end{align}
We first note that \eq{ZA limit 1} is obtained if we show that $\lim_{t \to \infty} \ol{\vc{Z}}^{(A)}_{k}(t) = -\infty$ w.p.1 for some $k \in A^{c}$. It is immediate from \eq{ZA 2} that this holds if $\ol{v}_{k} < 0$ for some $k \in A^{c}$. In particular, this is the case for $A = \emptyset$, that is, $A^{c} =\sr{K}$ by \eq{stability 1}. Thus, we can assume that $A \ne \emptyset, \sr{K}$. That is, both of $A$ and $A^{c}$ are not empty. From \eq{ZA 1}, we have
\begin{align*}
  - \ol{\vc{Y}}^{(A)}_{A}(t) \le R_{A}^{-1} (t\ol{\vc{v}}_{A} + \ol{\vc{Z}}^{(A)}_{A}(0)).
\end{align*}
Substituting this into \eq{ZA 2}, we have
\begin{align}
\label{eq:ZA 4}
  \ol{\vc{Z}}^{(A)}_{A^{c}}(t) = \ol{\vc{Z}}^{(A)}_{A^{c}}(0) + t (\ol{\vc{v}}_{A^{c}} + P^{\rs{t}}_{A^{c},A} R_{A}^{-1} \ol{\vc{v}}_{A}) + R_{A}^{-1} \ol{\vc{Z}}^{(A)}_{A}(0).
\end{align}
Since $R, R_{A}, R_{A^{c}}$ are invertible, we have, by Schur's formula (e.g., see 0.7.3.1 of \cite{HornJohn2013} and page 245 of \cite{ChenZhan2000b}),
\begin{align*}
  R^{-1} = \left(\begin{array}{cc} (R_{A} - R_{A,A^{c}} R_{A^{c}}^{-1} R_{A^{c},A})^{-1} & -(R_{A} - R_{A,A^{c}} R_{A^{c}}^{-1} R_{A^{c},A})^{-1} R_{A,A^{c}} R_{A^{c}}^{-1} \\
  -(R_{A^{c}} - R_{A^{c},A} R_{A}^{-1} R_{A,A^{c}})^{-1} R_{A^{c},A} R_{A}^{-1} & (R_{A^{c}} - R_{A^{c},A} R_{A}^{-1} R_{A,A^{c}})^{-1} \end{array}\right).
\end{align*}
Since $R_{A^{c},A} = - P^{\rs{t}}_{A^{c},A}$, it follows from the $A_{c}$- column vector of $R^{-1} \ol{\vc{v}} < \vc{0}$ that
\begin{align*}
  (R_{A^{c}} + P^{\rs{t}}_{A^{c},A} R_{A}^{-1} R_{A,A^{c}})^{-1} (P^{\rs{t}}_{A^{c},A} R_{A}^{-1} \ol{\vc{v}}_{A} + \ol{\vc{v}}_{A^{c}}) < \vc{0}_{A^{c}}.
\end{align*}
Since $(R_{A^{c}} + P^{\rs{t}}_{A^{c},A} R_{A}^{-1} R_{A,A^{c}})^{-1}$ is a nonnegative matrix, there is at least one $k \in A_{c}$ such that $[P^{\rs{t}}_{A^{c},A} R_{A}^{-1} \ol{\vc{v}}_{A}]_{k} + \ol{\vc{v}}_{k} < 0$. Hence, \eq{ZA 4} implies that $\ol{\vc{Z}}^{(A)}_{k}(t) \to -\infty$ for this $k$.

\subsection{Proof of \lem{reachable 1}}
\label{sec:reachable 1}

From the statibility condition \eq{stability 2}, there exists an $i \in \sr{J}$ such that
\begin{align*}
  \lambda_{k}(i) + \sum_{\ell \in \sr{K}} \alpha_{\ell}(i) p_{\ell,k} = \alpha_{k}(i) < \mu_{k}(i).
\end{align*}
Since $\alpha_{k}(i)$ is the maximal in-flow rate at station $k$ under the background state $i$, $Z_{k}(t)$ decreases when $J(t) = i$. The sojourn time at state $i$ has the exponential distribution, so can be arbitrarily long. Hence, $Z_{k}(t)$ eventually hits $0$ with positive probability. This proves \eq{reachable 1} if $\vc{x} = (\vc{z},i)$ for any $\vc{z} \in S$. If $\vc{x} \not= (\vc{z},i)$, then $\vc{X}(t)$ arrives at $\vc{x}' = (\vc{z}',i)$ for some $\vc{z}' \in S$ in any finite time with positive probability by the irreducibility of the background Markov chain $J(\cdot)$. This proves (a).\\
To prove (b), assume that $\nu(S_{k} \times \sr{J}) = 0$ for some $k$. Since any sample path of $\vc{X}(t)$ starting from any $\vc{x}$ in $S$ hits $S_{k} \times \sr{J}$ with positive probability, $\vc{X}(t)$ must be transient, which contradicts that the stationary distribution exists.

\subsection{Proof of \lem{W 1}}
\label{sec:W 1}

On the time interval $(\tau^{\ex}_{A}(1), \tau^{\re}_{A}(1)]$, the sample path $\vc{Z}(t)$ is identical with $\vc{Z}^{(A^{c})}(t)$ because they are away from the boundary $S_{A}$. Hence,
\begin{align}
\label{eq:W 2}
  W_{A}(\tau_{x}) = \dd{E}_{\nu^{\ex}_{A}} \left(\left. \int_{\tau_{x}}^{\tau^{\re}_{A}} 1(\vc{Z}^{(A^{c})}(u) \in C(x)) du \right|\sr{F}_{\tau_{x}} \right).
\end{align}
For $t > 0$, let
\begin{align*}
  \sigma_{x}(t) = \inf\{u \ge t + \tau_{x}; \vc{Z}^{(A^{c})}(u) \in C(x)\},
\end{align*}
where $\sigma_{x}(t)$ may be larger than $\tau^{\re}_{A}(1)$. Since $Z^{(A^{c})}_{k}(t) \to -\infty$ almost surely for some $k \in A^{c}$ as $t \to \infty$ by \lem{ZA limit 1}, $\vc{Z}^{(A^{c})}(t)$ eventually leaves $S \equiv \dd{R}_{+}^{d}$ as $t \to \infty$, we have that $\int_{\tau_{x}}^{\infty} 1(\vc{Z}^{(A^{c})}(u) \in C(x)) du < \infty$ w.p.1. Hence, on $\{\tau_{x} < \infty\}$, $t \le \int_{\tau_{x}}^{\infty} 1(\vc{Z}^{(A^{c})}(u) \in C(x)) du$ implies that $t + \tau_{x} \le \sigma_{x}(t) < \infty$. Thus, \eq{W 2} yields
\begin{align}
\label{eq:W 3}
  W_{A}(\tau_{x}) & = \dd{E}_{\nu^{\ex}_{A}} \left(\left. \int_{0}^{\infty} 1\left(\int_{\tau_{x}}^{\tau^{\re}_{A}} 1(\vc{Z}^{(A^{c})}(u) \in C(x)) du > t\right) dt \right|\sr{F}_{\tau_{x}} \right) \nonumber\\
 & \le \dd{E}_{\nu^{\ex}_{A}} \left(\left. \int_{0}^{\infty} 1\left(\int_{\tau_{x}}^{\infty} 1(\vc{Z}^{(A^{c})}(u) \in C(x)) du > t\right) dt \right|\sr{F}_{\tau_{x}} \right) \nonumber\\
  & \le \dd{E}_{\nu^{\ex}_{A}} \left(\left. \int_{0}^{\infty} 1\left(\sigma_{x}(t) < \infty \right) dt \right|\sr{F}_{\tau_{x}} \right) \nonumber\\
  & \le \int_{0}^{\infty} \dd{P}_{\nu^{\ex}_{A}} \left(\left. \sigma_{x}(t) < \infty \right|\sr{F}_{\tau_{x}} \right) dt.
\end{align}
We next apply change of measure of $\dd{P}_{\nu^{\ex}_{A}}$ by $E^{\vc{\theta}}_{A^{c}}(t)$ of \eq{EM 2} for $\vc{\theta} \in \Gamma^{-}_{A^{c}}$, then
\begin{align*}
 & \dd{P}_{\nu^{\ex}_{A}} \left(\left. \sigma_{x}(t) < \infty \right|\sr{F}_{\tau_{x}} \right) \nonumber\\
 & = \widetilde{E}_{\nu^{\ex}_{A}}\left( \frac {e^{\br{\vc{\theta},\vc{Z}^{(A^{c})}(\tau_{x})}} h_{\vc{\theta}}(J(\tau_{x}))} {e^{\br{\vc{\theta},\vc{Z}^{(A^{c})}(\sigma_{x}(t))}} h_{\vc{\theta}}(J(\sigma_{x}(t)))} 1\left(\sigma_{x}(t) < \infty \right) e^{\gamma(\vc{\theta}) (\sigma_{x}(t) - \tau_{x}) + \sum_{k \in A^{c}} \gamma_{k}(\vc{\theta}) Y^{(A^{c})}_{k}((\tau_{x},\sigma_{x}(t)])}\right).
\end{align*}
Since $h_{\vc{\theta}}(i)$ is bounded and bounded away from $0$ for $i \in \sr{J}$, $h_{\vc{\theta}}(J(\tau_{x}))/h_{\vc{\theta}}(J(\sigma_{x}(t)))$ is bounded. Since $e^{\br{\vc{\theta},\vc{Z}^{(A^{c})}(\tau_{x})}- \br{\vc{\theta},\vc{Z}^{(A^{c})}(\sigma_{x}(t))}}$ is bounded by \eq{C 1}. Hence, $t + \tau_{x} \le \sigma_{x}(t)$ and $\vc{\theta} \in \Gamma^{-}_{A}$ implies that the right-hand side of the above equation is bounded by $c_{0} e^{\gamma(\vc{\theta})t}$ for some positive constant $c_{0}$. Since $\gamma(\vc{\theta}) < 0$ for $\vc{\theta} \in \Gamma^{-}_{A}$, \eq{W 3} yields the upper bound of \eq{W x 1}. The lower bound is obvious because
\begin{align}
\label{eq:W 4}
   \lim_{x \to \infty} W_{\tau_{x}} = \dd{E}_{\nu} \left( \int_{0}^{\infty} 1(\vc{Z}^{(\emptyset)}(u) \in C(x)) du \right) > 0.
\end{align}

\section{Concluding remarks} \setnewcounter
\label{sec:concluding}

In this paper, we have shown that, for $d=2$, the tail asymptotic problems of the MMFN process can be solved essentially in the same way as the two-dimensional SRBM studied in \cite{DaiMiya2011,DaiMiya2013}. We extend this idea for $d \ge 3$, reducing the problem to solve the fixed point equation \eq{fixed 1} for $(d-1)$-dimensional sets. However, there remain many problems about the solution of this fixed point equation.
\begin{itemize}
\item [(i)] By \lem{fixed 1}, there is the maximal solution $\{D^{(\max)}_{k}; k \in \sr{K}\}$ of \eq{fixed 1}, but the uniqueness of the nontrivial solution of \eq{fixed 1} and $D^{(\max)} = \sr{D}$ are open problems.
\item [(ii)] What is concrete shape of $D^{(\max)}$ for the solution ? We guess that there must be an algorithm to draw its boundary face.
\item [(iii)] We only have upper bounds \eq{upper 2} for $d \ge 3$. Are they identical with the logarithmic decay rates\hspace{0.5mm}?
\item [(iv)] Can finer tail asymptotics such as the so-called exact tail asymptotic be obtained in the framework of \lem{marginal 2}\hspace{0.5mm}?
\end{itemize}

These are interesting but hard problems.

Another issue is about the approach based on change of measure. This work originally started with this approach. However, we found that it does not so much earn the tail asymptotic results in the present framework. Nevertheless, we believe the approach is promising to get finer results, although there are many problems to be overcome.

In this paper, we study a kind of simple Markov modulated queueing networks. We hope this study is the first step to analyze the asymptotic behaviors of Markov modulated stochastic networks for more general input processes such as renewal on-off sources and nonnegative jumps at the transition instants of the background process. Further generalization would be more general net flow processes such as Markov modulated L\'{e}vy processes with nonnegative jumps. 

\section*{Acknowledgements}

I am grateful to the referees for their helpful comments and suggestions on this paper. In particular, the proof of the stability is reconsidered, which was based on a fluid approximation and $\psi$-irreducibility in the original submission.

\def\cprime{$'$} \def\cprime{$'$} \def\cprime{$'$} \def\cprime{$'$}
  \def\cprime{$'$} \def\cprime{$'$} \def\cprime{$'$}

%\bibliography{../../../texmf/bib/dai20190702}

\end{document}